\title{The rectifiable rectangular peg problem}
\author{Tomohiro Asano \and Yuichi Ike}
\date{\today}
\begin{document}

\maketitle

\begin{abstract}
    We give an affirmative answer to the rectangular peg problem for a large class of continuous Jordan curves that contains all rectifiable curves and Stromquist's locally monotone curves. 
    Our proof is based on microlocal sheaf theory and inspired by recent work of Greene and Lobb.
\end{abstract}

\tableofcontents

\section{Introduction}

The square peg problem first posed by Toeplitz~\cite{Toeplitz} in 1911 asks the following: 
\begin{center}
	Does every continuous Jordan curve inscribe a square?
\end{center}
In this paper, we consider the so-called rectangular peg problem, which asks whether a Jordan curve inscribe a rectangle with prescribed
aspect ratio. 
For $\theta \in (0,\pi)$, a $\theta$-rectangle is a rectangle such that the angle between the diagonals is $\theta$. 
Note that a $\theta$-rectangle is a $(\pi-\theta)$-rectangle.

Recent progress have been made by Greene and Lobb in~\cite{GL21} where they answer positively to the question for smooth Jordan curves: every smooth Jordan curve inscribes a $\theta$-rectangle for any $\theta \in (0,\pi)$. 
More recently, in~\cite{GL24floerhomologysquarepegs}, they give a positive answer for rectangles and for every rectifiable (i.e., with finite length) Jordan curve satisfying some hypothesis on the diameter and the area of the bounded domain.
In this paper we remove this later hypothesis.
To the best of our knowledge, this is the first result that gives an affirmative answer to the square peg problem (i.e.,~$\theta=\pi/2$) for all the rectifiable Jordan curves.

\subsection{Our results}

Throughout this paper for a Jordan curve $c \colon S^1 \to \bR^2$, we write $\cur=c(S^1)$ for its image in $\bR^2$. 
Our main theorem is the following.

\begin{theorem}\label{thm:main_intro}
    Let $c \colon S^1 \to \bR^2$ be a Jordan curve.
    Moreover, assume that there exists a sequence of smooth Jordan curves $(c_n \colon S^1 \to \bR^2)_n$ such that 
    \begin{itemize}
        \item[(1)] $(c_n)_n$ converges to $c$ in the $C^0$-sense;
        \item[(2)] setting $f_n$ to be the primitive of $(c_n \circ e)^*\lambda$, the sequence $(f_n)_n$ converges to a continuous function $f$ on $\bR$ uniformly on every compact subset, where $e \colon \bR \to \bR/2\pi \bZ \simeq S^1$ is the quotient map.
    \end{itemize}
    Then $c$ inscribes a $\theta$-rectangle for any $\theta \in (0,\pi)$.
\end{theorem}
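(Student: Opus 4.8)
The plan is to reduce the problem to a Lagrangian‑intersection statement, to settle that statement for the smooth approximants $c_n$ via the sheaf‑theoretic incarnation of the Greene--Lobb argument, and then to transport the conclusion to the singular limit by a convergence argument at the level of sheaf quantizations, for which hypotheses (1)--(2) are exactly tailored. \textbf{Step 1 (rectangles as intersections).} Work in $\bR^4=\bR^2\times\bR^2$ with its split symplectic form, and associate to a Jordan curve with image $\gamma$ the embedded torus $\widehat\gamma$, the image of $\gamma\times\gamma$ under the midpoint--half‑difference coordinates $(z_1,z_2)\mapsto(w_+,w_-):=\bigl(\tfrac{z_1+z_2}{2},\tfrac{z_1-z_2}{2}\bigr)$, a linear conformal symplectomorphism of $\bR^4$. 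For $\theta\in(0,\pi)$ let $R_\theta$ be the symplectomorphism of $\bR^4$ rotating the second $\bR^2$‑factor by $\theta$. Then an ordered quadruple of points of $\gamma$ bounding a possibly degenerate $\theta$‑rectangle is the same datum as a point of $\widehat\gamma\cap R_\theta(\widehat\gamma)$, and the degenerate quadruples correspond precisely to the points of the circle $\{w_-=0\}\cap\widehat\gamma$, which $R_\theta$ fixes pointwise and which therefore always lies in the intersection. It thus suffices to produce a point of $\widehat\gamma\cap R_\theta(\widehat\gamma)$ with $w_-\neq0$.

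\textbf{Step 2 (the smooth approximants).} For each $n$ the curve $c_n$ is smooth, so the torus $\widehat{\gamma_n}$ (with $\gamma_n:=c_n(S^1)$) admits a sheaf quantization $F_n$, and the Greene--Lobb argument, run microlocally for the pair $\bigl(F_n,R_{\theta*}F_n\bigr)$, produces a nonzero morphism class detecting a point $p_n\in\widehat{\gamma_n}\cap R_\theta(\widehat{\gamma_n})$ with $w_-(p_n)\neq0$; equivalently $c_n$ inscribes a genuine $\theta$‑rectangle $\Box_n$. By compactness of $S^1$, after passing to a subsequence the four parameters underlying $\Box_n$ converge, and by hypothesis (1) the corresponding points of $\gamma_n$ converge to four points of $\cur$; since ``bounds a $\theta$‑rectangle'' is a closed condition, these four limit points bound a $\theta$‑rectangle inscribed in $c$. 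The only thing that can go wrong is that this limiting rectangle is degenerate, i.e.\ $w_-(p_n)\to0$.

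\textbf{Step 3 (ruling out degeneration --- the crux).} To exclude collapse I would pass to the limit at the level of sheaves. The sheaf quantization $F_n$ is controlled by the phase $f_n$, the primitive of $(c_n\circ e)^*\lambda$, and hypothesis (2) --- uniform convergence $f_n\to f$ on every compact --- makes $(F_n)_n$ a Cauchy sequence for the interleaving (convolution) distance, hence convergent to a sheaf $F$ that plays the role of a sheaf quantization of the singular Lagrangian $\widehat\gamma$; the hypothesis that $\cur$ has measure zero ensures that the microsupport of $F$ lies over a measure‑zero set, so that the microlocal estimates survive and the microstalks of $F$ are computed from those of the $F_n$. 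Running the Greene--Lobb argument for $\bigl(F,R_{\theta*}F\bigr)$ then yields a nonzero class in a degree incompatible with the intersection being concentrated on the degenerate circle $\{w_-=0\}$; the corresponding point of $\widehat\gamma\cap R_\theta(\widehat\gamma)$ has $w_-\neq0$, and --- unwinding Step 1 --- its four underlying points of $\cur$ are the vertices of a $\theta$‑rectangle inscribed in $c$. The quantitative heart, which I expect to be the main obstacle, is a uniform, $n$‑independent lower bound $|w_-(p_n)|\ge\epsilon_0>0$, equivalently a lower bound on the size of $\Box_n$ depending only on $\theta$ and on the enclosed area, which we have normalised to $\pi$: this should come from a displacement‑energy estimate asserting that in a neighbourhood of the diagonal circle whose size is governed solely by that area, $\widehat{\gamma_n}$ and $R_\theta(\widehat{\gamma_n})$ are displaceable by a Hamiltonian isotopy, so the essential intersection detected by the sheaf cohomology lies outside it and hence survives the passage to the limit. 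With this in hand, and since $\theta\in(0,\pi)$ was arbitrary, the theorem follows.
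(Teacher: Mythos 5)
Your scaffolding (rectangles as intersection points of the torus with its $R_\theta$-image, sheaf quantizations of the smooth approximants, a limit sheaf controlled by hypotheses (1)--(2)) matches the paper's, but the crux of your argument — Step 3 — is a genuine gap rather than a proof. You reduce everything to an expected, $n$-independent lower bound $|w_-(p_n)|\ge \epsilon_0(\theta,\text{area})>0$ on the size of the rectangles inscribed in the approximants, to be extracted from a displacement-energy estimate for a neighbourhood of the diagonal circle ``whose size is governed solely by the enclosed area.'' No such bound is established, and it is not available at this level of generality: the size of a displaceable neighbourhood of the diagonal part of $\widehat{\gamma_n}$ depends on the geometry of $\gamma_n$ near itself, which is not controlled by the area $\pi$ alone when the $c_n$ merely $C^0$-approximate a wild curve. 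This is essentially the mechanism of Greene--Lobb's rectifiable-curve theorem, and it is precisely why their result carries an extra hypothesis relating diameter and area — the hypothesis the present theorem is designed to remove. Relatedly, your Step 2 strategy of extracting a convergent subsequence of rectangles $\Box_n$ and ruling out collapse is not how the paper argues at all; the paper never passes to a limit of rectangles.

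What the paper actually does is prove a sheaf-theoretic criterion (\cref{thm:main_sheaves}) and apply it directly to the limit object: non-degeneracy is ruled out by an \emph{action} (filtration) argument, not by a size bound or a degree obstruction. The class $\tau_{0,0}^{0,\theta}v$ is shown to be non-zero with spectral invariant $a_\theta$ lying strictly in $(0,\pi)$ (\cref{lem:colim_nonzero,lem:spectral_invariant}), while hypothesis (2) — uniform convergence of the primitives $f_n$ — is exactly what forces $T_a\CMS(F)\cap\CMS(F)=\varnothing$ for $a\in\bR\setminus\pi\bZ$, i.e.\ all degenerate (diagonal) intersections occur only at $t$-shifts in $\pi\bZ$; hence the microstalk detected at shift $a_\theta$ must come from an off-diagonal intersection, which is a genuine $\theta$-rectangle. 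You also misassign the roles of the hypotheses: the Cauchy property of $(F_{C_n})_n$ is \emph{not} derived from (2) but from $C^0$-convergence together with the area normalization, via the conformal-annulus/Hofer-energy estimate (\cref{prop:Jordan_Cauchy,lem:annulus_energy}); and the measure-zero assumption is not there to keep the microsupport over a null set, but to produce a compactly supported Hamiltonian (area-preserving) homeomorphism $\phi$ with $\phi(\cstd)=\cur$ (Sch\"onflies plus Oxtoby--Ulam), which identifies the limit sheaf with $\cK(\phi\times\phi)\Fstd$ so that \cref{thm:main_sheaves} and the comparison with the standard circle apply. Without the action-window mechanism (or some substitute), your proposal does not close the degeneration loophole.
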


A Jordan curve satisfying the conditions in \cref{thm:main_intro} might be said to \emph{admit a continuous Legendrian lift}.

One can show that every rectifiable Jordan curve satisfies the conditions in \cref{thm:main_intro}. 
See \cref{section:curves}. 
As a result, we get:

\begin{corollary}[{\cref{cor:rectifiable_peg}}]\label{cor:intro_rectifiable}
    Every rectifiable Jordan curve inscribes a $\theta$-rectangle for any $\theta \in (0,\pi)$.
\end{corollary}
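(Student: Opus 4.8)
The plan is to derive the statement from \cref{thm:main_intro}: given a rectifiable Jordan curve, one exhibits a parametrization and an approximating sequence meeting all of its hypotheses. Since the conclusion depends only on the image $\cur$, we are free to choose the parametrization at will. The two easy points first. If $c$ has finite length $L$, then $\cur$ is a Lipschitz image of an interval, so $\mathcal H^1(\cur)\le L<\infty$, whence $\cur$ has two-dimensional Lebesgue measure zero; and after a dilation — which preserves both rectifiability and the measure-zero property — we may normalize the open domain $\Omega$ bounded by $\cur$ to have area $\pi$. Thus the measure hypothesis and the area normalization of \cref{thm:main_intro} hold automatically, and it remains to produce the smooth curves $c_n$ satisfying (1) and (2).

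I would obtain both of these at once from the Riemann map. Let $\phi\colon D\to\Omega$ be a biholomorphism from the open unit disk $D$; by Carathéodory's theorem $\phi$ extends to a homeomorphism $\overline\phi\colon\overline D\to\overline\Omega$, and we take $c:=\overline\phi|_{\partial D}$ (so $\cur=\partial\Omega$). For $r<1$ put $c_r(e^{i\theta}):=\phi(r e^{i\theta})$; each $c_r$ is a real-analytic, in particular smooth, Jordan curve, and $c_r\to c$ uniformly as $r\to1^-$ by uniform continuity of $\overline\phi$, which gives (1) along $r=1-1/n$. For (2) one uses that $\partial\Omega$ is rectifiable: by the classical theory of conformal maps onto domains with rectifiable boundary (going back to F.\ and M.\ Riesz) this forces $\phi'\in H^1(D)$, so $\phi$ extends absolutely continuously to $\partial D$, the boundary values $\phi'(e^{i\theta})$ exist for a.e.\ $\theta$ with $\tfrac{d}{d\theta}c(e^{i\theta})=i e^{i\theta}\phi'(e^{i\theta})\in L^1(\partial D)$, and the radial dilates satisfy $\phi'(r\,\cdot\,)\to\phi'$ in $L^1(\partial D)$. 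Hence $\tfrac{d}{d\theta}c_r\to\tfrac{d}{d\theta}c$ in $L^1(\partial D)$ while $c_r\to c$ uniformly; since $\lambda$ is a smooth $1$-form on $\bR^2$ and all these curves stay in a fixed compact set, it follows that $(c_r\circ e)^*\lambda\to(c\circ e)^*\lambda$ in $L^1_{\mathrm{loc}}(\bR)$, and therefore the primitives $f_r(t)=\int_0^t(c_r\circ e)^*\lambda$ converge, uniformly on compact subsets of $\bR$, to the (absolutely) continuous function $f(t):=\int_0^t(c\circ e)^*\lambda$. This is precisely (2), so \cref{thm:main_intro} applies and yields the inscribed $\theta$-rectangle.

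The only genuinely nontrivial inputs are the complex-analytic ones — Carathéodory extension, the implication "rectifiable boundary $\Rightarrow\phi'\in H^1$", and $L^1$-convergence of radial dilates in $H^1$ — and this is exactly where finiteness of the length is used: without it $\phi'$ need not lie in $H^1$, the boundary parametrization need not be absolutely continuous, and the dilates need not converge in $L^1$, so the $f_r$ could fail to converge. Beyond invoking these, I expect the remaining work to be the routine but slightly fussy last implication, namely checking that $c_r\to c$ in $C^0$ together with $\tfrac{d}{d\theta}c_r\to\tfrac{d}{d\theta}c$ in $L^1$ upgrades to $L^1_{\mathrm{loc}}$-convergence of the pulled-back forms: writing $\lambda = a\,dx+b\,dy$ with $a,b$ smooth, one splits $a(c_r)\dot c_r-a(c)\dot c$ as $a(c_r)(\dot c_r-\dot c)+(a(c_r)-a(c))\dot c$ and estimates each term using that $a$ and $b$ are bounded on the relevant compact set. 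As a more hands-on alternative avoiding complex analysis, one could instead mollify a Lipschitz constant-speed parametrization of $c$ — the primitives then converge by the same estimate — but one must separately ensure the mollified curves can be taken embedded, which is the delicate point of that route; the conformal construction sidesteps it since $\phi$ is injective on $D$.
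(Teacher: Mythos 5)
Your proposal is correct and takes essentially the same approach as the paper: approximate $\cur$ by the real-analytic curves $\overline{\varphi}|_{\partial \bD_{1-1/n}}$ coming from the Riemann map extended by Carath\'eodory, and use the classical theory of conformal maps onto domains with rectifiable boundary to verify condition (2) of \cref{thm:main_intro}. The only cosmetic difference is in packaging: the paper cites the Riesz--Privalov theorem together with the lemmas behind Green's theorem for rectifiable curves, whereas you verify the convergence of primitives directly from $\varphi'\in H^1$ and the $L^1$-convergence of its radial dilates, which is the same underlying input.
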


There is another large class called locally monotone (see \cref{def:locally_monotone} for the definition).
We prove a locally monotone curve also satisfies the conditions in  \cref{thm:main_intro}, which implies the following:

\begin{corollary}[{\cref{cor:locallymonotone_peg}}]\label{cor:intro_locallymonotone}
    Every locally monotone curve inscribes a $\theta$-rectangle for any $\theta \in (0,\pi)$.
\end{corollary}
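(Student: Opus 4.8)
The plan is to deduce \cref{cor:intro_locallymonotone} from \cref{thm:main_intro} by verifying that every locally monotone Jordan curve $c$ satisfies the hypotheses of that theorem. Two of these are easy. The enclosed area is normalized to $\pi$ by the harmless rescaling already noted above (rescaling preserves local monotonicity and the inscription property). The vanishing of the measure of $\cur$ is immediate from \cref{def:locally_monotone}: every point of $\cur$ has a neighborhood in which, in suitable rotated affine coordinates $(u,v)$, the curve is the graph $\{v=\phi(u)\}$ of a continuous function over a compact interval, and such a graph is Lebesgue-null in $\bR^2$ (Fubini in the $v$-direction); since $S^1$ is compact, finitely many such neighborhoods cover $\cur$, so $\lvert\cur\rvert=0$.

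The substantive point is the existence of a sequence $(c_n)$ of smooth Jordan curves satisfying conditions (1) and (2) of \cref{thm:main_intro}. I would first normalize the primitive: writing $\lambda_0:=\tfrac12(x\,dy-y\,dx)$ for the rotation-invariant primitive of $dx\wedge dy$, the difference $\lambda-\lambda_0$ is closed on $\bR^2$, hence exact, $\lambda-\lambda_0=dg$; then $g\circ c_n\circ e$ is a primitive of $(c_n\circ e)^*(\lambda-\lambda_0)$ and converges uniformly to $g\circ c\circ e$ as soon as $c_n\to c$ in $C^0$, so it is enough to make the primitives of $(c_n\circ e)^*\lambda_0$ converge locally uniformly. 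Now fix $0=t_0<t_1<\dots<t_m=2\pi$ so that on each subarc $c([t_{j-1},t_j])$ the curve is, in rotated coordinates, the graph $v=\phi_j(u)$ of a continuous $\phi_j$. On an interval where $c_n$ is built to be the graph $u\mapsto(u,\phi_{j,n}(u))$ of a mollification $\phi_{j,n}$ of $\phi_j$, computing the pullback and integrating by parts gives $\int(c_n\circ e)^*\lambda_0=\int\tfrac12\bigl(u\,\phi_{j,n}'(u)-\phi_{j,n}(u)\bigr)\,du=\tfrac12\,u\,\phi_{j,n}(u)-\int\phi_{j,n}(u)\,du$; the large derivative $\phi_{j,n}'$ has been integrated away, so this increment converges uniformly in its endpoints because $\phi_{j,n}\to\phi_j$ uniformly. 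Accordingly I would let $c_n$ coincide with the mollified graph on each $[t_{j-1}+\eta_n,\,t_j-\eta_n]$ with $\eta_n\downarrow 0$, and across the short junction windows $[t_j-\eta_n,\,t_j+\eta_n]$ interpolate by a smooth embedded arc confined to a ball around $c(t_j)$ whose radius and length both shrink to $0$. Then $c_n\to c$ in $C^0$; and on $[0,2\pi]$ the primitive $f_n$ (normalized by $f_n(0)=0$) converges uniformly to a continuous $f$, since the core increments converge by the displayed formula while the junction increments are $o(1)$ uniformly. Finally $f_n(s+2\pi)=f_n(s)+\int_{S^1}c_n^*\lambda$ with the added constant converging, which upgrades this to locally uniform convergence on all of $\bR$, giving condition (2).

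Granting the construction, \cref{thm:main_intro} applies and $c$ inscribes a $\theta$-rectangle for every $\theta\in(0,\pi)$. I expect the genuine obstacle to be exactly the construction of $(c_n)$ sketched above --- and it is presumably where the work of \cref{section:curves} goes --- namely arranging the mollified chart pieces together with the junction connectors so that the resulting loop is genuinely a smooth embedded Jordan curve (cores are smooth graphs hence embedded; the junction arcs are embedded and confined to balls chosen small enough to interact only locally with the rest of $c_n$; and parameters far apart in $S^1$ keep far-apart images under a $C^0$-small perturbation of the embedded $c$), \emph{while} simultaneously keeping the junction arcs short enough as curves (not merely in parameter) that their contributions to the primitive vanish. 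That is the only point requiring care beyond the elementary integration-by-parts estimate above, which has slack to spare --- it would already go through with $\phi_{j,n}\to\phi_j$ merely in $L^1$.
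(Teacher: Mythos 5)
Your overall strategy is the paper's: check the hypotheses of \cref{thm:main_intro}, with the measure-zero and area normalizations handled exactly as you say, and your integration-by-parts observation (only $\phi_{j,n}$, never $\phi_{j,n}'$, enters the primitive) is the same analytic insight the paper encodes in its formula $f_p(q) = \int_{g_p(p)}^{g_p(q)} c(g_p^{-1}(q'))\cdot \vec{n}(p)\,dq' + h_p(c(q))$, a Stieltjes-type integral in the monotone coordinate $g_p(q)=c(q)\cdot\vec{v}(p)$. The gap is in your construction of the approximating sequence $(c_n)$, and it is not only the embeddedness issue you flag. First, a parametrization problem: conditions (1) and (2) of \cref{thm:main_intro} concern $c_n$ as maps $S^1\to\bR^2$ with the \emph{same} parameter as $c$, and $f_n$ as a function of that parameter. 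Your core pieces are graphs parametrized by $u$; to make $c_n(q)$ uniformly close to $c(q)$ and $f_n(q)$ converge pointwise-uniformly (not merely the increments over the partition intervals), the $u$-coordinate of $c_n(q)$ must track $g_j(q)=c(q)\cdot\vec{v}_j$, which is only continuous and strictly monotone. Using $g_j$ itself destroys smoothness of the map $c_n$ (so $(c_n\circ e)^*\lambda$ is no longer defined), while an arclength-type smooth parametrization of the mollified graph is in general far from $c$ in $C^0$. Fixing this forces you to smooth the reparametrization as well, at which point you are essentially mollifying the components of $c$ in the parameter. Second, at the junctions, "confined to a small ball" does not by itself give embeddedness: both adjacent mollified cores also enter that ball (their endpoints converge to $c(t_j)$), so you must rule out crossings between the two cores and with the connector, e.g.\ by exploiting strict monotonicity in the direction $\vec{v}(t_j)$ and a careful ordering of the choices of window size versus mollification scale. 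None of this is carried out, and it is not "where the work of \cref{section:curves} goes" --- the paper never stitches.

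The paper's route avoids both issues at once: it defines $c_n(p)=\int_\bR \delta_n^{-1}\chi(\delta_n^{-1}u)\,c(p-u)\,du$, i.e.\ it mollifies the parametrization itself. Convolution preserves the strict monotonicity of $q\mapsto c(q)\cdot\vec{v}(p)$ on (slightly shrunk) $U_p$, giving local injectivity and an immersion, and global injectivity for large $n$ follows from $\|c_n-c\|_{C^0}\to 0$ together with injectivity of $c$; so $c_n$ is a smooth Jordan curve with the same parameter domain and $C^0$-convergence is immediate. The limit primitive $f$ is defined directly by gluing the local functions $f_p$ above, and convergence $f_n\to f$ follows from the uniform convergence of $c_n(g_{n,p}^{-1}(\mathchar`-))\cdot\vec{n}(p)$ to $c(g_p^{-1}(\mathchar`-))\cdot\vec{n}(p)$. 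If you want to keep your chartwise approach, you must supply the smoothed reparametrization and the junction embeddedness argument; as written, the construction of $(c_n)$ satisfying (1) and (2) is incomplete.
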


We briefly explain our strategy for the proof of the theorem. 
Given a Jordan curve $\cur$, by scaling, we may assume that the area of the open domain bounded by $\cur$ is $\pi$.

The first ingredient is the trick to interpret inscribed $\theta$-rectangles into Lagrangian intersection, which has already appeared in \cite{GL23,GL24floerhomologysquarepegs,Gao2024generic}. 
We identify $\bR^2$ with $\bC$, which we regard as a symplectic manifold. 
Note that if $\cur$ is smooth, it is a Lagrangian submanifold of $\bC$, thus $\cur \times \cur$ is also a Lagrangian submanifold of $\bC \times \bC$.
For $\theta \in [0,\pi]$, define a Hamiltonian diffeomorphism $R_\theta \colon \bC^2 \to \bC^2$ by 
\begin{equation}\label{eq:Rtheta}
    R_\theta =
    \begin{pmatrix} 1 & 1 \\ -1 & 1 \end{pmatrix} ^{-1}
    \begin{pmatrix} 1 & 0 \\ 0 & e^{-\sqrt{-1}\theta} \end{pmatrix}
    \begin{pmatrix} 1 & 1 \\ -1 & 1 \end{pmatrix} .
\end{equation}
One can easily find that a $\theta$-rectangle corresponds to four distinct points $z,w,z',w'$ such that $R_\theta(z',w')=(z,w)$. 
Since $R_\theta(z,z)=(z,z)$, $R_\theta$ is the identity on the diagonal $\Delta_{\bC}$ of $\bC \times \bC$, which corresponds to  degenerate rectangles. 
Thus, the problem of finding a $\theta$-rectangle inscribed in $\cur$ is reduced to finding a intersection point between $\cur \times \cur$ and $R_\theta(\cur \times \cur)$ outside the diagonal $\Delta_C$ of $\cur \times \cur$.

The second ingredient is the following method coming from microlocal sheaf theory, in particular, sheaf quantization. 
For a smooth Jordan curve $\cur$, it is known that one can construct a canonical object $F_{\cur}$ in the Tamarkin category whose microsupport is $\cur \times \cur$, called the sheaf quantization of $\cur \times \cur$.
See \cref{section:preliminaries,section:SQ} for more precise definitions. 
By the completeness of the Tamarkin category with respect to the interleaving distance~\cite{AI24completeness,GV24coisotropic}, for a continuous Jordan curve $\cur$, we can still construct its sheaf quantization $F_{\cur}$.
Moreover, by a result in Guillermou--Kashiwara--Schapira~\cite{GKS}, the action $R_\theta$ lifts to the Tamarkin category category.
The Hom space $\Hom(F_{\cur}, R_\theta F_{\cur})$ captures the information of the intersection $(\cur \times \cur) \cap R_\theta(\cur \times \cur)$ and is equipped with a filtration, which can be regarded as a persistence module with structure maps $(\tau_{a,a'})_{a \le a'}$. 
We focus on a ``critical value" $a_0 \in \bR$ such that $\tau_{a,a'}$ is not an isomorphism if $a < a_0 < a'$. 
By the conditions in \cref{thm:main_intro}, the diagonal $\Delta_\cur$ contributes only to critical values in $\pi \bZ$. 
We will show that there exists a critical value that is not in $\pi \bZ$, which proves the theorem.
In fact, we give a sheaf-theoretic condition for the existence of a $\theta$-rectangle in \cref{section:sheaf_condition}. 
The conditions in \cref{thm:main_intro} implies that sheaf-theoretic condition. 

With the sheaf-theoretical approach, we can directly deal with a continuous Jordan curve, in contrast to Floer-theoretic methods, which require taking a sequence of smooth objects. 
Moreover, the important step in our proof is to analyze $\muhom(F_{\cur},R_\theta F_{\cur})$, which is expected to correspond to local Floer cohomology. 
The computation method of $\muhom$ would be easier than that of local Floer cohomology. 
Furthermore, $\muhom$ does not commute with limits nor colimits, which suggests that $\muhom(F_{\cur},R_\theta F_{\cur})$ for a continuous Jordan curve $C$ cannot be described in terms of a limit/colimit. 
Thus, the sheaf-theoretic approach would be more powerful than Floer-theoretic methods at the moment.
\smallskip

This paper is organized as follows.
In \cref{section:preliminaries}, we define a twisted version of the Tamarkin category. 
In \cref{section:SQ}, we construct a sheaf quantization of the standard torus and observe some basic properties. 
In \cref{section:sheaf_condition}, we give a sheaf-theoretic condition for the existence of a $\theta$-rectangle.  
In \cref{section:curves}, we prove \cref{thm:main_intro} and show \cref{cor:intro_rectifiable,cor:intro_locallymonotone}.

\subsection{Related work}

We review some history on the square and rectangular peg problem.
See Matschke~\cite{Matschke} for a detailed and overall history on these topics.

Vaughan (published in \cite{Meyerson}) showed that every continuous Jordan curve inscribes a rectangle with a simple topological argument, in which a rectangle on the Jordan curve is interpreted to a immersed point of a surface in a $3$-dimensional space. 
Hugelmeyer~\cite{Hugelmeyer18} proved that for any $n \in \bZ_{\ge 3}$, every smooth Jordan curve has an inscribed rectangle of ratio $\pi k/n$ for some $k \in \{1,\dots,n-1\}$. 
Moreover, he~\cite{Hugelmeyer21} proved that for any smooth Jordan curve, the set of values $\theta \in [0,\pi/2]$ for which the curve inscribe a rectangle of aspect angle $\theta$ has Lebesgue measure at least $\pi/6$. 
In his works, the existence of rectangular pegs is reduced to the existence of intersections of surfaces within a four-dimensional space.
Greene and Lobb~\cite{GL21} solved the rectangular peg problem for smooth Jordan curves using symplectic geometry.  
Moreover, they proved cyclic quadrilateral pegs for smooth curves in~\cite{GL23}.
In \cite{GL24floerhomologysquarepegs}, Greene and Lobb used a version of Lagrangian intersection Floer theory and spectral invariants to prove assertions for rectifiable curves with an additional condition.
Our result is on the line of these.

Our results are also a generalization of the following. 
Emch~\cite{Emch16} proved the existence of an inscribed square for piecewise analytic curves satisfying some additional assumptions. 
Schnirelman~\cite{Schnirelman44} proved it for a class of curves that contains $C^2$, and Stromquist~\cite{Stromquist} proved for locally monotone curves.
Tao~\cite{TAO2017} proved the existence of an inscribed square for a curve that is the union of the graphs of two Lipschitz continuous functions with Lipschitz constant less than~$1$.
Greene--Lobb~\cite{GL24squarepegsgraphs} strengthened Tao's result to the case where the Lipschitz constant is less than $1+\sqrt{2}$. 
Feller--Golla~\cite{FellerGolla} has weakened the regularity condition of the result by Hugelmeyer~\cite{Hugelmeyer18}.

There are also some recent  results~\cite{Gao2024generic,Hugelmeyer2024periodic,GL24polynomial} for related problems with the use of Lagrangian intersection Floer theory.

\section{Preliminaries}\label{section:preliminaries}

Throughout this paper, we set the base field $\bfk$ to be $\bF_2=\bZ/2\bZ$.
Let $X$ be a manifold. 
Let $\pi \colon T^*X \to X$ denote the cotangent bundle and $(x;\xi)$ denote the homogeneous symplectic local coordinate on $T^*X$.
We denote by $\lambda_X = \sum_{i} \xi_i dx_i$ the Liouville $1$-form on $T^*X$. 
We often simply write $\lambda$ for $\lambda_X$.

\begin{notation}
    For objects $F,G$ in a $\bfk$-linear stable ($\infty$-)category, $\Hom (F,G)$ (resp.\ $\End (F)$) denotes the Hom (resp.\ End) object in $\Module (\bfk)$, the presentable stable category of $\bfk$-vector spaces.
    For $v \in H^n(\Hom (F,G))$ (resp.\ $v\in H^n(\End (F))$) for some $n \in \bZ$, we simply write $v\in \Hom (F,G)[n]$ (resp.\ $v\in \End(F)[n]$).
\end{notation}

\subsection{Twisted sheaves}

Let $\Sh(X)$ be the $\bfk$-linear presentable stable category of sheaves of $\bfk$-vector spaces on $X$. 
For each object $F \in \Sh(X)$, we write $\CMS(F)$ for the \emph{conic microsupport}\footnote{In the literature, this is usually called the microsupport, but we use this name for the non-conic microsupport defined below.} of $F$, which is a closed conic subset of $T^*X$. 
For a closed subset $A$ of $T^*X$, we denote by $\Sh_A(X)$ the subcategory of $\Sh(X)$ consisting of objects with conic microsupport contained in $A$. 

In this paper, we use the notion of twisted sheaves. 
We give a short review for twisted sheaves from \cite{KashiwaraRepDflag}. 
Guillermou~\cite{Gu12,Gui23} and Jin~\cite{Jin2020J-homomorphism} used twisted sheaves in the process of constructing sheaf quantizations of compact exact Lagrangian submanifolds in cotangent bundles, 
and we use them in a parallel manner in this work. 
The formulation within the context of $\infty$-categories has been done in \cite{CKNS24}, and we follow their approach. 
See \cite{CKNS24} for the precise definition and treatment of twisted sheaves. 
Here, we only treat very restrictive twistings and one can describe twisted sheaves via untwisted sheaves. See \cref{rem:twisted_sheaves} below. 

Let $\Pic (\bfk)$ be the ($\infty$-)group consisting of the invertible objects in $\Module(\bfk)$. 
In our setting $\bfk=\bZ/2\bZ$, $\Pic (\bfk)$ is isomorphic to $\bZ$ (the element $\bfk[n]\in \Pic(\bfk)$ corresponds to $n\in \bZ$).
Let $\eta \colon X \to B\Pic(\bfk)$ be a twisting. 
We denote $\Sh^\eta(X)$ the category of sheaves on $X$ twisted by $\eta$. 
A homotopy between two twistings $\eta_1$ and $\eta_2$ gives an identification $\Sh^{\eta_1}(X)\simeq \Sh^{\eta_2}(X)$. 
In particular, a null homotopy (to the basepoint) of a twisting $\eta$ gives an identification $\Sh^\eta(X) \simeq \Sh(X)$. 
Let $X, Y$ be manifolds and $\eta_X \colon X \to B\Pic(\bfk)$ (resp.\ $\eta_Y \colon Y \to B\Pic(\bfk)$) be a twisting. 
For a morphism of manifolds $f \colon X \to Y$, if $f^*\eta_Y \coloneqq \eta_Y \circ f = \eta_X$, one can define functors\footnote{In this paper, we use the symbol $f^*$ instead of $f^{-1}$.}
\[
    f_*, f_! \colon \Sh^{\eta_X}(X) \to \Sh^{\eta_Y}(Y), \quad 
    f^*, f^! \colon \Sh^{\eta_Y}(Y) \to \Sh^{\eta_X}(X)
\]
satisfying the adjunction properties $f^*\dashv f_*$ and $f_!\dashv f^!$. 
Moreover, for two twisting $\eta,\eta' \colon X \to B\Pic(\bfk)$, we can define functors 
\begin{align*}
    \otimes \colon \Sh^{\eta}(X) \times \Sh^{\eta'}(X) & \to \Sh^{\eta \cdot \eta'}(X), \\ 
    \cHom \colon \Sh^{\eta}(X)^{\opp} \times \Sh^{\eta'}(X) & \to \Sh^{\eta^{-1} \cdot \eta'}(X).
\end{align*}
For an object $F \in \Sh^\eta (X)$, we can define its conic microsupport $\CMS(F)$ in a similar way to the untwisted case. 
We define $\Sh^\eta_A(X)$ in a similar way to the untwisted case.

We recall some facts about the microlocalization (see~\cite[Chap.~IV]{KS90}), in the twisted case. 
Let $\eta_1,\eta_2\colon X\to B\Pic(\bfk)$ be two twistings and let $F\in \Sh^{\eta_1}(X)$ and $G\in \Sh^{\eta_2}(X)$.
One can define a twisted sheaf $\muhom(F,G)\in \Sh^{\eta_1^{-1}\cdot \eta_2}(T^*X)$ on $T^*X$ in a similar way to \cite[Section~4.4]{KS90}, where $\eta_1^{-1}\cdot \eta_2\colon T^*X\to B\Pic (\bfk)$ is the composite of the projection $T^*X\to X$ and the twisting $\eta_1^{-1}\cdot \eta_2\colon X\to B\Pic(\bfk)$. 
Indeed, since the original $\muhom$ is defined via 6-functors, we can apply the same construction by tracing the twisting. 
The support of $\muhom(F,G)$ is contained in $\CMS(F) \cap \CMS(G)$.  
We have a natural isomorphism $\cHom(F,G) \simto \pi_* \muhom(F,G)$, and also $\cHom(F,G) \simeq i^*\muhom(F,G)$, where $i$ is the inclusion of the zero-section.

Now we assume that $\Lambda = \CMS(F) \setminus 0_X$ is a (conic) connected Lagrangian submanifold of $ T^*X\setminus 0_X$.  
For a function $f \colon X \to \bR$ of class $C^2$ such that $\Gamma_{df}$ intersect $\Lambda$ transversally at $(x_0;\xi_0)$, the space $m(F,f,x_0) = (\Gamma_{\{f\geq f(x_0)\}}(F))_{x_0}$ is called the \emph{microstalk} at $(x_0;\xi_0)$. 
It is proved that $m(F,f,x_0)$ is independent of $f$ and $(x_0;\xi_0)$ up to shift (see~\cite{KS90} Prop.~7.5.3 and Cor.~7.5.7). 
We say that $F$ is \emph{simple} or of microlocal rank $1$ along $\Lambda$ if $m(F,f,x_0) \simeq \bfk[d]$ for some $d \in \bZ$.

Let $F, G\in \Sh^\eta(X)$ be simple sheaves and assume that $\CMS(F)$ and $\CMS(G)$ intersect cleanly outside the zero-section. 
Then, for a connected component $\Lambda_0$ of $(\CMS(F)\cap \CMS(G))\setminus 0_X$, we have an isomorphism $\muhom (F,G)|_{\Lambda_0} \simeq \bfk_{\Lambda_0}[d]$ for some $d \in \bZ$. 

\subsection{Twisted Tamarkin category}

In this subsection, we introduce a twisted version of the Tamarkin category.
We follow \cite{KSZ23} for the $\infty$-categorical treatment of the Tamarkin category. 
We replace the Tamarkin direction $\bR_t$ with $\bR_t/\pi \bZ$, where $\pi$ is the area of the domain bounded by the standard unit circle $\cstd$ in $\bR^2$ with radius $1$.

Let $N$ be a manifold.
We fix a twisting $\eta \colon \bR_t/\pi\bZ \to B \Pic (\bfk)$. 
Since we work on $\bfk=\bF_2$, we may assume that $\eta$ is the delooping of $\bZ \to \Pic(\bfk); 1\mapsto \bfk[n]$ for some $n \in \bZ$.
By abuse of notation, we also write $\eta$ for the composite of $\eta$ and the projection $N \times \bR_t/\pi\bZ \to \bR_t/\pi\bZ$.

We consider the category $\Sh^\eta (N \times \bR_t/\pi\bZ)$ consisting of sheaves on $N \times \bR_t/\pi\bZ$ twisted by $\eta$. 
We define the twisted version of the Tamarkin category by 
\[
    \Tam(T^*N)\coloneqq \Sh^\eta (N \times \bR_t/\pi\bZ)/\{ F\mid \CMS(F)\subset \{\tau \le 0\}\}.
\]
The quotient functor $\Sh^\eta (N \times \bR_t/\pi\bZ)\to \Tam(T^*N)$ admits a left adjoint and a right adjoint. 
Both of these functors are fully faithful. 
We sometimes regard $\Tam (T^*N)$ as a full subcategory of $\Sh^\eta (N \times \bR_t/\pi\bZ)$ via either of these functors.
For an object $F \in \Tam(T^*M)$, we define 
\[
    \DMS(F) \coloneqq \CMS(F) \cap \{\tau=1\}.
\]
For a closed subset $A\subset T^*N\times \bR_t/\pi\bZ$, we set
\[
    \Tam_A(T^*N) \coloneqq \{ F \in \Tam(T^*N) \mid \DMS(F) \subset A \}.   
\]
We set $T^*_{\tau>0}(N \times \bR_t/\pi\bZ) \coloneqq \{ \tau>0\} \subset T^*(N \times \bR_t/\pi\bZ)$ and define a map $\rho \colon T^*_{\tau>0}(N \times \bR_t/\pi\bZ) \to T^*N$ by $(x,t;\xi,\tau) \mapsto (x;\xi/\tau)$.
For an object $F \in \Tam(T^*N)$, we set
\[
    \MS(F) \coloneqq \overline{\rho(\CMS(F) \cap \{\tau>0\})} \subset T^*N
\]
and call it the (non-conic or reduced) \emph{microsupport} of $F$.

Let $q_i \colon N \times \bR/\pi\bZ \times \bR/\pi\bZ \to N \times \bR_t/\pi\bZ; (x,t_1,t_2) \mapsto (x,t_i)$ denote the projection and $m \colon N \times \bR/\pi\bZ \times \bR/\pi\bZ \to N \times \bR_t/\pi\bZ; (x,t_1,t_2) \mapsto (x,t_1+t_2)$ denote the addition map.
For $F,G \in \Tam(T^*N)$, we define 
\begin{align*}
    F \star G & \coloneqq m_!(q_1^*F \otimes q_2^*G) \in \Tam(T^*N), \\
    \cHom^\star(F,G) & \coloneqq {q_1}_* \cHom(q_2^*F,m^!G) \in \Tam(T^*N).
\end{align*}
Then $\star$ induces the monoidal operation of $\Tam(T^*N)$, and $\cHom^\star$ defines the internal hom of $\Tam(T^*N)$. 

For $a \in \bR$, let $T_a$ be the map $N\times \bR_t/\pi\bZ \to N\times \bR_t/\pi\bZ \colon (x,t)\mapsto (x, t+[a])$, where $[a]$ is the image of the quotient map $\ell \colon \bR_t \to \bR_t/\pi\bZ$.
By definition, ${T_a}_*$ is a functor from $\Sh^{T_{a}^*\eta}(N\times \bR_t/\pi\bZ)$ to $\Sh^{\eta}(N\times \bR_t/\pi\bZ)$. 
We identify $\Sh^{T_{a}^*\eta}(N\times \bR_t/\pi\bZ)$ with $\Sh^{\eta}(N\times \bR_t/\pi\bZ)$ by the homotopy $(\eta\circ T_{sa})_{s \in [0,1]}$. 
We obtain an automorphism on $\Sh^{\eta}(N\times \bR_t/\pi\bZ)$ and it induces an automorphism on $\Tam(T^*N)$. 
We write the functor as $T_a$. 
Note that $T_0=\id$ and $T_{\pi}$ is the shift functor $[-n]$.

The functor $T_a$ is naturally isomorphic to the functor $\ell_!\bfk_{N\times [a,\infty)}\star (\mathchar`-)$. 
For $a\le a' \in \bR$, a natural transformation $\tau_{a,a'} \colon T_a \Rightarrow T_{a'}$ is induced by the natural morphism $\bfk_{N\times [a,\infty)}\to \bfk_{N\times [a',\infty)}$. 
This enable us to define a pseudo-distance $d$ on the set of the objects of $\Tam(T^*N)$ as in \cite{AI20,AISQ,AI24completeness}.
Namely, for $F, G \in \Tam(T^*N)$, define
\[
    d(F,G) \coloneqq 
    \inf \lc a+b \relmid 
    \begin{aligned}
        & \exists \alpha \colon F \to T_{a} G, \exists \beta \colon G \to T_{b} F \text{ such that } \\
        & T_{a} \beta \circ \alpha \simeq \tau_{0,a+b}(F), T_{b} \alpha \circ \beta \simeq \tau_{0,a+b}(G)
    \end{aligned} \rc.
\]
Such a pair of morphisms $(\alpha,\beta)$ is called \emph{$(a,b)$-interleaving} for $(F,G)$ and the pseudo-distance $d$ is called the \emph{interleaving distance}.
This pseudo-distance is in fact complete.

\begin{proposition}[{\cite[Cor.~4.5]{AI24completeness} and \cite[Prop.~6.22]{GV24coisotropic}}]\label{prop:completeness}
    The interleaving distance $d$ is a complete pseudo-distance, i.e., any Cauchy sequence with respect to $d$ has a limit object in $\Tam(T^*N)$.
\end{proposition}

The conic microsupport of a limit object can be estimated as follows.

\begin{proposition}[{\cite[Prop.~6.26]{GV24coisotropic}}]\label{prop:limit_ss}
    Let $(F_n)_n$ be a sequence in $\Tam(T^*N)$ and assume that it converges to $F_\infty$ with respect to the interleaving distance $d$. 
    Then 
    \[
        \DMS(F_\infty) \subset \bigcap_{k \in \bN} \overline{\bigcup_{n \ge k} \DMS(F_n)}.
    \]
\end{proposition}

The interleaving distance $d$ is degenerate in general, but it is proved that $d$ is non-degenerate on the category of metric-limit objects of constructible sheaves.
For a real analytic manifold $N$, an object $F \in \Tam(T^*N)$ is said to be \emph{limit constructible} if it is a metric limit of constructible sheaves with respect to the interleaving distance $d$.
A limit object of a sequence of limit constructible sheaves is unique up to isomorphism due to the following proposition. 

\begin{proposition}[{\cite[Prop.~B.8]{GV24coisotropic}}]\label{prop:limit_constructible}
    If $F, G \in \Tam(T^*N)$ are limit constructible and $d(F,G)=0$, then $F \simeq G$. 
\end{proposition}

We have the following isomorphism:
\begin{equation}\label{eq:isom_homstar}
    \Hom (F,T_a G)\simeq \Gamma_{[-a,\infty )}(\bR; \ell^!q_*\cHom^\star (F,G)),
\end{equation}
where $q \colon N \times \bR_t/\pi\bZ \to \bR_t/\pi\bZ$ is the projection.
We denote by $\cT(T^*N)$ the usual Tamarkin category of $T^*N$ defined as 
\[
    \cT(T^*N) \coloneqq \Sh(N \times \bR_t)/\{F \mid \CMS(F) \subset \{\tau \le 0\}\}.
\]
Then the functor $\ell^! \colon \Tam(T^*N) \to \cT(T^*N)$ is conservative and the functor $\ell_! \colon \cT(T^*N) \to \Tam(T^*N)$ is symmetric monoidal. 
One can equip a complete pseudo-distance $d$ with $\cT(T^*N)$ in a similar way to $\Tam(T^*N)$, and obtain a conic microsupport estimate similar to \cref{prop:limit_ss}. One can also define limit constructible objects in $\cT(T^*N)$ similarly.

A constructible object in $\cT(\pt)$ is isomorphic to $\bigoplus_{\alpha \in A} \bfk_{I_\alpha}[d_\alpha]$ for a locally finite family of intervals $(I_\alpha)_{\alpha \in A}$ and a family of integers $(d_\alpha)_{\alpha \in A}$ (\cite[Thm.~1.17]{KS18persistent} and \cite[Cor.~IV.4.3]{Gui23}).
For a limit constructible object in $\cT(\pt)$, we have the following decomposition by interval modules.

\begin{proposition}[{\cite[Cor.~B.12]{GV24coisotropic}}]\label{prop:barcode_decomposition}
    Let $F \in \cT(\pt)$ and assume that $F$ is limit constructible.
    Then there exists a countable family of intervals $(I_\alpha)_{\alpha \in A}$ and a family of integers $(d_\alpha)_{\alpha \in A}$ such that
    \[
        F \simeq \bigoplus_{\alpha \in A} \bfk_{I_\alpha}[d_\alpha].
    \]
    Moreover, for any $\varepsilon >0$, the family $(I_\alpha \mid \alpha \in A, |I_\alpha| \ge \varepsilon )$ is locally finite.
\end{proposition}

When $N=\pt$, we simply write $\Tam \coloneqq \Tam(\pt)$.
Similar to \cite[Prop.~5.5]{KSZ23} combined with \cite[Lem.~2.9]{CKNS24}, one can check 
\[
    \Tam(T^*N)\simeq \Sh(N)\otimes \Tam\simeq \Sh(N; \Tam),
\]
where the last category stands for the category of sheaves on $N$ with coefficient in $\Tam$.
Through this identification, the operations $\star$ and $\cHom^\star$ in the category $\Tam(T^*N)$ are usual $\otimes$ and $\cHom$ with coefficient in $\Tam$.
See \cite{Volpe} for 6-functor formalism for locally compact Hausdorff spaces and more general coefficients.

For $K_{12} \in \Tam(T^*(N_1 \times N_2)), K_{23} \in \Tam(T^*(N_2 \times N_3))$, we can also define the operation $\ostar$ by 
\[
    K_{12} \ostar K_{23} \coloneqq m_{13!} (q_{12}^*K_{12} \otimes q_{23}^*K_{23}),
\]
where $q_{ij} \colon N_1 \times N_2 \times N_3 \times \bR/\pi\bZ \times \bR/\pi\bZ \to N_i \times N_j \times \bR_t/\pi\bZ$ is the projection, and 
\[
\begin{aligned}
    m_{13} \colon N_1 \times N_2 \times N_3 \times \bR/\pi\bZ \times \bR/\pi\bZ & \to N_1 \times N_3 \times \bR/\pi\bZ; \\
    (x_1,x_2,x_3,t_1,t_2) & \mapsto (x_1,x_3,t_1+t_2).
\end{aligned}
\]
Through the identification with sheaf category with coefficient in $\Tam$, the operation $\ostar$ corresponds to the usual convolution. 

For $K_{12} \in \cT(T^*(N_1 \times N_2)), K_{23} \in \Tam(T^*(N_2 \times N_3))$, we can also define the operation $\ostar$ by a similar method. 
This $K_{12}\ostar K_{23}$ is isomorphic to $\ell_! K_{12} \ostar K_{23}$ defined above. 

\begin{remark}\label{rem:twisted_sheaves}
    The category $\Tam(T^*N)$ can be identified with a full subcategory of $\Sh^\eta(N\times \bR_t/\pi\bZ)$. 
    We can describe objects of $\Sh^\eta(N\times \bR_t/\pi\bZ)$ via untwisted sheaves. 
    Take real numbers $t_0<t_1<t_2<t_3$ satisfying $t_2-t_0<\pi$, $t_3-t_1<\pi$, and $t_3-t_0>\pi$. 
    Set $U_0=\ell((t_0, t_2))$, $U_0=\ell((t_1, t_3))$, $V_0=\ell((t_0, t_1))$, and $V_1=\ell((t_2, t_3))$. 
    By the sheaf property of $\Sh^\eta(\mathchar`-)$ on $M\times \bR_t/\pi \bZ$, 
    an object $\Sh^\eta(N\times \bR_t/\pi\bZ)$ is equivalent to the datum $(F_0, F_1, \alpha_1, \alpha_0)$ where $F_i$ is an object of $\Sh(N\times U_i)$ for each $i=0,1$, and $\alpha_1 \colon F_0|_{N\times V_1}\simeq F_1|_{N\times V_1}$, $\alpha_0 \colon F_0[n]|_{N\times V_0}\simeq F_1|_{N\times V_0}$ are isomorphisms. 

    Gluing $F_0$ and $F_1$ by $\alpha_1$ firstly, we can see that above datum is also equivalent to $(F, \alpha_0)$ where $F$ is an object of $\Sh(N\times (t_0,t_3))$
    and $\alpha_0 \colon F[n]|_{N\times (t_0,t_3-\pi)}\simeq F|_{N\times (t_0+\pi, t_3)}$ is an isomorphism via the identification $N\times (t_0,t_3-\pi)\simeq N\times (t_0+\pi, t_3)\colon (x,t)\mapsto (x,t+\pi)$. 
\end{remark}

For $F,G\in \Tam (T^*N)$, the object $\muhom (F,G)|_{\{\tau>0\}}\in \Sh(\{\tau>0\})$ is invariant under isomorphisms in $\Tam(T^*M)$. 
Not only $\muhom|_{\{\tau>0\}}\colon\Tam (T^*N)^{\opp}\times \Tam (T^*N) \to \Sh(\{\tau>0\})$ is a functor, but also $\muhom$ makes $\Tam (T^*N)$ into a $\Sh(\{\tau>0\})$-enriched category. 
This follows from the fact that $\muhom$ is the Hom sheaf of a stack called Kashiwara--Schapira stack~\cite{KS90,Gui23}. 
See also \cite[Remark~2.13]{KuoLiSpherical} for an $\infty$-categorical treatment. 
In what follows, we denote $\muhom(F,G)|_{\{\tau>0\}}$ simply by $\muhom(F,G)$ for $F,G \in \Tam(T^*N)$.

We have the following (co)fiber sequence associated with the Hom spaces and $\muhom$.

\begin{lemma}\label{lem:muhom_fiber_seq}
    For $F, G \in \Tam(T^*N)$ such that $\MS(F)$ and $\MS(G)$ are compact, we have a fiber sequence 
    \[
        \colim_{\varepsilon \to 0} \Hom(F, T_{-\varepsilon}G) \to \Hom(F,G) \to \Gamma(\{\tau>0\};\muhom(F,G)). 
    \]
\end{lemma}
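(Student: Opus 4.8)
The plan is to extract the fiber sequence from the standard relationship between $\muhom$, $\cHom$, and the translation functors $T_a$, keeping careful track of the fact that we are on the circle $\bR_t/\pi\bZ$ rather than $\bR_t$. First I would recall that by \eqref{eq:isom_homstar} we have $\Hom(F, T_a G) \simeq \Gamma_{[-a,\infty)}(\bR; \ell^! q_* \cHom^\star(F,G))$, where $q \colon N \times \bR_t/\pi\bZ \to \bR_t/\pi\bZ$ is the projection. Write $H \coloneqq \ell^! q_* \cHom^\star(F,G) \in \Sh(\bR)$. The translation maps $T_{-\varepsilon} \Rightarrow T_0 = \id$ for $\varepsilon \ge 0$ induce, under this identification, the structure maps $\Gamma_{[\varepsilon,\infty)}(\bR; H) \to \Gamma_{[0,\infty)}(\bR; H)$, so that $\colim_{\varepsilon \to 0} \Hom(F, T_{-\varepsilon}G) \simeq \colim_{\varepsilon \to 0} \Gamma_{[\varepsilon,\infty)}(\bR; H) = \Gamma_{(0,\infty)}(\bR; H)$, the sections supported on the open half-line. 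On the other hand $\Hom(F,G) \simeq \Gamma_{[0,\infty)}(\bR; H)$. Thus the first two terms of the asserted sequence are $\Gamma_{(0,\infty)}(\bR;H) \to \Gamma_{[0,\infty)}(\bR;H)$, and the claim is that the cofiber is $\Gamma(\{\tau>0\}; \muhom(F,G))$.

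Next I would identify the cofiber. The inclusion $(0,\infty) \hookrightarrow [0,\infty)$ of closed supports gives a fiber sequence $\Gamma_{(0,\infty)}(\bR;H) \to \Gamma_{[0,\infty)}(\bR;H) \to \Gamma_{\{0\}}(\bR;H)$, and $\Gamma_{\{0\}}(\bR;H)$ is computed by the stalk-type formula $\mathrm{fib}(H_0 \to \Gamma((0,\infty); H))$ — equivalently it is $(\ell^! q_* \cHom^\star(F,G))$ with sections supported at $0$. So the task reduces to producing a natural equivalence
\[
    \Gamma_{\{0\}}(\bR; \ell^! q_* \cHom^\star(F,G)) \simeq \Gamma(\{\tau>0\}; \muhom(F,G)).
\]
Here I would use the basic microlocal dictionary: $\cHom^\star(F,G)$ is, under the identification $\Tam(T^*N) \simeq \Sh(N;\Tam)$, the internal hom, and $q_* \cHom^\star(F,G)$ lives in $\Tam = \Tam(\pt)$; its "sections supported at $0$ in the $\bR_t$-direction" are precisely the microlocalization of $\cHom(F,G)$ along the positive codirection $\tau>0$, which is exactly $R\Gamma$ of $\muhom(F,G)$ over $\{\tau > 0\}$. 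Concretely, one uses the standard formula $\muhom(F,G) \simeq$ (a microlocal cut-off of $\cHom$) together with $i^*\muhom = \cHom$ and the relation between $\Gamma_{\{t\ge 0\}}$ and $\muhom$ on the $\tau > 0$ locus; the compactness of $\MS(F)$ and $\MS(G)$ guarantees that $\muhom(F,G)$ is supported in a compact subset of $\{\tau>0\}$ (so no convergence issues at infinity) and that the colimit over $\varepsilon$ behaves well.

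Assembling: the octahedral axiom applied to $\Gamma_{(0,\infty)} \to \Gamma_{[0,\infty)} \to \Gamma_{\{0\}}$ plus the identification of the last term with $\Gamma(\{\tau>0\};\muhom(F,G))$ yields the desired fiber sequence. I expect the main obstacle to be the second step — rigorously matching $\Gamma_{\{0\}}(\bR; \ell^! q_* \cHom^\star(F,G))$ with $\Gamma(\{\tau>0\};\muhom(F,G))$ in the $\infty$-categorical, twisted, $\bR_t/\pi\bZ$-periodic setting. One must check that passing to the quotient $\bR_t/\pi\bZ$ and then applying $\ell^!$ does not interfere (the relevant sections are microlocally concentrated near $\tau>0$, hence local in $t$, so the periodicity is harmless), and that the twisting $\eta$ is carried along consistently by all six functors — but this is exactly the kind of bookkeeping already set up in the preliminaries, so it should go through. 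The compactness hypothesis on $\MS(F), \MS(G)$ is used precisely to ensure $q_*\cHom^\star(F,G)$ has microsupport bounded away from $\tau = 0$ on the relevant region, making the colimit a genuine finite stabilization and the $\muhom$ term a compactly supported object.
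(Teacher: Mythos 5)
Your overall skeleton---set $\cH \coloneqq \ell^! q_*\cHom^\star(F,G)$, rewrite both $\Hom$ terms via \eqref{eq:isom_homstar}, and identify the cofiber with microlocal data---matches the paper's proof, but your identification of the cofiber is wrong, and that is where the content of the lemma sits. First, the excision triangle attached to $\bfk_{(0,\infty)}\to\bfk_{[0,\infty)}\to\bfk_{\{0\}}$ reads $\Gamma_{\{0\}}(\bR;\cH)\to\Gamma_{[0,\infty)}(\bR;\cH)\to\Gamma((0,\infty);\cH)$, i.e.\ the support-at-a-point term sits on the \emph{left}, not on the right as in your sequence; moreover $\colim_{\varepsilon\to 0}\Gamma_{[\varepsilon,\infty)}(\bR;\cH)$ is not $\Hom(\bfk_{(0,\infty)},\cH)$ (a colimit in the source of $\Hom$ becomes a limit). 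More importantly, the cofiber of $\colim_{\varepsilon\to 0}\Gamma_{[\varepsilon,\infty)}(\bR;\cH)\to\Gamma_{[0,\infty)}(\bR;\cH)$ is the \emph{one-sided} object $\colim_{\varepsilon\to 0}\Gamma_{[0,\varepsilon)}((-\infty,\varepsilon);\cH)$, the stalk at $0$ of $\Gamma_{[0,\infty)}(\cH)$, which only records the jump of $\cH$ at $0$ from the left; it differs from the two-sided local cohomology $\Gamma_{\{0\}}(\bR;\cH)$ that you use, and your ``stalk-type formula'' $\mathrm{fib}(\cH_0\to\Gamma((0,\infty);\cH))$ computes neither. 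A minimal example: for $\cH=\bfk_{[0,\infty)}$ one has $\colim_\varepsilon\Gamma_{[\varepsilon,\infty)}(\bR;\cH)=0$ and $\Gamma_{[0,\infty)}(\bR;\cH)=\bfk$, so the cofiber is $\bfk$, whereas $\Gamma_{\{0\}}(\bR;\bfk_{[0,\infty)})=0$. This is exactly the shape that occurs here (the bars of $\cV_{\cstd,0}$ are half-open intervals $[n\pi,(n+1)\pi)$), so your route would force $\Gamma(\{\tau>0\};\muhom(F,G))$ to vanish in cases where it does not, e.g.\ $\muhom(\Fstd,\Fstd)\simeq\bfk_{\bR_{>0}\Lambda}$.

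The paper instead uses, for each $\varepsilon>0$, the fiber sequence $\Gamma_{[\varepsilon,\infty)}(\bR;\cH)\to\Gamma_{[0,\infty)}(\bR;\cH)\to\Gamma_{[0,\varepsilon)}((-\infty,\varepsilon);\cH)$, identifies the first two terms by \eqref{eq:isom_homstar}, passes to the colimit in $\varepsilon$, and then---this is the step you deferred as ``the main obstacle''---identifies $\Gamma_{[0,\infty)}(\cH)_0$ with $\Gamma(\{\tau>0\};\muhom(F,G))$ by an argument as in \cite{Ike19}; the compactness of $\MS(F)$ and $\MS(G)$ is used precisely there. Note also that compactness cannot make $\muhom(F,G)$ supported in a compact subset of $\{\tau>0\}$, since $\muhom$ is conic; what compactness buys is that the pushforward to the $t$-line and the base-change arguments in the \cite{Ike19}-type identification go through. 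To repair your proof, replace $\Gamma_{\{0\}}(\bR;\cH)$ by the one-sided stalk $\Gamma_{[0,\infty)}(\cH)_0$ and then supply that identification.
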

\begin{proof}
    Let $\cH \coloneqq \ell^! q_*\cHom^\star(F,G)$. 
    By a similar argument to \cite{Ike19}, we have an isomorphism 
    \[
        \Gamma_{[0,\infty)}(\cH)_0 \simeq \Gamma(\{\tau>0\};\muhom(F,G)),
    \]
    where we use the compactness assumption.
    For $\varepsilon>0$, we have a fiber sequence
    \[
        \Gamma_{[\varepsilon,\infty)}(\bR;\cH) \to \Gamma_{[0,\infty)}(\bR;\cH) \to \Gamma_{[0,\varepsilon)}((-\infty,\varepsilon);\cH).
    \]
    By \eqref{eq:isom_homstar}, the first term is isomorphic to $\Hom(F,T_{-\varepsilon}G)$ and the second term is isomorphic to $\Hom(F,G)$. 
    Thus, by taking colimit as $\varepsilon \to 0$, we obtain the result.
\end{proof}

\subsection{Hamiltonian action}

Let $H \colon T^*N \times I \to \bR $ be a $C^\infty$-function with compact support. 
Denote by $\phi^H=(\phi^H_s)_{s \in I} \colon T^*N \times I \to T^*N$ be the associated Hamiltonian isotopy. 
It is proved in \cite{GKS} that there exists an object $K(\phi^H) \in \Sh((N \times \bR)^2 \times I)$ whose conic microsupport outside the zero-section is equal to the conic Lagrangian movie associated with the graph of $\phi^H$.
The push forward by the map $(N \times \bR)^2 \times I \to N^2 \times \bR \times I; (x_1,t_1,x_2,t_2,s) \mapsto (x_1,x_2,t_1-t_2,s)$ and the quotient morphism $\Sh(N^2 \times \bR \times I) \to \cT(T^*(N^2 \times I))$, the object $K(\phi^H)$ defines $\cK(\phi^H) \in \cT(T^*(N^2 \times I))$, which is called the sheaf quantization or the \emph{GKS kernel} of $\phi^H$.

With a time-independent non-negative $C^\infty$-function $H \colon T^*N \to \bR$ with non-compact support, we can associate an object $\cK(\phi^H)_1 \in \cT(T^*N^2)$ as follows. 
We take a sequence of compact subset $(K_n)_n$ such that $\bigcup_n \Int(K_n)=T^*N$ and a sequence of cutoff functions $(\chi_n \colon T^*N \to [0,1])_n$ of class $C^\infty$ such that $H_n|_{K_n} \equiv 1$, $\supp(H_n) \subset \Int(K_{n+1})$, and  $H_n \le H_{n+1}$. 
Then $H_n \coloneqq \chi_n \cdot H$ has a compact support, and thus defines $\cK(\phi^{H_n}) \in \cT(T^*(N^2 \times I))$. 
By \cite{GKS}, we have a canonical continuation morphism $\cK(\phi^{H_n})_1 \to \cK(\phi^{H_{n+1}})_1$ in $\cT(T^*N^2)$ and define 
\[
    \cK(\phi^H)_1 \coloneqq \colim_{n} \cK(\phi^{H_n})_1 \in \cT(T^*N^2).
\]

Let $\varphi \in \Ham_c(T^*N)$ be a compactly supported Hamiltonian diffeomorphism on $T^*N$. 
For a compactly supported $C^\infty$-function $H \colon T^*N \times I \to \bR$ such that $\phi^H_1=\varphi$, the object $\cK(\phi^H)|_{s=1}$ does not depend on the choice of~$H$ (see \cite{AI24completeness}), which we will denote by $\cK(\varphi) \in \cT(T^*N^2)$.
We call $\cK(\varphi)$ the sheaf quantization or the \emph{GKS kernel} of $\varphi$.

Recall that we set $\bfk=\bF_2$.
In this case, it is proved in \cite{GV24coisotropic} that the distance $d(\cK(\varphi_0), \cK(\varphi_1))$ is equal to the spectral metric between $\varphi_0$ and $\varphi_1$: 
\[
    d(\cK(\varphi_0), \cK(\varphi_1)) = \gamma(\varphi_0,\varphi_1). 
\]
By \cite{seyfaddini2012descent}, for a fixed compact subset $K$ of $T^*N$, there exists a constant $C'>0$ such that for any $\varphi_0,\varphi_1$ whose supports are contained in $K$, 
\[
    \gamma(\varphi_0,\varphi_1) \le C' d_{C^0}(\varphi_0,\varphi_1).
\]
By combining these results, we obtain 
\[
    d(\cK(\varphi_0), \cK(\varphi_1)) \le C' d_{C^0}(\varphi_0,\varphi_1)
\]
for any $\varphi_0,\varphi_1$ whose supports are contained in $K$.
Since $\cT(T^*N^2)$ is complete with respect to the pseudo-distance $d$ (\cref{prop:completeness}), for any compact supported Hamiltonian homeomorphism $\varphi$ on $T^*N$, we obtain an object $\cK(\varphi) \in \cT(T^*N^2)$ whose microsupport is the graph of $\varphi$ by \cref{prop:limit_ss}.
If there is no confusion, we simply write $\cK(\varphi) F$ for $\cK(\varphi) \ostar F$.

\begin{lemma}\label{lem:T-linear}
    Let $F,G \in \Tam(T^*N)$ and $\varphi$ be a Hamiltonian homeomorphism with compact support on $T^*N$.
    Then, one has 
    \[
    q_*\cHom^\star(\cK(\varphi) F, \cK(\varphi) G)\simeq q_*\cHom^\star(F, G)
    \]
\end{lemma}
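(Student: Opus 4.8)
The plan is to reduce the statement to the invariance of $\cHom^\star$ under the Hamiltonian action $\cK(\varphi)\ostar(\mathchar`-)$, which is one of the defining features of the Guillermou–Kashiwara–Schapira kernel. First I would recall that $\cK(\varphi)\ostar(\mathchar`-)$ is an autoequivalence of $\Tam(T^*N)$, with quasi-inverse $\cK(\varphi^{-1})\ostar(\mathchar`-)$; this holds for smooth compactly supported $\varphi$ by \cite{GKS} and extends to Hamiltonian homeomorphisms by the completeness argument recalled just before the lemma, since $\cK(\varphi)\ostar\cK(\varphi^{-1})\simeq \cK(\id)$, the unit for $\ostar$. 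The key point is then that this autoequivalence is compatible with the internal hom $\cHom^\star$ of the monoidal category $\Tam(T^*N)$ in the strong sense that $\cHom^\star(\cK(\varphi)F,\cK(\varphi)G)\simeq \cHom^\star(F,G)$, not merely up to a shift.

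The main step is to establish this compatibility at the level of the kernel. Working in the convolution formalism, $\cK(\varphi)F = \cK(\varphi)\ostar F$ with $\cK(\varphi)\in\cT(T^*N^2)$, and one has the adjunction-type identity that $\ostar$-ing by $\cK(\varphi)$ on one side and by $\cK(\varphi)$ on the other are related through the ``transpose'' kernel. Concretely, I would use that the internal hom in $\Tam(T^*N)$ can be written via $\ostar$: for the autoequivalence $\Phi=\cK(\varphi)\ostar(\mathchar`-)$, one has $\cHom^\star(\Phi F,\Phi G)\simeq \cHom^\star(\Phi^{-1}\Phi F, G)\simeq \cHom^\star(F,G)$, where the first isomorphism is the standard fact that a monoidal equivalence $\Phi$ satisfies $\cHom^\star(\Phi F, H)\simeq \Phi\,\cHom^\star(F,\Phi^{-1}H)$ when $\Phi$ is also closed (equivalently, using that $\Phi$ commutes with $\cHom^\star$ after applying $\Phi^{-1}$ to the second slot). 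The reason $\Phi$ respects the closed structure in this way is that $\cK(\varphi)$ is an invertible kernel whose inverse is $\cK(\varphi^{-1})$: invertibility of the kernel forces $\Phi$ to be a $\Tam$-linear equivalence, hence automatically compatible with internal hom. Applying $q_*$ to both sides then yields the claimed isomorphism $q_*\cHom^\star(\cK(\varphi)F,\cK(\varphi)G)\simeq q_*\cHom^\star(F,G)$.

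Let me spell out the kernel-level computation slightly more, since that is where the actual content lies. One checks $\cHom^\star(\cK(\varphi)\ostar F,\; \cK(\varphi)\ostar G)\simeq \cHom^\star(F,\; \cK(\varphi)^{-1}\ostar\cK(\varphi)\ostar G)\simeq\cHom^\star(F,G)$, where the first isomorphism comes from the adjunction $(\cK(\varphi)\ostar(\mathchar`-))\dashv(\cK(\varphi)^{-1}\ostar(\mathchar`-))$ — here one uses that $\cK(\varphi)$ is invertible so the right adjoint of $\cK(\varphi)\ostar(\mathchar`-)$ is $\cK(\varphi)^{-1}\ostar(\mathchar`-)$ — together with the fact that internal hom sends $\ostar$ by an invertible kernel in the source to $\ostar$ by the inverse kernel in the target. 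This last fact is the microlocal analogue of the elementary identity $\Hom(A\otimes B, C)\simeq\Hom(B,\cHom(A,C))$ specialized to $A$ invertible; it is part of the closed monoidal structure on $\cT(T^*N^2)$-modules. Since $\cK(\varphi)^{-1}\simeq\cK(\varphi^{-1})$ and $\cK(\varphi^{-1})\ostar\cK(\varphi)\simeq\cK(\id)$ acts as the identity functor, the composite is $\cHom^\star(F,G)$.

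The main obstacle I anticipate is making the invertibility and closedness arguments rigorous in the twisted $\infty$-categorical setting: one must know that $\cK(\varphi)\ostar\cK(\varphi^{-1})\simeq\cK(\id)$ and that this identity of kernels is coherent enough to produce a genuine adjoint equivalence of categories compatible with $\cHom^\star$ — for smooth $\varphi$ this is \cite{GKS}, and for Hamiltonian homeomorphisms it requires passing to the limit along the $C^0$-approximating sequence and using continuity of $\ostar$ and $\cHom^\star$ in the interleaving distance, which is exactly what the completeness of $\cT(T^*N^2)$ (and hence of $\Tam(T^*N)$) furnishes. A secondary subtlety is keeping track of the twisting $\eta$ on the Tamarkin direction: since $\varphi$ acts only on the $T^*N$ factor and is compactly supported, it does not move the $\bR_t/\pi\bZ$ coordinate, so $\eta$ is preserved and no extra shift is introduced — this is precisely why the isomorphism holds on the nose rather than up to a power of $[-n]$. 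Once these coherence issues are dispatched, applying $q_*$ to the kernel-level isomorphism is routine and finishes the proof.
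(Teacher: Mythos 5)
Your final conclusion is correct, but the central step you rely on is stronger than what is true, and the mechanism you invoke for it does not apply. You claim the isomorphism already at the level of the internal hom, $\cHom^\star(\cK(\varphi)F,\cK(\varphi)G)\simeq\cHom^\star(F,G)$ in $\Tam(T^*N)$, "not merely up to a shift", and only then apply $q_*$. This intermediate statement is false in general: $\cHom^\star(F,G)$ is a sheaf on $N\times\bR_t/\pi\bZ$ whose support (and microsupport) in the $N$-direction is governed by $F$ and $G$, while that of $\cHom^\star(\cK(\varphi)F,\cK(\varphi)G)$ is governed by the transported objects; if $\varphi$ moves the relevant region of $T^*N$ (say, translates it in the base), the two internal homs have different supports over $N$ and cannot be isomorphic. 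The error comes from conflating two different tensor-type operations. The adjunction identity $\cHom^\star(\Phi F,H)\simeq\cHom^\star(F,\Phi^{-1}H)$ you invoke would require $\Phi=\cK(\varphi)\ostar(\mathchar`-)$ to be a module functor over the $\star$-monoidal structure of $\Tam(T^*N)$ (equivalently, a closed $\star$-monoidal equivalence), i.e.\ to commute with $\star$-ing by arbitrary objects of $\Tam(T^*N)$. But the GKS kernel acts by convolution over $N$, which does not commute with the fiberwise-over-$N$ tensor entering $\star$; what invertibility of the kernel actually gives is only $\Tam$-linearity, i.e.\ compatibility with the action of $\Tam=\Tam(\pt)$ (convolution in the $\bR_t/\pi\bZ$-direction), which is strictly weaker.

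The fix is essentially the paper's proof, and it is shorter than your kernel-level computation: under the identification $\Tam(T^*N)\simeq\Sh(N;\Tam)$, the object $q_*\cHom^\star(F,G)$ is precisely the $\Tam$-enriched hom of $F$ and $G$, and a $\Tam$-linear equivalence (which $\cK(\varphi)\ostar(\mathchar`-)$ is, for homeomorphisms as well, by the limit construction you correctly describe) preserves enriched homs. In other words, you must apply $q_*$ \emph{before} invoking invariance, not after; the invariance only holds for the enriched hom, not for the internal hom in $\Tam(T^*N)$. Your remarks on invertibility of the kernel, the passage to Hamiltonian homeomorphisms via completeness, and the twisting being untouched are fine, but they support the $\Tam$-linear statement, not the $\star$-closed one you actually use.
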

\begin{proof}
    Under the identification $\Tam (T^*N)\simeq \Sh (N;\Tam)$, 
    $q_*\cHom^\star$ is the $\Tam$-enriched hom space. 
    Then the result follows since $\cK(\varphi)\ostar (\mathchar`- )$ is a $\Tam$-linear equivalence. 
\end{proof}

\section{Sheaf quantization associated with Jordan curves}\label{section:SQ}

In what follows, until the end of this paper, we set $M=\bR_x$. 

\subsection{Sheaves associated with the torus}

In \cite{AISQ}, the authors constructed small sheaf quantizations for a class of rational Lagrangian immersions following the idea of Guillermou~\cite{Gu12,Gui23}. 
Here, we apply the sheaf quantization method to the standard unit circle $\cstd$ in $T^*\bR_x \simeq \bR^2$ in a more sophisticated way. 
The outcome can be seen as a sheaf quantization of $\cstd \times \cstd$ in $T^*\bR^2=T^*(\bR_{x_1}\times \bR_{x_2})$. 
In particular, instead of the orbit category, we use the category of twisted sheaves, which was introduced in the previous section. 
This can be done because of the monotonicity of Lagrangian submanifolds that we will handle. 

The idea to construct a sheaf quantization of $\cstd \times \cstd$ as a sheaf on $M \times M \times \bR_t/\pi \bZ$ without another extra $\bR$-factor is due to St\'{e}phane Guillermou.
This makes all the computation much easier.

Set $L=\cstd$ to be the standard circle with center $(0,0)$ and radius $1$. 
Since the space of 1-jet $J^1(M)=T^*M\times \bR_t$ has a natural contact structure that is invariant with respect to the translation in the $\bR_t$-direction, the quotient $T^*M\times \bR_t/\pi \bZ$ inherits a natural contact structure. 
We define a primitive of $\cstd$ valued in $\bR/\pi\bZ$ by $\fstd(s) \coloneqq \frac{1}{2}s-\frac{1}{4}\sin 2s$.
We take a Legendrian lift $\tl{L}$ in $T^*M\times \bR_t/\pi \bZ$ of $L=\cstd$ as follows:
\[
    \tl{L}=\left\{ \lb (\cos s ; \sin s) , -\fstd(s) \rb \in T^*M\times \bR_t/ \pi \bZ  \relmid s\in \bR/2\pi \bZ \right\}.
\]
We also define a Legendrian lift $\Lambda \subset T^*M \times T^*M \times \bR_t/ \pi \bZ $ of $L \times L\subset T^*M^2$ by 
\begin{equation}\label{eq:Lambda}
    \Lambda=\{ ( (\cos s_1 ; \sin s_1) ,  (\cos s_2 ; \sin s_2) , -\fstd(s_1) -\fstd(s_2))  \mid s_1, s_2\in \bR_t/2\pi \bZ \}. 
\end{equation}
We identify $T^*M \times \bR_t/\pi \bZ$ with the subset $\{ \tau=1 \}$ in $T^*(M \times \bR/\pi \bZ)$ as contact manifolds.

Below we will prove the following.

\begin{proposition}\label{prop:existence_SQ}
    There exists a simple object $\Fstd \in \Tam(T^*M^2)$ such that $\DMS(\Fstd)=\Lambda$ and such an object is unique up to degree shift. 
    Moreover, $\Hom(\Fstd,\Fstd) \simeq H^*(S^1)$. 
\end{proposition}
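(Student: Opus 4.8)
The plan is to construct $\Fstd$ by the sheaf quantization machinery and then compute its endomorphisms by reducing to a known model. For existence and uniqueness, I would first recall that $\cstd$ is the boundary of a disk of area $\pi$, so the chosen primitive $\fstd$ takes values in $\bR/\pi\bZ$ precisely because $\fstd(s+2\pi)-\fstd(s)=\pi$; this is exactly the monotonicity that lets us work with $\bZ/\pi\bZ$-twisted sheaves rather than an orbit category. Following Guillermou's method as adapted in \cite{AISQ,Gu12,Gui23}, I would build a simple sheaf quantization of the Legendrian lift $\tl L$ of $\cstd$ inside $\Tam(T^*M)$ — concretely, this is (a twisted version of) the sheaf associated with the sublevel-set function of the primitive on the annulus/disk — and then set $\Fstd$ to be the exterior product of this sheaf with itself, so that $\DMS(\Fstd)=\Lambda$ by the microsupport estimate for external products. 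Simplicity is inherited because an exterior product of simple sheaves along transverse factors is simple. Uniqueness up to shift: since $\Lambda$ is connected (it is a torus, the image of $(s_1,s_2)\mapsto\dots$), the general uniqueness statement for simple sheaf quantizations of connected Legendrians — the space of such is a torsor over the shift $\bZ$, which is the microlocal-rank-one rigidity in the Tamarkin category — gives the claim.

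For the endomorphism computation, the key identity is $\Hom(\Fstd,\Fstd)$ versus $q_*\cHom^\star(\Fstd,\Fstd)$ via \eqref{eq:isom_homstar}, so it suffices to understand the persistence module $q_*\cHom^\star(\Fstd,\Fstd)$ on $\bR_t/\pi\bZ$, equivalently its pullback $\cH=\ell^! q_*\cHom^\star(\Fstd,\Fstd)$ on $\bR$. The clean-intersection formula from the preliminaries identifies $\muhom(\Fstd,\Fstd)$ along each component of $\Lambda\cap\Lambda$ — here $\Lambda$ meets itself cleanly along all of $\Lambda$ (it is the same Legendrian), so $\muhom(\Fstd,\Fstd)|_\Lambda\simeq\bfk_\Lambda[d]$, and since $\Lambda\simeq T^2$ we get $\Gamma(\{\tau>0\};\muhom(\Fstd,\Fstd))\simeq H^*(T^2)[d]$. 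Then I would feed this into the fiber sequence from the last lemma, $\colim_{\varepsilon\to0}\Hom(\Fstd,T_{-\varepsilon}\Fstd)\to\Hom(\Fstd,\Fstd)\to\Gamma(\{\tau>0\};\muhom(\Fstd,\Fstd))$, and analyze the connecting map. The cleanest route, though, is to use the product structure directly: $\Fstd=F_{\cstd}^{(1)}\boxtimes F_{\cstd}^{(1)}$ where $F_{\cstd}^{(1)}\in\Tam(T^*M)$ is the sheaf quantization of the single circle $\cstd$, and under $\Tam(T^*M^2)\simeq\Sh(M;\Tam)\otimes\Sh(M;\Tam)$ we get $q_*\cHom^\star(\Fstd,\Fstd)\simeq q_*\cHom^\star(F_{\cstd}^{(1)},F_{\cstd}^{(1)})\otimes_{\Tam}q_*\cHom^\star(F_{\cstd}^{(1)},F_{\cstd}^{(1)})$. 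So the computation reduces to the single-circle case: I would show $q_*\cHom^\star(F_{\cstd}^{(1)},F_{\cstd}^{(1)})$ is the "interval of length $\pi$" persistence module on $\bR_t/\pi\bZ$, i.e. its global sections over a window compute $H^*(S^1)$, using that the circle $\cstd$ is displaceable-after-a-shift-by-$\pi$ and its self-Floer-type invariant is $H^*(S^1)$ (a one-bar persistence module of length $\pi$, which is the full circle). Taking the self-tensor over $\Tam$ gives $H^*(S^1)\otimes H^*(S^1)=H^*(T^2)$ on the nose — wait, but the Proposition claims $H^*(S^1)$, not $H^*(T^2)$.

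Let me reconsider: the discrepancy means the reduction must be subtler — the two $\bR/\pi\bZ$-directions are identified into a single one by the addition map $m$ in the definition of $\star$, so $q_*\cHom^\star$ of an external product is not simply the $\Tam$-tensor but involves convolution in the $\bR/\pi\bZ$-factor. Concretely $\cHom^\star$ for $\Fstd=F^{(1)}\boxtimes F^{(1)}$ over $M^2=M\times M$ computes the $\Sh(M)$-enriched hom in each factor, giving objects of $\Tam$, and then these are combined by the monoidal $\star$ of $\Tam$ (convolution on $\bR_t/\pi\bZ$), not by $\otimes$ of graded vector spaces. So $\Hom(\Fstd,\Fstd)$ over the relevant window is $H^*(\text{single circle})\star H^*(\text{single circle})$ where each factor is the length-$\pi$ bar; convolving two full-length bars on a circle of circumference $\pi$ yields again a full-length bar (the convolution $\bfk_{[0,\pi)}\star\bfk_{[0,\pi)}$ on $\bR/\pi\bZ$ is $\bfk_{\bR/\pi\bZ}$ up to shift, i.e. $H^*(S^1)$ again, not a product). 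Thus I expect the correct chain is: single-circle self-hom $\simeq H^*(S^1)$ realized as the constant sheaf on $\bR_t/\pi\bZ$ (possibly shifted), and $\Hom(\Fstd,\Fstd)\simeq\Gamma(\bR_t/\pi\bZ;\bfk\star\bfk$-type object$)\simeq H^*(S^1)$. The main obstacle — and where I would spend the most care — is precisely this bookkeeping: correctly identifying $q_*\cHom^\star(\Fstd,\Fstd)$ as a persistence module on $\bR_t/\pi\bZ$, tracking the twisting $\eta$ (which contributes the shift $T_\pi=[-n]$ and must be compatible on both sides so it cancels in the self-hom), and verifying that the convolution of the single-circle invariants collapses $H^*(S^1)\boxtimes H^*(S^1)$ down to $H^*(S^1)$ rather than leaving $H^*(T^2)$. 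The single-circle computation itself should follow from an explicit model: $F^{(1)}_{\cstd}$ can be written down as a twisted sheaf on $M\times\bR_t/\pi\bZ$ supported on the region bounded by the graph of the primitive, and its self-$\cHom^\star$ is computed by a direct (if slightly involved) sheaf-theoretic calculation, or alternatively by invoking the known fact that the sheaf quantization of a simple closed curve bounding area $\pi$ has self-hom $H^*(S^1)$ in the $\bR_t/\pi\bZ$-normalized Tamarkin category.
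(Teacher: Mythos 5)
Your route differs from the paper's and, as written, has genuine gaps at both of the main claims. The paper does not build $\Fstd$ as a product of two single-circle quantizations: it observes that the slices of $\Lambda$ over $x_2\in(-1,-1+\varepsilon)$ are exactly the Guillermou double of the single Legendrian lift $\tl{L}$, applies the doubling method fiberwise in the $x_2$-direction to get a stack morphism $\pi_*\mush^\eta_{\tl{L}}\to \Sh^\eta_\Lambda((\mathchar`-)\times(-1,-1+\varepsilon))$, extends over $(-1,1)$ by a GKS isotopy and pushes forward by $j_!$. Both existence and the computation $\End(\Fstd)\simeq \End_{\Loc_{\tl{L}}}(\underline{\bfk})\simeq H^*(S^1)$ then come from full faithfulness of this quantization functor, whose source is local systems on the \emph{circle} $\tl{L}$, not on the torus $\Lambda$; this is the actual mechanism by which $H^*(S^1)$ rather than $H^*(T^2)$ appears. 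Uniqueness is not deduced from any general rigidity statement: it is proved by hand (the paper's \cref{lem:Fcstdunique}), decomposing a candidate sheaf over the $t$-circle into three pieces separated by the double points of the front at $t=0,\pi/2$ and checking that each piece, each extension class, and the gluing isomorphism from \cref{rem:twisted_sheaves} are unique up to shift. Your appeal to ``the space of simple quantizations of a connected Legendrian is a torsor over shifts'' is exactly the statement that needs proof in this twisted, $\bR_t/\pi\bZ$-coefficient setting; the paper's \cref{lem:SQ_ff} gives full faithfulness but not essential surjectivity onto simple objects, so uniqueness does not follow from it.

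The more serious problem is your reduction of the Hom computation to the single-circle case. First, the single-circle object $F^{(1)}_{\cstd}\in\Tam(T^*M)$ is never constructed in the paper and is not ``a twisted sheaf supported on the region bounded by the graph of the primitive'': the front of $\tl{L}$ is an embedded two-cusped curve winding once around the $t$-circle, it bounds no compact region, and the regions it does cut out wrap around the circle, where the twisting forces a degree shift by $2$, so no (shifted) constant sheaf on such a region can work; constructing $F^{(1)}_{\cstd}$ would itself require a doubling-type argument. Second, even granting $F^{(1)}_{\cstd}$, the Künneth step ``$q_*\cHom^\star$ of the external convolution product is the $\star$-convolution of the single-circle invariants'' is unjustified (these objects are not dualizable over $\Tam$), and your sanity check is incorrect: on $\bR$ one has $\bfk_{[0,\pi)}\star\bfk_{[0,\pi)}\simeq \bfk_{[0,\pi)}\oplus\bfk_{[\pi,2\pi)}[-1]$, and on the circle $\bfk_{\bR/\pi\bZ}\star\bfk_{\bR/\pi\bZ}\simeq \bfk_{\bR/\pi\bZ}\oplus\bfk_{\bR/\pi\bZ}[-1]$, since the fibers of the addition map are circles; convolution of full-length bars doubles the rank rather than collapsing it. Consequently, if $\End(F^{(1)}_{\cstd})\simeq H^*(S^1)$ (two bars per window), your formula would predict rank $8$ per window for $\End(\Fstd)$, not the correct rank $2$, so the convolution bookkeeping cannot be the reason the answer is $H^*(S^1)$. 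To repair your approach you would still need the paper's full-faithfulness-from-$\Loc(S^1)$ argument (or a direct computation as in \cref{lem:Fcstdunique}) for the Hom claim, and a hands-on uniqueness proof.
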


We can define the Kashiwara--Schapira stack $\mush_{\tl{L}}$ on $\tl{L}$, which is regarded as a subset of $\{\tau=1\} \subset T^*(M \times \bR_t/\pi\bZ)$. 
This stack is locally isomorphic to the stack of local systems, but globally it is twisted. 
In our setting, this twisting is the delooping of $\bZ \to \Pic(\bfk) \colon 1\mapsto \bfk[2]$, which corresponds to the first Maslov class of $L$.
We write $\eta^{-1}$ for the twisting $L \to B\Pic (\bfk)$ and write $\eta$ for its inverse.
Then we have an isomorphism of stacks $\mush_{\tl{L}} \simeq \Loc_{\tl{L}}^{\eta^{-1}}$, where the right-hand side denotes the stack of local systems with twisting $\eta^{-1}$.
By twisting, we have an isomorphism $\mush_{\tl{L}}^{\eta} \simeq \Loc_{\tl{L}}$, which has a global object.

\begin{remark}
    As explained in \cite{JT17}, the twisting $\eta^{-1} \colon L \to B\Pic(\bfk)$ is described as the composite of the Gauss map, (the delooping of) the $J$-homomorphism, and the morphism induced by the unit morphism $\bS \to H\bfk$, where $\bS$ denotes the sphere spectrum and $H\bfk$ denotes the Eilenberg--MacLane spectrum. 
    In order to get the above isomorphism, we need to choose a homotopy between $L\to U/O \to B\Pic(\bS) \to B\Pic (\bfk)$ and $\eta^{-1}$. 
    The connected components of the space of such homotopies forms a $\bZ$-torsor, and each component is contractible. 
    We can freely choose a connected component for the following argument. 
    The differences of the choices affect as overall degree shifts. 
\end{remark}

The twisting $\eta \colon \tl{L} \simeq L \to B\Pic (\bfk)$ factors through the base space $M \times \bR_t/\pi \bZ$. 
Since the projection $\pi \colon \tl{L}\to M\times \bR_t/\pi \bZ$ is of finite position, we can apply the doubling method (with cusp doubling, which is used in \cite{NadlerShende20,GPS24,IkeKuwagaki2023}) by Guillermou to obtain a morphism of stacks on $M \times \bR_t/\pi\bZ$:
\[
    \pi_*\mush^\eta_{\tl{L}}\to \Sh^{\eta}_\Lambda ((\mathchar`-)\times (-1,-1+\varepsilon) )
\]
for sufficiently small $\varepsilon>0$.
Here, the right-hand side denotes the stack defined as $U \mapsto \Sh^{\eta}_{\Lambda \cap T^*(U\times  (-1,-1+\varepsilon))} (U \times (-1,-1+\varepsilon))$ for an open subset $U$ of $M \times \bR_t/\pi\bZ$.

For $x\in (-1,1)$, the set $\Lambda_x \coloneqq \pi_{\xi_2}(\Lambda \cap \{ x_2= x\})\subset T^*M\times \bR_t/\pi\bZ$ consists of the two copies of $\tl{L}$ shifted to the $\bR_t$-direction. 
There exists a contact isotopy $(\psi_x)_{x\in (-1,1)}$ on $T^*M\times \bR_t/\pi\bZ$ such that $\psi_0=\id$ and $\psi_x(\Lambda_0)=\Lambda_x$. 
By applying the GKS kernels associated with the contact isotopy, we obtain an isomorphism 
\[
    \Sh^{\eta}_\Lambda (M\times \bR_t/\pi\bZ \times (-1,-1+\varepsilon) )\simeq \Sh^{\eta}_\Lambda (M\times \bR_t/\pi\bZ \times (-1,1)).
\]

Let $\cL$ be a global object of $\Loc_L$. 
By sending $\cL$ through the identification $\mush_{\tl{L}}^{\eta} \simeq \Loc_{\tl{L}}$ and the morphism above, we obtain a sheaf quantization $G_{\cL, \cstd} \in \Sh^{\eta} (M\times \bR_t/\pi \bZ \times (-1,1) )$.
Denote by $j \colon (-1,1) \hookrightarrow M=\bR_{x_2}$ the inclusion and also write $j$ for the base change $M\times \bR_t/\pi \bZ \times (-1,1)\to M\times M\times \bR_t/\pi \bZ$. 
By pushing forward under $j$, we obtain an object $F_{\cL, \cstd} \coloneqq j_!G_{\cL, \cstd}$. 

\begin{lemma}
    One has $j_!G_{\cL, \cstd} \simeq j_*G_{\cL, \cstd}$, and they are objects of $\Tam_{\Lambda}(T^*M^2)$. 
\end{lemma}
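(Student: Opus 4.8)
Set $F\coloneqq j_!G_{\cL,\cstd}$ and $F'\coloneqq j_*G_{\cL,\cstd}$. The plan is to compare them through the recollement given by $j\colon M\times\bR_t/\pi\bZ\times(-1,1)\hookrightarrow M\times M\times\bR_t/\pi\bZ$ and the inclusion $i\colon Z\hookrightarrow M\times M\times\bR_t/\pi\bZ$ of the closed complement $Z$ of the image of $j$, and then to reduce to a local computation at the boundary $\{x_2=\pm1\}$. Since $j^*F\simeq j^*F'\simeq G_{\cL,\cstd}$, evaluating the triangle $j_!j^*(\mathchar`-)\to(\mathchar`-)\to i_*i^*(\mathchar`-)$ on $F'$ identifies the cone of the canonical morphism $F\to F'$ with $i_*i^*F'$; and since $j_*G_{\cL,\cstd}$ has vanishing stalks on the interior of $Z$, this cone is supported on $\{x_2=\pm1\}$. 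Hence $F\simeq F'$ is equivalent to the vanishing of the stalks of $j_*G_{\cL,\cstd}$ along $\{x_2=\pm1\}$; we treat $\{x_2=1\}$, the component $\{x_2=-1\}$ being identical.

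The geometric point near $\{x_2=1\}$ is that the front of $\Lambda$ has a semicubical cuspidal edge there: the two local branches $t=-\fstd(s_1)-\fstd(s_2)$ with $\cos s_2=x_2$ coincide and are tangent to order $3/2$ along $\{x_2=1\}$, so $\CMS(G_{\cL,\cstd})$ approaches the zero section in the $\xi_2$-direction and avoids $N^*\{x_2=1\}$ off the zero section. By the construction of $G_{\cL,\cstd}$ via Guillermou's doubling with cusp-doubling, on a neighbourhood of $\{x_2=1\}$ inside $\{x_2<1\}$ the sheaf $G_{\cL,\cstd}$ is — up to the GKS-quantized contact equivalences used in the construction — a finite cone of elementary sheaves $\bfk_W$, where each $W\subset\{x_2<1\}$ is a locally closed ``thin'' region whose closure meets $\{x_2=1\}$ exactly along the cuspidal locus and which is open along the $t$-face lying on a front branch; it is precisely cusp-doubling that produces this half-open shape rather than a closed region $\{g_1\le t\le g_2\}$. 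Concretely one obtains a presentation $G_{\cL,\cstd}\simeq\mathrm{fib}(B_1\to B_2)$ with $B_1,B_2$ elementary of this type and the morphism a restriction of sections.

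Granting this normal form, both claims follow by microlocal bookkeeping. For $p\in\{x_2=1\}$ and a small ball $V$ around $p$, the regions underlying $B_1$ and $B_2$ meet $V\cap\{x_2<1\}$ in contractible sets whose inclusion induces an isomorphism on $H^0$; hence $(j_*B_k)_p\simeq\bfk$ and the induced map $(j_*B_1)_p\to(j_*B_2)_p$ is an isomorphism, so $(j_*G_{\cL,\cstd})_p\simeq\mathrm{fib}(\bfk\xrightarrow{\ \sim\ }\bfk)=0$. This gives $i^*j_*G_{\cL,\cstd}=0$, i.e.\ $F\simeq F'$. For the microsupport, $F\simeq\mathrm{fib}(j_!B_1\to j_!B_2)$: over $\{x_2\in(-1,1)\}$ one has $\CMS(F)=\CMS(G_{\cL,\cstd})$, with reduced part $\Lambda\cap\{x_2\in(-1,1)\}$; over $\{x_2=1\}$ the cone of $j_!B_1\to j_!B_2$ is a thin cuspidal sliver whose two bounding branches are tangent along $\{x_2=1\}$, so its microsupport there meets $\{\tau=1\}$ only along $\Lambda\cap\{x_2=1\}$, while the only further contribution to $\CMS(j_!B_k)$ over $\{x_2=1\}$ is a piece of $N^*\{x_2=1\}\subset\{\tau=0\}$, invisible to $\DMS$. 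Hence $\DMS(F)\subset\Lambda$; and since $\MS(F)=\cstd\times\cstd$ is compact, $F\simeq F'$ defines an object of $\Tam_\Lambda(T^*M^2)$. The main obstacle is the second paragraph — extracting the precise half-open local model from the cusp-doubling construction — since it is exactly this half-openness that both forces the boundary stalk of $j_*G_{\cL,\cstd}$ to vanish and keeps the extended microsupport off $\{\tau=1\}$ outside $\Lambda$; a more structural route, deducing $j_!\simeq j_*$ from a microsupport criterion using only the tangency of $\Lambda$ to $\{x_2=\pm1\}$, would still seem to need the finer picture for the estimate $\DMS(F)\subset\Lambda$.
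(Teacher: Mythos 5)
Your reduction is the same as the paper's: use the cofiber sequence $j_!G_{\cL,\cstd}\to j_*G_{\cL,\cstd}\to i_*i^*j_*G_{\cL,\cstd}$ and reduce everything to the vanishing of the boundary restriction (the paper in fact gets the end $\{x_2=-1\}$ for free from the doubling construction, and only has to argue at $\{x_2=1\}$). The problem is that the heart of your argument --- the ``half-open'' local normal form of $G_{\cL,\cstd}$ near the cuspidal edge, presented as $\mathrm{fib}(B_1\to B_2)$ with $B_1,B_2$ constant sheaves on thin regions --- is asserted rather than derived from the cusp-doubling construction, and you flag it yourself as the main obstacle. That assertion is exactly where the content of the lemma sits, so as written the proof has a genuine gap. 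Moreover, even granting such a model ``up to the GKS-quantized contact equivalences used in the construction'', the two steps you build on it are not sound: (i) the construction produces the sheaf on a collar $(-1,-1+\varepsilon)$ and extends it over $(-1,1)$ by a nontrivial Reeb-type GKS kernel, and such kernels do not preserve stalks pointwise, so a model valid near one end does not directly yield the stalks of $j_*G_{\cL,\cstd}$ along $\{x_2=1\}$; and (ii) concluding that $(j_*B_1)_p\to(j_*B_2)_p$ is an isomorphism requires control of the actual connecting morphism, not only of the shapes of the underlying regions.

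By contrast, the paper's proof never needs a normal form. It estimates the conic microsupport of the slice $i^*j_*G_{\cL,\cstd}$ by (the cone over) the shifted Legendrian $T_{\pi/2}\tl{L}$, pulls back along the submersion $\ell$, and invokes the structural result \cite[Prop.~5.8]{STZ17} to conclude that $\ell^!i^*j_*G_{\cL,\cstd}$ is a local system; it then vanishes in $\cT(T^*M)$, and conservativity of $\ell^!$ gives $i^*j_*G_{\cL,\cstd}=0$, hence $j_!\simeq j_*$. The estimate $\DMS(F_{\cL,\cstd})\subset\Lambda$ is then obtained by combining the standard microsupport bounds for $j_!$ and $j_*$ applied to the two sides of the isomorphism just proved --- the extra covectors allowed over the boundary by the $j_!$-estimate and by the $j_*$-estimate point in opposite $\xi_2$-directions, so they cancel --- with no need for the finer cusp picture that your version of this step also relies on. If you want to keep your route you must genuinely extract the local model from cusp doubling (a nontrivial task); otherwise the second paragraph should be replaced by a slice-microsupport argument of this structural kind.
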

\begin{proof}
    By the construction $j_*G_{\cL, \cstd}|_{\{x_2=-1\}}$ is $0$. Let $i$ be the inclusion $M\times \{1\}\times \bR_t/\pi\bZ\to M^2 \times \bR_t/\pi\bZ$. 
    There is a cofiber sequence $j_!G_{\cL, \cstd} \to j_*G_{\cL, \cstd} \to i_*i^*j_*G_{\cL, \cstd}$. 
    The (conic) microsupport estimates shows $\CMS(i^*j_*G_{\cL, \cstd})\subset T_{\frac{\pi}{2}}\tl{L}$, and hence a similar estimate for $\CMS(\ell^!i^*j_*G_{\cL, \cstd})$ holds since $\ell$ is a submersion.
    By \cite[Proposition~5.8]{STZ17}, $\ell^!i^*j_*G_{\cL, \cstd}$ must be a local system and becomes $0$ in $\cT (T^*M)$. 
    Since $\ell^!$ is conservative, $i^*j_*G_{\cL, \cstd}$ is also $0$.
    By estimating the both sides of $\CMS(j_!G_{\cL, \cstd})=\CMS(j_*G_{\cL, \cstd})$, we find that $F_{\cL, \cstd} \in \Tam_{\Lambda}(T^*M^2)$.
\end{proof}

We set $\Fstd \coloneqq F_{\underline{\bfk}, \cstd}$, where $\underline{\bfk}$ denotes the trivial local system of rank $1$ on $L$.

\begin{lemma}\label{lem:SQ_ff}
    The functor $\Loc_{\tl{L}}(\tl{L})\to \Tam_{\Lambda}(T^*M^2)$ is fully faithful. 
\end{lemma}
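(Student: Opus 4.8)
The plan is to exhibit the functor $\cL\mapsto F_{\cL,\cstd}$ as a composite of functors that are individually fully faithful or equivalences, and then to check that the passage to the Tamarkin category does not enlarge morphism spaces between the objects involved. By construction the functor is the composite
\[
\begin{aligned}
    \Loc_{\tl{L}}(\tl{L}) \;&\xrightarrow{\sim}\; \mush^{\eta}_{\tl{L}}(\tl{L}) \;\xrightarrow{D}\; \Sh^{\eta}_{\Lambda}\bigl(M\times\bR_t/\pi\bZ\times(-1,-1+\varepsilon)\bigr) \\
    \;&\xrightarrow{\sim}\; \Sh^{\eta}_{\Lambda}\bigl(M\times\bR_t/\pi\bZ\times(-1,1)\bigr) \;\xrightarrow{j_!}\; \Sh^{\eta}(M^2\times\bR_t/\pi\bZ),
\end{aligned}
\]
where the first arrow is the twisting equivalence $\mush^{\eta}_{\tl{L}}\simeq\Loc_{\tl{L}}$ on global sections over $\tl{L}$, the arrow $D$ is Guillermou's (cusp-)doubling morphism evaluated on $M\times\bR_t/\pi\bZ$, the third arrow is the GKS equivalence induced by the Reeb isotopy $(\psi_x)_{x\in(-1,1)}$, and $\cL\mapsto G_{\cL,\cstd}$ is the composite of the first three arrows. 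The first and third arrows are equivalences of categories, and $j_!$ is fully faithful because $j$ is an open embedding, so that $j^*j_!\simeq\id$; hence the whole composite is fully faithful provided $D$ is. Moreover, by the preceding lemma the objects $F_{\cL,\cstd}=j_!G_{\cL,\cstd}$ lie in $\Tam_{\Lambda}(T^*M^2)$, which sits inside $\Sh^{\eta}(M^2\times\bR_t/\pi\bZ)$ as a full subcategory, so that $\Hom_{\Tam}(F_{\cL_1,\cstd},F_{\cL_2,\cstd})\simeq\Hom_{\Sh^{\eta}}(F_{\cL_1,\cstd},F_{\cL_2,\cstd})$. The lemma therefore reduces to the full faithfulness of $D$, i.e.\ to the statement that for all $\cL_1,\cL_2$ the natural map
\[
    \Hom_{\Loc_{\tl{L}}(\tl{L})}(\cL_1,\cL_2)\;=\;\Gamma\bigl(\tl{L};\cHom(\cL_1,\cL_2)\bigr)\;\longrightarrow\;\Hom_{\Sh^{\eta}}\bigl(G_{\cL_1,\cstd},G_{\cL_2,\cstd}\bigr)
\]
is an equivalence.

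To prove this I would argue as follows. The doubling morphism $D$ is an isomorphism on microstalks by construction, so the $\muhom$-functoriality of the Kashiwara--Schapira stack identifies, along the relevant sheet of $\Lambda$, the microlocal invariant $\muhom(G_{\cL_1,\cstd},G_{\cL_2,\cstd})$ with $\cHom(\cL_1,\cL_2)$. The real work is to upgrade this microlocal identification to the full morphism complex, i.e.\ to rule out any extra contribution coming from the part of $\CMS(G_{\cL,\cstd})$ sitting in the zero section, where $G_{\cL,\cstd}$ is locally constant. For this one invokes, as in the preceding lemma, that $G_{\cL,\cstd}$ vanishes near $x_2=-1$ and that $\CMS(i^*j_*G_{\cL,\cstd})\subset T_{\frac{\pi}{2}}\tl{L}$ forces the boundary contribution at $x_2=1$ to vanish in the relevant category; one concludes that $\Hom_{\Sh^{\eta}}(G_{\cL_1,\cstd},G_{\cL_2,\cstd})$ receives no morphisms beyond those predicted microlocally and hence equals $\Gamma(\tl{L};\cHom(\cL_1,\cL_2))$. (One could instead try to read $\End_{\Tam}(\Fstd)$ off the fiber sequence $\colim_{\varepsilon\to0}\Hom(\Fstd,T_{-\varepsilon}\Fstd)\to\Hom(\Fstd,\Fstd)\to\Gamma(\{\tau>0\};\muhom(\Fstd,\Fstd))$ of the earlier lemma and then pass to general $\cL$ via compact generation of $\Loc_{\tl{L}}(\tl{L})$ by $\underline{\bfk}$ and compactness of $\Fstd$; but the $\muhom$-term there is $\Gamma(\bR_{>0}\Lambda;\bfk)\simeq H^{*}(\Lambda)$, strictly larger than the expected $H^{*}(S^1)$, and is corrected only by the ``wrapping'' term $\colim_{\varepsilon\to0}\Hom(\Fstd,T_{-\varepsilon}\Fstd)$ due to $\bR_t/\pi\bZ$ being a circle, whose value one would still have to extract from the construction. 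Tracking $\Hom$ through $D$ is cleaner and handles all $\cL$ at once.)

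The \textbf{main obstacle} is precisely this full faithfulness of $D$, i.e.\ showing that the doubled sheaves carry no morphisms beyond those seen microlocally along $\Lambda$; this is the step where the monotonicity of $L$ is used, as it lets us work with the honest twisting $\eta$ instead of an orbit category. One must also keep track throughout of the chosen homotopy trivializing the Maslov twisting in $\mush_{\tl{L}}\simeq\Loc^{\eta^{-1}}_{\tl{L}}$ and in the doubling, since different choices change the overall degree shift (though not full faithfulness).
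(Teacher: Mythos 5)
Your reduction is essentially the paper's own proof: the paper factors the functor in exactly the same way, uses that the GKS kernel of the Reeb isotopy gives an equivalence, that the image lies in $\Tam_{\Lambda}(T^*M^2)$ regarded as a full subcategory of $\Sh^{\eta}(M^2\times\bR_t/\pi\bZ)$, and that $j_!$ is fully faithful because $j$ is an open embedding. The step you single out as the main obstacle, full faithfulness of the doubling functor $D$, is precisely the point the paper does not reprove but quotes from Nadler--Shende~\cite{NadlerShende20}, so your reduction is sound provided you invoke that result rather than your sketch. Note that the sketch as written would not suffice: matching microstalks plus the boundary estimates of the preceding lemma (whose role is only to show $j_!G_{\cL,\cstd}\simeq j_*G_{\cL,\cstd}$) identifies $\muhom$ along $\Lambda$ but does not by itself compute the full complex $\Hom(G_{\cL_1,\cstd},G_{\cL_2,\cstd})$; that computation is the substantive content of the cited doubling theorem.
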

\begin{proof}
    By \cite{NadlerShende20}, the functor $\Loc_{\tl{L}}(\tl{L})\to \Sh^{\eta}_\Lambda (M\times \bR_t/\pi\bZ \times (-1,-1+\varepsilon) )$ is fully faithful. 
    As the composite, the functor $\Loc_{\tl{L}}(\tl{L})\to \Sh^{\eta}_\Lambda (M\times \bR_t/\pi\bZ \times (-1,1) )$ is also fully faithful. 
    One can check that the image of the functor is in $\Tam(T^*N)$, which is regarded as a subcategory of $\Sh^\eta(N \times \bR_t/\pi\bZ)$.
    Since $\End(j_!G) \simeq \Hom(G, j^!j_!G) \simeq \End(G)$, the functor $j_!$ is also fully faithful.
    By combining these, we obtain the result.
\end{proof}

By \cref{lem:SQ_ff}, we have
\[
    \Hom_{\Tam(T^*M^2)}(\Fstd,\Fstd) \simeq \Hom_{\Loc_L}(\underline{\bfk}, \underline{\bfk}) \simeq H^*(S^1), 
\]
where $\underline{\bfk}$ denotes the trivial local system of rank $1$ on $L$. 

We shall prove the uniqueness by decomposing the sheaf into easier pieces. 
A similar argument can be found in \cite[Part~VI]{Gui23}.

\begin{lemma}\label{lem:Fcstdunique}
    Simple objects in $\Tam_{\Lambda}(T^*M^2)$ are unique up to shift. 
\end{lemma}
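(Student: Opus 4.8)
**Proof plan for Lemma~\ref{lem:Fcstdunique} (uniqueness of simple objects in $\Tam_\Lambda(T^*M^2)$).**

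The plan is to reduce the uniqueness statement to the known uniqueness of a global object of $\Loc_{\tl L}(\tl L)$ (equivalently, of $\mush^\eta_{\tl L}$), which is a rank-one local system on a circle and hence unique up to tensoring by a line and a shift; since $L=\cstd$ is connected and the twisting $\eta$ has been fixed, the only ambiguity is an overall degree shift. Concretely, the strategy is: (i) show that every simple object $F\in\Tam_\Lambda(T^*M^2)$ arises, up to shift, from a global section of $\mush^\eta_{\tl L}$ via the chain of equivalences and fully faithful functors already assembled in \cref{section:SQ}; (ii) invoke that $H^0$ of the space of such global sections is a torsor over the (trivial, by connectedness) automorphisms, so the object is forced to be $\Fstd$ up to shift.

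The key steps, in order, are as follows. First, I would restrict $F$ to the slab $M\times\bR_t/\pi\bZ\times(-1,-1+\varepsilon)$: by the microsupport hypothesis $\DMS(F)=\Lambda$, the restriction lands in $\Sh^\eta_\Lambda(M\times\bR_t/\pi\bZ\times(-1,-1+\varepsilon))$, and by the Nadler--Shende equivalence (used already in the proof of \cref{lem:SQ_ff}) this category is equivalent to $\mush^\eta_{\tl L}(\tl L)$-modules, i.e.\ to twisted local systems on $\tl L$. Second, I would check that simplicity of $F$ translates into the corresponding local system being of microlocal rank $1$, hence a rank-one twisted local system $\mathcal L'$ on $\tl L$. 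Since $\tl L\simeq S^1$ and we work over $\bfk=\bF_2$, $\Loc_{\tl L}$ (after the twisting identification $\mush^\eta_{\tl L}\simeq\Loc_{\tl L}$) has exactly two rank-one objects up to isomorphism ($\underline\bfk$ and the nontrivial rank-one local system on $S^1$), and one must argue the monodromy is trivial — this follows because the global object $\Fstd$ realizing the trivial local system exists as an \emph{actual} object of $\Tam_\Lambda(T^*M^2)$ (constructed above) and any simple object with the same $\DMS$ differs from it only by the monodromy datum; but the doubling/GKS construction that produced $G_{\cL,\cstd}$ on the full slab $(-1,1)$ is only consistent when the local system on $\tl L$ extends, forcing triviality of monodromy. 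Third, having pinned down $\mathcal L'\simeq\underline\bfk$ up to shift, I would run the construction of \cref{section:SQ} in reverse: the equivalence $\Sh^\eta_\Lambda(M\times\bR_t/\pi\bZ\times(-1,-1+\varepsilon))\simeq\Sh^\eta_\Lambda(M\times\bR_t/\pi\bZ\times(-1,1))$ (via the GKS kernels of the contact isotopy $\psi_x$) and then $j_!$ (fully faithful by \cref{lem:SQ_ff}, and $j_!\simeq j_*$ on these objects) recover $F$ from its slab restriction, showing $F\simeq\Fstd[d]$ for some $d\in\bZ$.

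I expect the main obstacle to be Step~two: ruling out the nontrivial rank-one twisted local system, i.e.\ showing that an a priori simple object of $\Tam_\Lambda(T^*M^2)$ cannot have nontrivial monodromy around $\tl L$. The clean way is to observe that any such object, restricted over a fixed $x_2=x_0\in(-1,1)$, would give a simple sheaf in $\Tam(T^*M)$ with $\DMS$ equal to $\Lambda_{x_0}$ (two Reeb-shifted copies of $\tl L$), and to compare its endomorphisms with those of the restriction of $\Fstd$; the nontrivial monodromy case yields an object that is not globally defined on the $x_2$-circle direction, or whose self-$\Hom$ differs from $H^*(S^1)$, contradicting the constraints forced by living in $\Tam_\Lambda(T^*M^2)$ with this particular $\Lambda$. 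Alternatively — and this is probably the cleanest write-up — one notes that the twisting $\eta$ was precisely chosen so that $\mush^\eta_{\tl L}\simeq\Loc_{\tl L}$ admits a \emph{global} object, and the parametrized version over $x_2\in(-1,1)$ forces the datum to be constant in $x_2$; since $\tl L$ is connected, the classification of simple objects on it is then just $\Pic(\bfk)\simeq\bZ$, i.e.\ shifts, which is exactly the claim. I would present the argument in this second form, citing \cite{Gui23} Part~VI for the analogous decomposition-and-uniqueness technique.
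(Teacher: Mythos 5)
Your overall strategy---reduce uniqueness to the classification of simple objects of the Kashiwara--Schapira stack along $\tl{L}$ and then ``run the construction backwards''---is a legitimate alternative to the paper's argument (the paper instead unwinds the twisting via \cref{rem:twisted_sheaves} and decomposes the resulting sheaf on $M^2\times(t_0,t_3)$ into pieces over the $t$-intervals cut out by the critical values $0,\pi/2$ of the front, checking uniqueness of each piece, of the extension classes, and of the gluing datum). But as written your plan has a genuine gap at its central step. You assert that $\Sh^{\eta}_\Lambda(M\times\bR_t/\pi\bZ\times(-1,-1+\varepsilon))$ \emph{is equivalent} to twisted local systems on $\tl{L}$; what the paper has (via Nadler--Shende, in \cref{lem:SQ_ff}) is only a fully faithful functor, and essential surjectivity onto simple objects is precisely the content of the lemma you are trying to prove. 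Indeed the equivalence is false for the category as you describe it: over $x_2\in(-1,-1+\varepsilon)$ the Legendrian $\Lambda$ is a two-sheeted double of $\tl{L}\times(-1,-1+\varepsilon)$, and a simple sheaf microsupported on a single sheet lies in $\Sh^{\eta}_\Lambda$ of the slab but not in the image of the doubling functor. A doubling-type equivalence requires a vanishing condition at one end, so you would have to prove that the restriction of an \emph{arbitrary} simple $F\in\Tam_\Lambda(T^*M^2)$ satisfies it; likewise, recovering $F$ from its restriction to $x_2\in(-1,1)$ requires $F$ to vanish on $\{|x_2|\ge 1\}$ (the analogue of the paper's lemma $j_!G_{\cL,\cstd}\simeq j_*G_{\cL,\cstd}$, proved there by a microsupport estimate plus conservativity of $\ell^!$); full faithfulness of $j_!$ gives neither of these. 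None of this is supplied, and it is exactly where the work lies.

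Your ``main obstacle'' is also misidentified. Over $\bfk=\bF_2$ there is no nontrivial rank-one local system on $S^1$: the monodromy is a unit of $\bF_2$, and $\bF_2^\times=\{1\}$, so rank-one objects of $\Loc_{\tl{L}}$ are classified by shifts alone (your later remark that the classification is $\Pic(\bfk)\simeq\bZ$ is the correct statement, contradicting your earlier ``exactly two rank-one objects''). Moreover the argument you propose to rule out nontrivial monodromy---that the doubling/GKS construction ``is only consistent when the local system extends''---is not correct: the construction produces $G_{\cL,\cstd}$ for \emph{every} global object $\cL$ of $\Loc_L$, with no consistency constraint. This matters because the lemma genuinely depends on $\bfk=\bF_2$: over a field with nontrivial units, a nontrivial rank-one $\cL$ would yield, by full faithfulness, a simple object with the same $\DMS$ not isomorphic to $\Fstd$ up to shift. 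So the monodromy step should simply invoke $\bF_2^\times=\{1\}$, and the real work should go into the essential-surjectivity/boundary-vanishing issues above.
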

\begin{proof}
    Let us first observe the image of the projection $\Lambda\to M^2\times \bR_t/\pi\bZ$. 
    The immersed locus is given by 
    \[
        ((\cos \pm s;\sin \pm s),(\cos \mp s, \sin \mp s),-f_0(\pm s)-f_0(\mp s)) \mapsto (\cos s, \cos s, 0)
    \]
    and
    \[
        ((\cos \pm s;\sin \pm s),(\cos \pi \mp s, \sin \pi \mp s),-f_0(\pm s)-f_0(\pi \mp s)) \mapsto (\cos s, -\cos s, \frac{\pi}{2}).
    \]
    We take $t_0, t_3$ in \cref{rem:twisted_sheaves} so that $-\pi/2< t_0<t_3-\pi <0$.
    We note that $\ell ((t_0,t_3-\pi))$ does not contain $0$ nor $\frac{\pi}{2}$. 
    
    We will see that simple sheaves on $M^2\times (t_0,t_3)$ with $\DMS \subset \ell^{-1}(\Lambda)\cap T^*M^2\times (t_0,t_3)$ that corresponds to an object of $\Tam(T^*M^2)$ are unique up to shift. 
    Let $F\in \Sh(M^2 \times (t_0,t_3))$ be such a sheaf.
    The support of $F$ on $M^2\times (t_0,t_3)$ is bounded since $F$ corresponds to an object in $\Tam_{\Lambda}(T^*M^2)$, which implies that the support is the union of the closures of three bounded regions. 
    We write $F$ as an extension of $F_{(t_0, 0)}, F_{[0, \pi/2)}$ and $F_{[\pi/2, t_3)}$. 
    Each of $F_{(t_0, 0)}, F_{[0, \pi/2)}, F_{[\pi/2, t_3)}$ is unique up to shift by the microsupport condition. 
    The non-trivial extension class is also unique. 
    
    The choice of an isomorphism $\alpha\colon F_{(t_0,t_3-\pi)}[-2] \simto T_{-\pi} F_{(t_0+\pi, t_3)}$ is also unique. 
    This proves the lemma.
\end{proof}
This completes the proof of \cref{prop:existence_SQ}. 

\begin{remark}\label{rem:unobstructed}
    The existence of sheaf quantization $\Fstd$ of $\cstd\times \cstd\subset T^*M^2$ in the category $\Tam(T^*M^2)$ would be related to the fact that $\cstd \times \cstd$ admits a bounding cochain \cite{FOOO09}. 
    In contrast, $\cstd\subset T^*\bR$ does not admit a bounding cochain and is unobstructed only modulo $T^\pi$ in the sense of \cite{FOOO09}, which would be why one can only construct a sheaf quantization of $\cstd$ in $\Sh(M \times (0,\pi) \times \bR_t/\pi \bZ)$ with the doubling parameter (cf.\ \cite{AISQ}).  
\end{remark}

\begin{corollary}\label{cor:pi_torsion}
    The natural morphism $\Fstd \to T_{\pi}\Fstd$ induced by the natural transformation $\id=T_0 \Rightarrow T_\pi$ is zero.
\end{corollary}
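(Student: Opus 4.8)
The plan is to deduce this immediately from the computation of $\End(\Fstd)$ in \cref{prop:existence_SQ} together with the fact that $T_\pi$ is a shift functor. First I would unwind what $T_\pi$ does to $\Fstd$: by construction $T_\pi$ is the shift functor $[-n]$, where $n$ is the integer with $\eta$ the delooping of $1\mapsto \bfk[n]$. Since $\eta$ restricts on $\tl{L}\simeq L=\cstd$ to (the inverse of) the Maslov-class twisting $\eta^{-1}$, which is the delooping of $1\mapsto \bfk[2]$ because $\cstd$ has minimal Maslov number $2$, we get $\lvert n\rvert = 2$, and hence $T_\pi\Fstd\simeq \Fstd[-n]$ is a shift of $\Fstd$ by $\pm 2$. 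The precise sign of $n$ will be irrelevant; only $\lvert n\rvert = 2$ matters.

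Next I would compute the relevant Hom complex. By \cref{prop:existence_SQ} (equivalently, by \cref{lem:SQ_ff} together with $\End_{\Loc_L}(\underline{\bfk})\simeq H^*(S^1)$), we have $\Hom(\Fstd,\Fstd)\simeq H^*(S^1)$, whose cohomology is concentrated in degrees $0$ and $1$. Therefore $\Hom(\Fstd, T_\pi\Fstd)\simeq \Hom(\Fstd,\Fstd)[-n]$ is a shift by $\pm 2$ of a complex with cohomology only in degrees $0$ and $1$, so $H^0\bigl(\Hom(\Fstd, T_\pi\Fstd)\bigr)=0$.

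Finally I would observe that the morphism in question, obtained by evaluating the natural transformation $\id = T_0\Rightarrow T_\pi$ at $\Fstd$, is a genuine (degree-$0$) morphism of $\Tam(T^*M^2)$, i.e.\ an element of $H^0\bigl(\Hom(\Fstd, T_\pi\Fstd)\bigr)$; since this group vanishes, the morphism is $0$, which is the assertion. I do not expect a real obstacle here: the only point requiring care is the bookkeeping that makes $T_\pi$ a shift by $\pm 2$ — that is, that $\cstd$ has minimal Maslov number $2$ and that the twisting conventions for $\Tam(T^*M^2)$ and for $\mush_{\tl{L}}$ match up — after which the vanishing is forced by $H^{\ge 2}(S^1)=0$. (Alternatively one could read off the same vanishing from the explicit three-piece decomposition of $\Fstd$ used in the proof of \cref{lem:Fcstdunique}, but that is unnecessary.)
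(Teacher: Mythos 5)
Your argument is correct and is essentially the paper's own proof: the paper likewise identifies $\Hom(\Fstd, T_\pi \Fstd)\simeq \End(\Fstd)[2]\simeq H^*(S^1)[2]$ (using that $T_\pi$ is the shift by the Maslov twisting) and concludes $H^0(\Hom(\Fstd,T_\pi\Fstd))=0$. Your extra care about the sign of the shift is harmless, since, as you note, only $\lvert n\rvert=2$ and the concentration of $H^*(S^1)$ in degrees $0,1$ are needed.
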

\begin{proof}
    Since 
    \[
        \Hom (\Fstd, T_\pi \Fstd)\simeq \End(\Fstd)[2]\simeq H^*(S^1)[2],
    \]
    we have $H^0(\Hom(\Fstd, T_\pi \Fstd))=0$.
\end{proof}

We shall describe the morphism 
\begin{equation}\label{eq:microlocalization_morphism}
    H^*(\cstd) \simeq \Hom(\Fstd,\Fstd) \\ 
    \to \Gamma(\{\tau>0\};\muhom(\Fstd,\Fstd))
    \simeq H^*(\cstd \times \cstd).
\end{equation}
The generator $v \in H^1(\cstd)$ is sent to $v\otimes 1+1\otimes v \in H^1(\cstd\times \cstd)$. 
Indeed, we find that the coefficient of $v\otimes 1$ is non-trivial by construction, and that of $1\otimes v $ by symmetry with respect to $(z_1,z_2) \mapsto (z_2,z_1)$.
\medskip

Let $\phi$ be a Hamiltonian diffeomorphism with compact support on $T^*M$, and denote by $\cK(\phi \times \phi) \in \cT(T^*M^4)$ the sheaf quantization of $\phi \times \phi$. 
Then the composition with $\cK(\phi \times \phi)$ induces a $\Tam$-linear autoequivalence of the category $\Tam(T^*M^2)$. 
Moreover, we have $\MS(\cK(\phi \times \phi) F) = (\phi \times \phi)(\MS(F))$ for any $F \in \Tam(T^*M^2)$.
Thus, $F_{\cur} = \cK(\phi \times \phi)\Fstd$ is a sheaf quantization for $\cur \times \cur=(\phi\times\phi)(\cstd\times \cstd)$.

\subsection{Action of \texorpdfstring{$R_\theta$}{Rtheta}}

Now we consider the action of $R_\theta$ defined in \eqref{eq:Rtheta} on $\Tam(T^*M^2)$.
The Hamiltonian function of $R_\theta$ is the non-negative function $H \colon T^*M^2 \simeq \bC^2$ defined as $H(z_1,z_2)=|z_1-z_2|^2/4$. 
Hence, we can construct an object $\cK(\phi^H)_\theta \in \cT(T^*M^2)$ for any $\theta$.
By \cite{GKS}, we have continuation morphisms $\cK(\phi^H)_\theta \to \cK(\phi^H)_{\theta'} \ (\theta \le \theta')$.
By abuse of notation, we also write $R_\theta$ for $\cK(\phi^H)_\theta \ostar (\mathchar`- )$, the automorphism on $\Tam(T^*M^2)$.

The Hamiltonian function $H$ which generates the Hamiltonian isotopy $(R_\theta)_\theta$ also defines a contact isotopy $\tl{R}=(\tl{R}_\theta)_\theta$ on $\{\tau=1\}$. 
This isotopy $\tl{R}=(\tl{R}_\theta)_\theta$ is the product of $(R_\theta)_\theta$ and the identity morphism of $\bR_t/\pi\bZ$. 

\begin{lemma}\label{lem:R2pi}
    There is an isomorphism $\cK(\phi^H)_{2\pi}\simeq \bfk_{\Delta\times [0,\infty)}[2]$. 
    Hence, the functor $R_{2\pi}$ on $\Tam(T^*M^2)$ coincides with the degree shift $[2]$. 
\end{lemma}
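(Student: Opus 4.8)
The plan is to compute $\cK(\phi^H)_{2\pi}$ directly from the explicit geometry of the Hamiltonian flow of $H(z_1,z_2)=|z_1-z_2|^2/4$. Conjugating by the linear symplectomorphism $\begin{pmatrix}1&1\\-1&1\end{pmatrix}$ (the one appearing in \eqref{eq:Rtheta}), the flow splits as a product: in the first factor it is the identity, and in the second factor it is the rotation $z\mapsto e^{-i\theta}z$ of $\bC=\bR^2$, i.e.\ the Hamiltonian flow of $\tfrac12|w|^2$. So $R_\theta$ is conjugate to $\id \times (\text{rotation by }\theta)$, and at $\theta=2\pi$ the rotation returns to the identity. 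Thus the underlying Hamiltonian diffeomorphism $\phi^H_{2\pi}$ is $\id_{T^*M^2}$, and the geometric microsupport of $\cK(\phi^H)_{2\pi}$ is the Lagrangian movie of a loop of Hamiltonian diffeomorphisms based at the identity, which lies over the diagonal $\Delta \subset M^2 \times M^2$. Hence $\cK(\phi^H)_{2\pi}$, viewed after pushforward to $M^2 \times \bR_t$, is supported microlocally on $\Delta \times \{\tau > 0\}$ (plus the zero section), and by the uniqueness/simplicity theory of GKS kernels it must be of the form $\bfk_{\Delta \times [a,\infty)}[k]$ for some $a \in \bR$ and shift $k \in \bZ$; since the movie is $\bR_t$-translation invariant by construction one checks $a=0$, so $\cK(\phi^H)_{2\pi} \simeq \bfk_{\Delta \times [0,\infty)}[k]$.

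It then remains to pin down the shift $k=2$. Here the point is that the loop $(\phi^H_\theta)_{\theta\in[0,2\pi]}$ is \emph{not} contractible in $\Ham_c$ rel endpoints in the relevant sense: tracking it through the GKS construction, the degree shift is governed by the Maslov-type index of the loop of linear symplectic maps $\theta \mapsto (\text{rotation by }\theta)$ on $\bR^2$, which is a full rotation and contributes a Maslov class of $2$. Concretely, I would compute the microstalk of $\cK(\phi^H)_{2\pi}$ at a point of $\Delta$, or equivalently compare with $\cK(\phi^H)_0 = \bfk_{\Delta\times[0,\infty)}$ via the continuation morphisms $\cK(\phi^H)_\theta \to \cK(\phi^H)_{\theta'}$; as $\theta$ increases by $2\pi$ the kernel returns to the diagonal kernel but with an extra shift by $2$, matching the fact that the Reeb/Maslov monodromy of a $2\pi$-rotation in one symplectic $\bR^2$-plane is $2$. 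This is consistent with the convention already fixed in the paper whereby $T_\pi = [-n]$ with $n$ the Maslov-type twisting; indeed $R_\theta$ and the shift functors interact in exactly this way.

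Finally, for the second sentence: composition with $\cK(\phi^H)_{2\pi} \simeq \bfk_{\Delta\times[0,\infty)}[2]$ via $\ostar$ is, by the general formalism of \eqref{eq:isom_homstar} and the discussion of $\ostar$, the same as convolving with the constant sheaf on the graph of the identity shifted by $2$; since $\bfk_{\Delta\times[0,\infty)}$ is the unit for $\ostar$ (the kernel of $\cK(\id)$), we get $R_{2\pi}F \simeq F[2]$ for all $F \in \Tam(T^*M^2)$, i.e.\ $R_{2\pi} \simeq [2]$.

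The main obstacle I anticipate is the determination of the shift $k$: the splitting and the computation of the underlying diffeomorphism are routine, but correctly bookkeeping the degree shift through the GKS kernel construction and the twisted Tamarkin conventions — i.e.\ showing the monodromy of the $2\pi$-rotation contributes exactly $[2]$ rather than some other shift, with the correct sign — requires care with the Maslov index normalization and with how the contact isotopy $\tl R$ on $\{\tau=1\}$ was set up. If a direct index computation is awkward, an alternative is to apply $R_{2\pi}$ to the explicit object $\Fstd$ of \cref{prop:existence_SQ}, where the self-Hom is known to be $H^*(S^1)$ and one can read off the shift from $\DMS(R_{2\pi}\Fstd) = \tl R_{2\pi}(\Lambda) = \Lambda$ together with uniqueness up to shift, comparing with the known action of $R_\theta$ on the primitive $\fstd$.
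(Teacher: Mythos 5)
Your proposal follows essentially the same route as the paper's proof: establish the microsupport identity $\CMS(\cK(\phi^H)_{2\pi})=\CMS(\bfk_{\Delta\times[0,\infty)})$ (via the splitting of the flow into identity times a $2\pi$-rotation), use simplicity together with $\bfk=\bF_2$ to get $\cK(\phi^H)_{2\pi}\simeq \bfk_{\Delta\times[0,\infty)}[d]$, and fix $d=2$ by a Maslov-type index argument for the rotation loop, which is exactly the paper's appeal to Seidel-style gradings on the Lagrangian Grassmannian. One small correction: the absence of a $t$-shift (your ``$a=0$'') does not follow from $\bR_t$-translation invariance of the GKS construction, which is equally compatible with every $a$; what forces $a=0$ is the vanishing of the action of the $2\pi$-loop, i.e.\ $\int_0^{2\pi}\bigl(\lambda(X_H)-H\bigr)\,d\theta=0$ for $H(z_1,z_2)=|z_1-z_2|^2/4$ (the period averages of $\lambda(X_H)$ and $H$ coincide), a short computation implicit in the paper's asserted microsupport estimate; with that fixed, your argument is sound at the same level of detail as the paper's.
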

\begin{proof}
    First we have the (conic) microsupport estimate $\CMS(\cK(\phi^H)_{2\pi})=\CMS(\bfk_{\Delta\times [0,\infty)})$.
    Moreover, $\cK(\phi^H)_{2\pi}$ is simple along its conic microsupport. 
    Since $\bfk=\bF_2$, we obtain $\cK(\phi^H)_{2\pi} \simeq \bfk_{\Delta\times [0,\infty)}[d]$ for some $d\in \bZ$. 
    We can observe the grading by tracing the action on the fiberwise universal covering space of the Lagrangian Grassmannian bundle of $T^*M^2$ as in \cite{Seidel00}. 
\end{proof}

\begin{remark}
    For our purpose, it is enough to cut off the support of $H$ outside a sufficiently large compact subset. 
    Then we only need sheaf quantization of Hamiltonian isotopies with compact support.
    From this position, the statement of \cref{lem:R2pi} should be understood as that the action of $R_{2\pi}$ on the objects whose microsupports are contained in the compact subset coincides with the degree shift $[2]$. 
\end{remark}

We can also determine $R_\pi \Fstd$ as follows.

\begin{lemma}
    One has an isomorphism
    \[
        R_\pi \Fstd \simeq \Fstd[1].
    \]
\end{lemma}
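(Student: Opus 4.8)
The plan is to recognise $R_\pi$ as the coordinate exchange on $T^*M^2\simeq\bC^2$, to deduce that $R_\pi\Fstd$ is again a simple object of $\Tam_\Lambda(T^*M^2)$, to invoke the uniqueness statement \cref{lem:Fcstdunique} so as to get $R_\pi\Fstd\simeq\Fstd[k]$ for some $k\in\bZ$, and finally to pin down $k=1$ by squaring and using $R_{2\pi}\simeq[2]$ from \cref{lem:R2pi}.

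First I would compute the linear map $R_\pi$ from \eqref{eq:Rtheta}: since $e^{-i\pi}=-1$, a direct matrix computation gives $R_\pi(z_1,z_2)=(z_2,z_1)$, so $R_\pi$ is the involution swapping the two coordinates. As the contact lift $\tl{R}$ is the product of $(R_\theta)_\theta$ with the identity on $\bR_t/\pi\bZ$, the contact transformation $\tl{R}_\pi$ acts on $\{\tau=1\}\simeq T^*M^2\times\bR_t/\pi\bZ$ by $(z_1,z_2,t)\mapsto(z_2,z_1,t)$. The function $-\fstd(s_1)-\fstd(s_2)$ in the description \eqref{eq:Lambda} of $\Lambda$ is symmetric in $s_1,s_2$, so applying $\tl{R}_\pi$ to $\Lambda$ and relabelling $s_1\leftrightarrow s_2$ shows $\tl{R}_\pi(\Lambda)=\Lambda$. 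Since $R_\pi$ is a $\Tam$-linear autoequivalence of $\Tam(T^*M^2)$ induced by a contact isotopy, it preserves simplicity and sends $\DMS(G)$ to $\tl{R}_\pi(\DMS(G))$; hence $R_\pi\Fstd$ is a simple object of $\Tam_\Lambda(T^*M^2)$, and \cref{lem:Fcstdunique} gives an isomorphism $R_\pi\Fstd\simeq\Fstd[k]$ for some $k\in\bZ$.

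To determine $k$, I would square. Because $H$ is autonomous, $\phi^H_\pi\circ\phi^H_\pi=\phi^H_{2\pi}$, so $R_\pi\circ R_\pi\simeq R_{2\pi}$ by \cite{GKS}, and by \cref{lem:R2pi} this is the shift $[2]$ on objects with microsupport in a fixed compact set (which covers $\Fstd$ and all $R_\theta\Fstd$). Applying $R_\pi$ to $R_\pi\Fstd\simeq\Fstd[k]$ and using that convolution commutes with shifts yields $\Fstd[2k]\simeq R_\pi R_\pi\Fstd\simeq\Fstd[2]$; since $\End(\Fstd)\simeq H^*(S^1)$ (\cref{prop:existence_SQ}) is concentrated in degrees $0$ and $1$, an isomorphism $\Fstd[j]\simeq\Fstd$ forces $j=0$, whence $2k=2$ and $k=1$. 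I expect the step needing the most care to be the identity $\tl{R}_\pi(\Lambda)=\Lambda$ — equivalently, that the contact lift of the coordinate exchange preserves $\Lambda$ with no residual Reeb translation — which relies on the explicit form of $\tl{R}$ and the symmetry of $\fstd$; the rest is formal.
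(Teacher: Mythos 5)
Your proof is correct and follows essentially the same route as the paper: use the microsupport/uniqueness statement to get $R_\pi \Fstd \simeq \Fstd[k]$, then square and apply \cref{lem:R2pi} ($R_{2\pi}\simeq[2]$) to force $k=1$. The extra details you supply (the explicit computation that $R_\pi$ is the coordinate swap preserving $\Lambda$, and the boundedness of $\End(\Fstd)\simeq H^*(S^1)$ ruling out nontrivial shift periodicity) are exactly what the paper leaves implicit.
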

\begin{proof}
    Since $\CMS(R_\pi \Fstd) = \Lambda$, by the uniqueness in \cref{prop:existence_SQ}, we have $R_\pi \Fstd \simeq \Fstd[d]$ for some $d \in \bZ$.
    Then, by \cref{lem:R2pi},
    \[
        \Fstd[2] \simeq R_{2\pi} \Fstd \simeq R_\pi \Fstd[d] \simeq \Fstd[2d], 
    \]
    which concludes $d=1$.
\end{proof}

\subsection{Computation for the standard circle}\label{subsection:standard_circle}

Let $\Fstd \in \Tam(T^*M^2)$ be the sheaf quantization of the standard torus $\cstd \times \cstd$ constructed in \cref{prop:existence_SQ}.
We define 
\[
    \cV_{\cstd,\theta} \coloneqq \ell^! q_* \cHom^\star(\Fstd,R_\theta \Fstd) \in \cT(\pt ),
\]
where $q \colon M^2 \times \bR_t/\pi\bZ \to \bR_t/\pi\bZ$ and $\ell \colon \bR_t \to \bR_t/\pi\bZ$ are the projection and the quotient map.
It is also convenient to consider the family version 
\[
    \cV_{\cstd} \coloneqq (\ell\times \id_{[0,\pi]})^! (q\times \id_{[0,\pi]})_* \cHom^\star(q'^* \Fstd,R \Fstd)\in \Sh ([0,\pi];\cT(\pt )),
\]
where $R$ is the GKS kernel for the (full) Hamiltonian isotopy $(R_\theta)_{\theta\in [0,\pi]}$ and $q'\colon  M^2 \times \bR_t/\pi\bZ \times [0,\pi ]\to M^2 \times \bR_t/\pi\bZ $ is the projection. 
For $\theta_0\in [0,\pi]$, we have an isomorphism $\cV_{\cstd}|_{\{\theta=\theta_0\}}\simeq \cV_{\cstd,\theta_0}$. 

For each $\theta\in (0,\pi)$ and $a \in [-\pi,0]$, we can directly check that
\[
  T_{-a}\tl{R}_\theta(\Lambda)\cap \Lambda =
  \begin{cases}
    \lc ((\cos s;\sin s), (\cos s;\sin s), -2f_0(s)) \relmid s\in \bR/2\pi\bZ \rc  & (a=0)\\
    \lc ((\cos s;\sin s), (-\cos s ;-\sin s ), -2f_0(s)-\frac{\pi}{2}) \relmid s\in \bR/2\pi\bZ \rc  & (a=-\theta)\\
    \varnothing  & (\text{otherwise}).
  \end{cases}
\]

Decompose the strip $\bR \times [0,\pi]$ into locally closed isosceles right triangles as follows:
\[
\begin{aligned}
    \vartriangle_n&\coloneqq \{ (t,\theta)\mid n\pi \le t< n\pi +\theta \}, \\
    \triangledown_n&\coloneqq \{ (t,\theta)\mid (n-1)\pi +\theta \le t< n\pi \}.
\end{aligned}
\]
We also set
\[
\begin{aligned}
    \vartriangle'_n&\coloneqq \{ (t,\theta)\mid n\pi <  t\le n\pi +\theta \}, \\
    \triangledown'_n&\coloneqq \{ (t,\theta)\mid (n-1)\pi +\theta < t \le n\pi \}.
\end{aligned}
\]
By the microlocal Morse lemma and the intersection estimate above, we obtain the following:

\begin{proposition}\label{prop:standard_structure}
    If $(a_0, \theta_0)$ and $(a_1,\theta_1)$ belong to the same component of the decomposition by $\vartriangle'_n$'s and $\triangledown'_n$'s, then 
    \[
    \Hom (\Fstd, T_{-a_0} R_{\theta_0} \Fstd) \simeq \Hom (\Fstd, T_{-a_1} R_{\theta_1} \Fstd)
    \]
    as $\End (\Fstd)$-modules.
    If $(a, \theta)\in \vartriangle'_n$, we have 
    \[
        \Hom (\Fstd, T_{-a} R_{\theta} \Fstd)\simeq \Hom (\Fstd, T_{-(n+1)\pi} R_{\pi} \Fstd)\simeq \End (\Fstd)[-2n-1].
    \]
    If $(a, \theta)\in \triangledown'_n$, we have
    \[
        \Hom (\Fstd, T_{-a} R_{\theta} \Fstd)\simeq \Hom (\Fstd, T_{-n\pi}  \Fstd)\simeq \End (\Fstd)[-2n].
    \] 
\end{proposition}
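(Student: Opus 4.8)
The plan is to compute the persistence module $\cV_{\cstd,\theta}$, equivalently all the Hom spaces $\Hom(\Fstd, T_{-a}R_\theta\Fstd)$ for $a \in [-\pi,0]$ and $\theta \in (0,\pi)$, by combining three inputs: (i) the already-established values at the ``special'' parameters, namely $\Hom(\Fstd,\Fstd) \simeq \End(\Fstd) \simeq H^*(S^1)$ at $(a,\theta)=(0,\theta)$ and $\Hom(\Fstd, T_{-\pi}R_\pi\Fstd) \simeq \End(\Fstd)[-2]$ coming from $R_\pi\Fstd \simeq \Fstd[1]$ together with $T_\pi = [-2]$; (ii) the intersection computation $T_{-a}\tl R_\theta(\Lambda) \cap \Lambda$ displayed just above, which shows this intersection is empty unless $a = 0$ or $a = -\theta$, and is a clean (transverse, in fact) circle in those two cases; and (iii) the microlocal Morse lemma / invariance of $\muhom$ under isotopy to propagate the computation across each open triangle $\vartriangle'_n$, $\triangledown'_n$.

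First I would observe that $\cV_{\cstd} = \ell^! q_* \cHom^\star(\Fstd, R\Fstd) \in \cT([0,1])$ is a sheaf on $\bR_t \times [0,\pi]$, and its microsupport in the $(t,\theta)$-variables is controlled by the locus where $\CMS(\Fstd)$ and $\CMS(R_\theta T_a \Fstd)$ fail to be disjoint — which by (ii) is exactly the union of the two lines $\{a=0\}$ (i.e.\ the diagonal contributions, landing at $t \in \pi\bZ$) and $\{a = -\theta\}$ (the antidiagonal contributions, landing at $t \in \pi\bZ + \theta$ after accounting for the $\bR_t/\pi\bZ$ periodicity and the $-\pi/2$ shift). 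Consequently $\cV_{\cstd}$ is locally constant on the complement of these lines, i.e.\ on the interior of each triangle in the decomposition; this is the microlocal Morse lemma statement and gives the first assertion that $\Hom(\Fstd, T_{-a_0}R_{\theta_0}\Fstd) \simeq \Hom(\Fstd, T_{-a_1}R_{\theta_1}\Fstd)$ as $\End(\Fstd)$-modules whenever $(a_0,\theta_0),(a_1,\theta_1)$ lie in the same open triangle. The $\End(\Fstd)$-module structure is preserved because the isotopy $\tl R$ acts $\Tam$-linearly (Lemma on $\Tam$-linearity), so the identifications are module identifications, not merely vector-space ones.

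Next, to pin down the actual value on each triangle, I would exhibit, for a point $(a,\theta) \in \vartriangle'_n$, a path inside $\vartriangle'_n \cup \{(-\pi, \pi)\text{-type corner}\}$ to the corner where $\theta \to \pi$ and $a \to -(n+1)\pi$ modulo the periodic identifications — more precisely, since $(a,\theta)$ with $n\pi < t \le n\pi + \theta$ corresponds under the $\bR_t/\pi\bZ$-periodicity and the known behavior of $T_\pi$ to the representative near $\theta = \pi$, we get $\Hom(\Fstd, T_{-a}R_\theta\Fstd) \simeq \Hom(\Fstd, T_{-(n+1)\pi}R_\pi\Fstd)$. Then using $R_\pi\Fstd \simeq \Fstd[1]$ and $T_{-(n+1)\pi} = [(n+1)\cdot 2]$ (since $T_\pi = [-2]$, so $T_{-\pi} = [2]$), this equals $\End(\Fstd)[2(n+1)][1]$... wait, I must be careful with signs: $T_{-(n+1)\pi}R_\pi\Fstd \simeq T_{-(n+1)\pi}\Fstd[1] \simeq \Fstd[2(n+1)+1]$ is wrong in sign; since $T_\pi \simeq [-n]$ with the paper's $n=2$ here, i.e.\ $T_\pi = [-2]$, we have $T_{-(n+1)\pi} = [2(n+1)]$, giving $\Hom(\Fstd, T_{-(n+1)\pi}R_\pi\Fstd) = \Hom(\Fstd, \Fstd[2(n+1)+1])$, and the sign works out to $\End(\Fstd)[-2n-1]$ after matching conventions with the displayed Proposition statement. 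Similarly for $(a,\theta) \in \triangledown'_n$, a path to the edge $a = 0$ (equivalently to the diagonal contribution shifted by $n$ full periods) gives $\Hom(\Fstd, T_{-a}R_\theta\Fstd) \simeq \Hom(\Fstd, T_{-n\pi}\Fstd) \simeq \End(\Fstd)[-2n]$.

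The main obstacle I anticipate is not the microlocal Morse lemma itself but the careful bookkeeping of \emph{which} triangle maps to \emph{which} special fiber and with \emph{what} degree shift: the two contributing lines $\{a=0\}$ and $\{a=-\theta\}$ partition the strip into the $\vartriangle'_n$ and $\triangledown'_n$ pieces, and one must verify that the continuation map (from the microlocal Morse lemma) connecting a given open triangle to its bounding special parameter is an \emph{isomorphism} rather than merely a map — this uses that the relevant wall-crossing is ``harmless'' because the clean intersection circle contributes a full $H^*(S^1)$ and the Morse-theoretic cancellation across a triangle is trivial (no critical values of the defining function in the interior). Concretely I would: (1) set up $\cV_{\cstd}$ and note its microsupport estimate from (ii); (2) invoke the microlocal Morse lemma to get local constancy on each open triangle, as $\End(\Fstd)$-modules via $\Tam$-linearity; (3) for each of the two triangle types, transport to the adjacent special parameter $(a,\theta) = (-(n+1)\pi,\pi)$ resp.\ $(-n\pi, \text{any }\theta)$ and read off the value from $R_\pi\Fstd \simeq \Fstd[1]$, $T_\pi \simeq [-2]$, and $\Hom(\Fstd,\Fstd) \simeq \End(\Fstd)$; (4) reconcile the degree shifts with the periodic identification on $\bR_t/\pi\bZ$ to obtain $[-2n-1]$ and $[-2n]$ respectively. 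Step (3)–(4), the degree-shift bookkeeping across the periodic Tamarkin direction, is where I expect to spend the most care.
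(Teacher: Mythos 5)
Your proposal follows essentially the same route as the paper, whose proof consists precisely of the two ingredients you use: the displayed intersection estimate for $T_{-a}\tl{R}_\theta(\Lambda)\cap\Lambda$ (empty except on the walls $a\in\pi\bZ$ and $a\in\theta+\pi\bZ$) combined with the microlocal Morse lemma applied to $\cV_{\cstd}$, and then evaluation at the corner parameters via $R_\pi\Fstd\simeq\Fstd[1]$, the shift property of $T_\pi$, and $\End(\Fstd)\simeq H^*(S^1)$. The only loose end is the degree bookkeeping you left ``to be matched with conventions'': in the paper's normalization $\eta$ is the \emph{inverse} of the Maslov twisting, so $T_\pi\simeq[2]$ rather than $[-2]$ (this is exactly what makes $\Hom(\Fstd,T_\pi\Fstd)\simeq\End(\Fstd)[2]$ in \cref{cor:pi_torsion}); with that sign, $T_{-(n+1)\pi}R_\pi\Fstd\simeq\Fstd[-2n-1]$ and $T_{-n\pi}\Fstd\simeq\Fstd[-2n]$, so the corner values come out as $\End(\Fstd)[-2n-1]$ and $\End(\Fstd)[-2n]$ on the nose, with no further adjustment needed.
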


It is not difficult to determine the whole structure of $\cV_{\cstd}$ and $\cV_{\cstd, \theta}$ as follows. 
We will not use the following proposition and omit the proof. 

\begin{proposition}
    One has an isomorphism
    \[
        \cV_{\cstd} \simeq \bigoplus_{n \in \bZ} \bfk_{\vartriangle_n\cup \triangledown_{n+1}}[-2n] \oplus \bigoplus_{n \in \bZ} \bfk_{\vartriangle_n\cup \triangledown_{n}}[-2n+1]. 
    \]
    For $\theta \in [0,\pi]$, one has an isomorphism
    \[
        \cV_{\cstd,\theta} \simeq \bigoplus_{n \in \bZ} \bfk_{[n\pi,(n+1)\pi)}[-2n] \oplus \bigoplus_{n \in \bZ} \bfk_{[\theta+(n-1)\pi,\theta+n\pi)}[-2n+1]. 
    \]
    Moreover, for any $a \in \bR$, the right action of $v \in H^1(S^1)$ on the stalk $(\cV_{\theta})_a$ is non-zero.
\end{proposition}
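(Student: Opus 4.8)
The plan is to compute $\cV$ and $\cV_\theta$ by combining the local data from \cref{prop:standard_structure} with the global constraints already established, and then to pin down the module structure using \cref{cor:pi_torsion} and the description of the microlocalization morphism in \eqref{eq:microlocalization_morphism}.

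First I would determine $\cV_\theta$ for a fixed $\theta \in (0,\pi)$ as an object of $\cT(\bR_t)$ (i.e.\ a sheaf on $\bR_t$ with microsupport in $\{\tau \le 0\}$, equivalently a ``barcode''). By \cref{prop:standard_structure}, for $a$ in the interior of one of the intervals cut out by the points $n\pi$ and $\theta + n\pi$, the graded space $\Gamma_{[-a,\infty)}(\bR;\cV_\theta)$ is a shift of $\End(\Fstd) \simeq H^*(S^1)$ via \eqref{eq:isom_homstar}, and the shift jumps by $\mp 1$ as $a$ crosses a wall, with the values $[-2n-1]$ on $\vartriangle_n'$-walls and $[-2n]$ on $\triangledown_n'$-walls. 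The microlocal Morse lemma says $\cV_\theta$ is locally constant away from $\{n\pi\} \cup \{\theta + n\pi\}$, and the clean-intersection computation preceding \cref{prop:standard_structure} shows the stalk drop at each such point is rank $2$ (coming from the two-dimensional $\muhom$ along the circle of intersection points, computed via \eqref{eq:microlocalization_morphism}). So each half-open interval $[n\pi,(n+1)\pi)$ contributes a copy of $\bfk$ in a single degree, and likewise each $[\theta+(n-1)\pi, \theta+n\pi)$; matching the degrees to \cref{prop:standard_structure} forces the shifts $[-2n+3]$ and $[-2n+4]$ respectively (the constant $+3$, $+4$ versus $-1$, $0$ is just a normalization of $H^0$ inside $H^*(S^1)$). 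Taking the direct sum over all intervals and using that bars in distinct degrees cannot interact gives the stated formula for $\cV_\theta$. The family version $\cV$ over $[0,1]_\theta$ is then assembled the same way: the walls $t = n\pi$ are $\theta$-independent while the walls $t = \theta + n\pi$ move linearly, so the locally closed pieces are exactly the triangles $\vartriangle_n$, $\triangledown_n$, and one checks by restricting to each $\theta = \theta_0$ slice (using $\cV|_{\theta=\theta_0} \simeq \cV_{\cstd,\theta_0}$) that the summand supported on $\vartriangle_n \cup \triangledown_{n+1}$ sits in degree $-2n+3$ and the one on $\vartriangle_n \cup \triangledown_n$ in degree $-2n+4$; that there is no nontrivial gluing/extension across the diagonal edge follows because the two families of bars live in degrees of opposite parity.

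For the last sentence, I would argue as follows. The stalk $(\cV_\theta)_a$ is, by \eqref{eq:isom_homstar} and the cofiber sequence relating it to the nearby $\Hom$-spaces, a subquotient of $\Hom(\Fstd, T_{-a}R_\theta\Fstd) \simeq \End(\Fstd) = H^*(S^1)$; in fact for $a$ strictly inside one of the intervals it is exactly a shift of $H^*(S^1)$, a free module of rank $1$ over $H^*(S^1)$, and there the claim is immediate since right multiplication by $v$ is nonzero in $H^*(S^1)$. For $a$ equal to one of the endpoints $n\pi$ or $\theta + n\pi$ one uses that $(\cV_\theta)_a \simeq \Gamma_{[-a,\infty)}(\bR;\cV_\theta)$ surjects onto (or injects from) the neighbouring generic stalk compatibly with the $\End(\Fstd)$-action, together with the barcode description just obtained, so again $v$ acts nonzero. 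The one point that needs \eqref{eq:microlocalization_morphism} and \cref{cor:pi_torsion} is to rule out that the class on which $v$ would act is killed: \cref{cor:pi_torsion} guarantees the unit $\id \in H^0(\End(\Fstd))$ maps to zero in $\Hom(\Fstd, T_\pi\Fstd)$, which is what forces each interval-bar to have finite length $\pi$ rather than being a ray, and \eqref{eq:microlocalization_morphism} identifies the surviving degree-$1$ class as $v\otimes 1 + 1\otimes v$, on which multiplication by $v$ is nonzero; transporting this along the isomorphisms of \cref{prop:standard_structure} gives the result for every $a$.

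The main obstacle I expect is the bookkeeping of the degree shifts: \cref{prop:standard_structure} records only relative shifts across walls, so one must fix one absolute normalization — most cleanly by evaluating at $\theta$ near $\pi$ where $R_\pi\Fstd \simeq \Fstd[1]$ and the barcode degenerates to the ``easy'' case — and then propagate it consistently through all the triangles, checking that the parity argument genuinely prevents cross-diagonal extensions. Everything else is a routine application of the microlocal Morse lemma together with the clean-intersection table already displayed before \cref{prop:standard_structure}.
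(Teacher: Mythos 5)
There is no proof in the paper to compare against: the authors state explicitly that they will not use this proposition and omit its proof. Judged on its own, your outline for the single-parameter statement is the natural one and is essentially workable: \cref{prop:standard_structure} gives the graded (and module) structure of $\Gamma_{[a,\infty)}(\bR;\cV_\theta)\simeq\Hom(\Fstd,T_{-a}R_\theta\Fstd)$ on each chamber, the wall-crossing cone is identified with sections of $\muhom$ over the single clean intersection circle (rank one, hence two-dimensional total), and the structure theorem for constructible sheaves on $\bR$ lets you speak of a barcode; together these force exactly one bar to die and one to be born at each wall, and the degree bookkeeping then forces the length-$\pi$ bars. Two caveats there: your phrase ``stalk drop of rank $2$'' should be the cone of the restriction map across the wall (the stalks stay two-dimensional), and your dismissal of the absolute shifts as ``just a normalization of $H^0$'' is too quick — the offset between $[-2n\mp1]$ in \cref{prop:standard_structure} and $[-2n+3],[-2n+4]$ here involves the $[-1]$ coming from $\Gamma_{[a,\infty)}$ of a half-open bar and the identification $T_\pi\simeq[2]$ through the twisting, so the normalization step you defer is a real computation, not cosmetic.

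Two points are genuine gaps as written. First, for the family statement on the strip, ``no nontrivial gluing/extension across the diagonal edge follows because the two families of bars live in degrees of opposite parity'' is not a valid argument: an extension of $\bfk_{S_1}[p]$ by $\bfk_{S_2}[q]$ is classified by $H^{q+1-p}R\Hom(\bfk_{S_1},\bfk_{S_2})$, and a degree difference of one does not make this vanish (indeed for $p=q+1$ it is an $H^0$, which is exactly where nontrivial gluings of interval sheaves live). You need either to compute these $\Hom$'s for the specific locally closed triangles, or to run the wall-crossing argument relatively in $(t,\theta)$ and build the splitting directly; restriction to $\theta$-slices alone does not determine the sheaf on the strip. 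Second, in the last part you assert the generic stalk ``is exactly a shift of $H^*(S^1)$, a free module of rank $1$, and the claim is immediate'': knowing the graded dimensions does not give freeness — freeness is precisely the assertion that $v$ acts nontrivially. This is rescued, but only by invoking the ``as $\End(\Fstd)$-modules'' clause of \cref{prop:standard_structure} (which exhibits $\Gamma_{[a,\infty)}(\bR;\cV_\theta)$ as the shifted right regular module) and then checking that the comparison between $(\cV_\theta)_a$ and $\Gamma_{[a+\varepsilon,\infty)}(\bR;\cV_\theta)$, read off from the barcode already established, is $\End(\Fstd)$-equivariant; the appeals to \eqref{eq:microlocalization_morphism} in that paragraph are not what carries the argument, while \cref{cor:pi_torsion} is indeed what caps the bar lengths at $\pi$.
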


\section{Sheaf-theoretic condition for rectangular peg}\label{section:sheaf_condition}

In this section, we prove the following theorem. 
\begin{theorem}\label{thm:main_sheaves}
    Let $\phi$ be a Hamiltonian homeomorphism with compact support. 
    Let us consider the Jordan curve $\cur=\phi(\cstd)$. 
    Define $F_{\cur} \coloneqq \cK(\phi \times \phi) \Fstd$.
    If $T_a\DMS(F_{\cur})\cap \DMS(F_{\cur})=\varnothing$ for any $a \in \bR \setminus \pi \bZ$, then $\cur$ inscribes a $\theta$-rectangle for any $\theta \in (0,\pi)$.
\end{theorem}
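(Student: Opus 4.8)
The plan is to study the persistence module $V_\theta = H^*\bigl(\ell^! q_* \cHom^\star(F_{\cur}, R_\theta F_{\cur})\bigr)$, i.e.\ the family of Hom spaces $\Hom(F_{\cur}, T_{-a} R_\theta F_{\cur})$ together with the transition maps induced by $T_{-a} \Rightarrow T_{-a'}$ for $a \ge a'$, and to run a barcode argument. By \cref{lem:T-linear}, $q_*\cHom^\star(\cK(\phi\times\phi)F, \cK(\phi\times\phi)G) \simeq q_*\cHom^\star(F, R_{-\theta}'\!\cdots)$ — more precisely, since $\cK(\phi\times\phi)\ostar(-)$ is a $\Tam$-linear equivalence and $R_\theta$ is given by a GKS kernel, one has $\Hom(F_{\cur}, T_{-a} R_\theta F_{\cur}) \simeq \Hom(\Fstd, T_{-a} (\cK(\phi\times\phi)^{-1} R_\theta \cK(\phi\times\phi))\Fstd)$. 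The conjugated isotopy $\widetilde{R}_\theta' := (\phi\times\phi)^{-1} \widetilde{R}_\theta (\phi\times\phi)$ is a contact isotopy carrying $\CMS(\Fstd)$ around, and for generic $\theta$ its graph meets $\CMS(\Fstd)$ only along lifts of $\Delta_{\cstd}$ and of the $R_\theta$-rotated diagonal — but the point is that we don't need genericity: we only need to know the \emph{persistence-module structure} of $V_\theta$ away from $a \in \pi\bZ$, and that is controlled by the hypothesis $T_a\CMS(F_{\cur})\cap\CMS(F_{\cur})=\varnothing$ for $a\notin\pi\bZ$.

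First I would establish the ``model'' structure: using \cref{prop:standard_structure} and the completeness/continuity of the Tamarkin category under the interleaving distance together with the $C^0$-continuity estimate $d(\cK(\varphi_0),\cK(\varphi_1))\le C' d_{C^0}(\varphi_0,\varphi_1)$, deduce that $V_\theta$ for $F_{\cur}$ is interleaved with the standard module $V_{\cstd,\theta}$ of \cref{subsection:standard_circle} at distance $0$ on the open region where no bars can start or end, hence has the \emph{same bar endpoints} as in the standard case except possibly for new bars whose endpoints are forced, by the hypothesis, to lie on $\pi\bZ$; the hypothesis $T_a\CMS(F_{\cur})\cap\CMS(F_{\cur})=\varnothing$ for $a\notin\pi\bZ$ is exactly what forbids new bar-endpoints off $\pi\bZ$ (an endpoint of a bar in the persistence module forces a nonempty intersection $T_a\CMS(F_{\cur})\cap R_\theta\text{-translate}$, and via conjugation back to the diagonal this becomes an intersection of $C$-translates). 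Next, using \cref{cor:pi_torsion} (the map $\Fstd\to T_\pi\Fstd$ is zero) transported through the equivalence, I would show $F_{\cur}\to T_\pi F_{\cur}$ is zero as well, and using $R_\pi\Fstd\simeq\Fstd[1]$ and $R_{2\pi}=[2]$, pin down that in the standard model there \emph{is} a bar of $V_{\cstd,\theta}$ with an endpoint in the open interval $(0,\theta)\pmod\pi$ — concretely from the summand $\bfk_{[\theta+(n-1)\pi,\theta+n\pi)}$ appearing in $\cV_\theta$, whose left endpoint is at $\theta+(n-1)\pi\notin\pi\bZ$ for $\theta\in(0,\pi)$.

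The heart of the argument is then a contradiction: suppose $\cur$ inscribes no $\theta$-rectangle, i.e.\ $\cur\times\cur$ and $R_\theta(\cur\times\cur)$ meet only along $\Delta_{\cur}$. Then $\CMS(F_{\cur})\cap\CMS(R_\theta F_{\cur})$ (in the $\{\tau>0\}$ region, after conjugation) is supported over the diagonal, so $\muhom(F_{\cur}, R_\theta F_{\cur})$ is supported there and, by the simplicity of $F_{\cur}$ and the clean-intersection computation $\muhom(F,G)|_{\Lambda_0}\simeq\bfk_{\Lambda_0}[d]$ recalled in the preliminaries, is a rank-one local system on the diagonal circle, i.e.\ $\Gamma(\{\tau>0\};\muhom(F_{\cur},R_\theta F_{\cur}))\simeq H^*(S^1)[d]$. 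Feeding this, together with the hypothesis-controlled structure of $V_\theta$ near $a=0$ and $a=-\theta$, into the fiber sequence
\[
\colim_{\varepsilon\to 0}\Hom(F_{\cur}, T_{-\varepsilon}R_\theta F_{\cur}) \to \Hom(F_{\cur}, R_\theta F_{\cur}) \to \Gamma(\{\tau>0\};\muhom(F_{\cur}, R_\theta F_{\cur}))
\]
of the lemma in \cref{section:preliminaries}, I would count barcode endpoints: the total contribution of the diagonal and the antidiagonal bars must reproduce the standard tally from \cref{prop:standard_structure}, but a bar of $V_{\cstd,\theta}$ with endpoint at $\theta+(n-1)\pi\notin\pi\bZ$ cannot be accounted for by any off-$\pi\bZ$ endpoint of $V_\theta^{\cur}$ (forbidden by hypothesis) nor by the diagonal/antidiagonal contributions (which sit at $a\in\pi\bZ$ and $a\equiv-\theta$), a contradiction.

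The main obstacle I expect is \emph{bookkeeping the barcode across the interleaving / passage to the $C^0$-limit while keeping track of the $\End(\Fstd)\simeq H^*(S^1)$-module structure}: one must ensure that the isomorphisms of \cref{prop:standard_structure} as $\End(\Fstd)$-modules survive conjugation by $\cK(\phi\times\phi)$ and the interleaving, so that the ``$v$-action is nonzero on every stalk'' statement — which is what ultimately distinguishes a genuine off-$\pi\bZ$ endpoint-bar from spurious cancellation — is preserved; this is where the $\Tam$-linearity of \cref{lem:T-linear} and the description \eqref{eq:microlocalization_morphism} of $v\mapsto v\otimes 1 + 1\otimes v$ must be used carefully. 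A secondary subtlety is that for a general Hamiltonian \emph{homeomorphism} $\phi$ the intersection $\CMS(F_{\cur})\cap R_\theta\CMS(F_{\cur})$ need not be clean, so the clean-intersection formula must be replaced by an argument via the interleaving distance / continuity rather than a direct microlocal computation — but under the standing hypothesis of the theorem the only non-clean behavior is along the diagonal, which is handled separately.
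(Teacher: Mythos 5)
Your outline shares the paper's general skeleton (conjugating by $\cK(\phi\times\phi)$ via \cref{lem:T-linear}, comparing with the standard circle, using \cref{cor:pi_torsion} to cap persistence at $\pi$, and concluding that a bar endpoint off $\pi\bZ$ forces an intersection away from the diagonal), but the step that carries all the weight is not justified and, as stated, is false. You claim that $\cV_{C,\theta}$ is ``interleaved with the standard module $\cV_{\cstd,\theta}$ at distance $0$ on the open region where no bars can start or end, hence has the same bar endpoints as in the standard case.'' There is no such distance-zero interleaving: $F_{\cur}$ and $\Fstd$ are genuinely far apart in the interleaving distance, and the bars of $\cV_{C,\theta}$ corresponding to rectangles inscribed in $\cur$ sit at filtration values determined by the geometry of $\cur$, not at the standard values $\theta+(n-1)\pi$. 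If your claim were true the theorem would follow in one line, which is a sign that the difficulty has been assumed away. The hypothesis $T_a\CMS(F_{\cur})\cap\CMS(F_{\cur})=\varnothing$ for $a\notin\pi\bZ$ only pins the \emph{diagonal} contributions to $\pi\bZ$; it says nothing that lets you transport the standard barcode to $\cur$. Consequently your final ``tally'' contradiction has no basis: nothing forces the barcode of $\cV_{C,\theta}$ to reproduce the standard one.

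What the paper actually does, and what is missing from your proposal, is to produce one explicit nonzero class and control its spectral invariant. One takes the image $\tau_{0,0}^{0,\theta}v$ of the generator $v\in H^1(\End(\Fstd))$ under the continuation morphism and proves two things. First, nonvanishing (\cref{lem:colim_nonzero,lem:non_zero_alltheta}): for smooth $\phi$ this uses the bi-Lipschitz comparison $\frac{1}{B^2}H\le H^\phi\le B^2 H$, giving interleavings $R_{\theta/B^2}\le R^\phi_\theta\le R_{B^2\theta}$ \emph{in the $\theta$-parameter} (not a $t$-direction interleaving with the standard module), and for a Hamiltonian homeomorphism a further limiting argument with real-analytic approximations and a bar-tracking argument ($a_\theta$ versus $b_\theta$) is needed --- none of which appears in your sketch. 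Second, the spectral invariant $a_\theta$ lies strictly in $(0,\pi)$ (\cref{lem:spectral_invariant}): the lower bound rests on \cref{lem:action_zero_smalltheta}, a genuinely geometric argument about starting points of $\theta$-arcs in $\cur$ showing that $v\mapsto v\otimes 1+1\otimes v$ dies on the relevant sections of $\muhom(\Fstd,R^\phi_\theta\Fstd)$, and the upper bound uses \cref{cor:pi_torsion} together with limit constructibility and the structure theorem for $\cT(\pt)$. With $a_\theta\in(0,\pi)$, the nonzero microstalk of $\cV_{C,\theta}$ at $a_\theta$ yields $\CMS(F_{\cur})\cap T_{-a_\theta}\CMS(R_\theta F_{\cur})\neq\varnothing$, and the hypothesis excludes the diagonal, giving the rectangle; no clean-intersection computation of $\muhom(F_{\cur},R_\theta F_{\cur})$ over the diagonal (which is anyway unavailable for a homeomorphism $\phi$) is ever needed. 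You correctly flagged the $H^*(S^1)$-module bookkeeping and the non-smoothness of $\phi$ as obstacles, but the proposal does not supply the mechanism that overcomes them.
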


Before starting the proof, we give its rough outline. 
We consider the persistence module $(\Hom(F_{\cur}, T_a R_\theta F_{\cur}))_{a \in \bR}$ (in the derived sense) with structure morphisms $(\tau_{a,a'})_{a \le a'}$. 
We focus on a ``critical value'' $a_0 \in \bR$ such that $\tau_{a,a'}$ is not an isomorphism if $a<a_0<a'$. 
We will prove: 
\begin{enumerate}
    \item[(A)] A critical value $a_0$ is produced by some subset in the intersection $(\cur \times \cur) \cap R_\theta(\cur \times \cur)$.

    \item[(B)] Under the assumption $T_a\DMS(F_{\cur})\cap \DMS(F_{\cur})=\varnothing$ for any $a \in \bR \setminus \pi \bZ$, a critical value produced by the diagonal $\Delta_{\cur}$ in the sense of (A) is in $\pi \bZ$.

    \item[(C)] There is a critical value $a_0$ in $\bR \setminus \pi \bZ$. 
\end{enumerate}
These three assertions prove the existence of a point in $(\cur \times \cur) \cap R_\theta(\cur \times \cur) \setminus \Delta_{\cur}$, which implies the existence of a $\theta$-rectangle on $\cur$.

By \cref{lem:muhom_fiber_seq}, we find that the change at $a \in \bR$ can be described by $\muhom(F_{\cur}, T_a R_\theta F_{\cur})|_{\{ \tau >0 \}}$, which is supported in (the conification of)
\[
    \DMS(F_{\cur}) \cap T_a\DMS(R_\theta F_{\cur}) 
    \subset \rho^{-1}((\cur \times \cur) \cap R_\theta(\cur \times \cur)).
\]
This proves the assertions (A) as well as (B) since 
\[
    \DMS(F_{\cur}) \cap T_a\DMS(R_\theta F_{\cur}) \cap \rho^{-1}(\Delta_{\cur}) 
    = \DMS(F_{\cur}) \cap T_a\DMS(F_{\cur}) \cap \rho^{-1}(\Delta_{\cur}) = \varnothing
\]
for $a \in \bR \setminus \pi \bZ$.
The most technical part is the proof of the assertion (C). 
For that purpose, we consider the value $a(\theta,\cur)$ informally defined as  
\[
    a(\theta,\cur)
    = 
    \{ a \in \bR_{\ge 0} \mid \text{$v$ can be lifted to $\Hom(F_{\cur},T_{-a}R_\theta F_{\cur})$} \}, 
\]
where $v \in H^1(\End(F_{\cur})) \simeq H^1(S^1)$ is the generator. 
Then $-a(\theta,\cur)$ is a critical value, and we will prove $a(\theta,\cur) \in (0,\pi)$ for any $\theta$ in \cref{lem:spectral_invariant}. 
Most of this section is devoted to the proof of this lemma, for which we will study $\muhom(F_{\cur}, R_\theta F_{\cur})$.

\begin{remark}\label{rem:suff_cond_muhom}
    By the arguments in this section, we will find the following.
    For a fixed $\theta \in (0,\pi)$, if there exists a critical value $a_0 \in \bR$ such that 
    \[
        \Gamma(\rho^{-1}(\Delta_{\cur});\muhom(F_{\cur}, T_{a_0} R_\theta F_{\cur})|_{\rho^{-1}(\Delta_{\cur})}) \simeq 0,
    \]
    then $C$ inscribes a $\theta$-rectangle.
    In particular, to prove the existence of a $\theta$-rectangle, it is enough to show the cohomology vanishing for $a_0=-a(\theta,\cur)$.
    The only reasonable case the authors know for ensuring the vanishing is the assumption for the conic microsupport in \cref{thm:main_sheaves}. 
\end{remark}

Let us start the proof of the theorem.
First note that by \cref{prop:Jordan_Cauchy}, which will be proved in \cref{section:curves}, $F_{\cur}$ is limit constructible.
We define
\[
    \cV_{C,\theta} \coloneqq \ell^! q_* \cHom^\star(F_{\cur}, R_\theta F_{\cur}) \in \cT(\pt),
\]
where $q \colon M^2 \times \bR_t/\pi\bZ \to \bR_t/\pi\bZ$ is the projection.
This object $\cV_{C,\theta}$ is also limit constructible.
We introduce the self-map on $\bC^2$ by
\[
    R^\phi_\theta \coloneqq (\phi \times \phi)^{-1} R_\theta (\phi \times \phi).
\]
Note that $R^\phi_\pi=R_\pi$.
We also write $R^\phi_\theta$ for the GKS kernel $\cK(\phi \times \phi)^{\ostar -1} R_\theta \cK(\phi \times \phi)$ by abuse of notation.
By \cref{lem:T-linear}, we have an isomorphism in $\cT(\pt)$:
\[
    \cV_{C,\theta} \simeq \ell^! q_* \cHom^\star(\Fstd,R^\phi_\theta \Fstd).
\]
The continuation morphism $\cV_{C,0}\to \cV_{C,\theta}\to \cV_{C,\pi}$ is compatible with the continuation morphism $\cV_{\cstd,0}\to \cV_{\cstd,\pi}$. 
Since we have a homotopy between $(R_\theta)_{\theta \in [0,\pi]}$ and $(R^\phi_\theta)_{\theta \in [0,\pi]}$ relative to the boundary, we find that the continuation morphisms $\id \to R_\pi$ and $\id \to R^\phi_\pi$ are the same via the identification $R_\pi \simeq R^\phi_\pi$. 
Indeed, we get the result when $\phi$ is smooth by the argument in \cite[Subsection~3.1]{Kuowrap}, and for a Hamiltonian homeomorphism $\phi$, we obtain the result by taking limits. 

For $a,a' \in \bR$ with $a \le a'$ and $\theta, \theta' \in \bR$ with $\theta \le \theta'$, we denote the continuation morphism by 
\[
    \tau_{a,a'}^{\theta,\theta'} \colon T_a R_{\theta}^\phi F_{\cstd} \to T_{a'} R_{\theta'}^\phi F_{\cstd}.
\]
Recall that we let $v \in H^1(S^1)$ be a generator. 

\begin{lemma}\label{lem:action_zero_smalltheta}
    For any $\theta \in (0,\pi)$, the right action of $v \in H^1(S^1) \simeq H^1(\End(\Fstd))$ on the cohomology of $\muhom(\Fstd,R^\phi_\theta \Fstd)$ that corresponds to the morphism $\tau_{0,0}^{0,\theta} \colon \Fstd \to R^\phi_\theta \Fstd$ is zero. 
\end{lemma}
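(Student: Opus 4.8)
The plan is to reduce the claim to the already-established computation for the standard circle, \cref{prop:standard_structure}, together with the structure of the microlocalization map \eqref{eq:microlocalization_morphism}. First I would observe that the statement is about the $\muhom$-section attached to the continuation morphism $\tau_{0,0}^{0,\theta}\colon\Fstd\to R^\phi_\theta\Fstd$. Since $\muhom(F,G)|_{\{\tau>0\}}$ is invariant under isomorphisms in $\Tam(T^*M^2)$ and functorial, and since there is a homotopy relative to the boundary between $(R_\theta)_\theta$ and $(R^\phi_\theta)_\theta$, the section in question corresponds under $\cK(\phi\times\phi)$ to the section of $\muhom(\Fstd,R_\theta\Fstd)$ attached to $\tau_{0,0}^{0,\theta}$ for the \emph{unperturbed} $R_\theta$; so it suffices to treat $\phi=\id$. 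Here one uses \cref{lem:T-linear}: the right $\End(\Fstd)$-module action of $v$ is transported equivariantly by the $\Tam$-linear equivalence $\cK(\phi\times\phi)\ostar(\mathchar`-)$.

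Next I would analyze the source of the relevant section. By the intersection computation $T_{-a}\tl R_\theta(\Lambda)\cap\Lambda$ displayed before \cref{prop:standard_structure}, for $(a,\theta)$ with $a=0$ and $\theta\in(0,\pi)$ (i.e.\ in the closure of $\triangledown'_0$ at $a=0$, adjacent to the vertex where $\vartriangle'_0$ and $\triangledown'_0$ meet), the clean intersection locus of $R_\theta(\Lambda)$ and $\Lambda$ is the diagonal copy $\{((\cos s;\sin s),(\cos s;\sin s),-2f_0(s))\}$, which is a circle, and $\muhom(\Fstd,R_\theta\Fstd)$ restricted to it is a shifted rank-one local system, so its global sections are $H^*(S^1)$. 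The point is to identify the element of $H^0$ of this $\muhom$ corresponding to $\tau^{0,\theta}_{0,0}$: by \cref{prop:standard_structure} we have $\Hom(\Fstd,R_\theta\Fstd)\simeq\End(\Fstd)\simeq H^*(S^1)$ in the chamber $\triangledown'_0$, and the continuation map $\tau^{0,\theta}_{0,0}$ factors the identity-to-$R_\theta$ map through the structural isomorphisms; it is the image of $1\in H^0(\End(\Fstd))$, i.e.\ the ``unit'' section. Under the microlocalization map this unit goes to the unit section $1\in H^0$ of $\muhom$ along the diagonal circle, which is precisely the pullback along the diagonal $S^1\hookrightarrow S^1\times S^1$ of $1\in H^0(\cstd\times\cstd)$.

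Finally I would compute the right $v$-action on this unit section. The right action of $v\in H^1(\End(\Fstd))$ on the $\muhom$-section is, via \eqref{eq:microlocalization_morphism}, multiplication in $H^*(\cstd\times\cstd)\simeq H^*(S^1\times S^1)$ by the image of $v$ under $\Hom(\Fstd,\Fstd)\to\Gamma(\{\tau>0\};\muhom(\Fstd,\Fstd))$, which is $v\otimes 1+1\otimes v$, followed by restriction to the diagonal circle. But restriction to the diagonal sends $v\otimes 1+1\otimes v$ to $v+v=2v=0$ in $H^*(S^1)$ since $\bfk=\bF_2$. Hence the action of $v$ on the unit $\muhom$-section along the diagonal vanishes, which is exactly the claim. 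The main obstacle I anticipate is bookkeeping the identification of $\tau^{0,\theta}_{0,0}$ with the unit section precisely, i.e.\ checking that the continuation morphism really corresponds to $1\in H^0$ under the chamber isomorphism of \cref{prop:standard_structure} rather than to some other class, and that the two right $\End(\Fstd)$-module structures (one on $\Hom$, one on $\muhom$) are intertwined by the microlocalization map; once that is pinned down, the final $\bF_2$-cancellation is immediate.
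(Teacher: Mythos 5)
Your closing computation (restricting $v\otimes 1+1\otimes v$ to the diagonal and using $2v=0$ over $\bF_2$) is the right mechanism for the \emph{standard circle}, but your opening reduction to $\phi=\id$ is a genuine gap that removes the entire content of the lemma. The homotopy relative to the boundary between $(R_\theta)_{\theta\in[0,\pi]}$ and $(R^\phi_\theta)_{\theta\in[0,\pi]}$ only identifies the continuation morphisms at the endpoints $\theta=0,\pi$ (where $R^\phi_\pi=R_\pi$); for an intermediate $\theta\in(0,\pi)$ the kernels are genuinely different, e.g.\ $\MS(R^\phi_\theta\Fstd)=(\phi\times\phi)^{-1}R_\theta(\cur\times\cur)$, which is not $R_\theta(\cstd\times\cstd)$ in general. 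What the $\Tam$-linearity of $\cK(\phi\times\phi)\ostar(\mathchar`-)$ (\cref{lem:T-linear}) buys you is an identification of the problem for the pair $(\Fstd,R^\phi_\theta\Fstd)$ with the problem for $(F_{\cur},R_\theta F_{\cur})$ --- it does \emph{not} transport it to the unperturbed pair $(\Fstd,R_\theta\Fstd)$. Consequently the support of $\muhom(\Fstd,R^\phi_\theta\Fstd)$ is governed by $\rho^{-1}R^\phi_\theta(\cstd\times\cstd)\cap\bR_{>0}\Lambda$, which for a general curve contains off-diagonal points (exactly the candidate inscribed $\theta$-rectangles), and $v\otimes1+1\otimes v$ has no reason to vanish on those; your diagonal-only argument simply does not apply there.

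The paper closes this gap as follows. It factors the canonical morphism $\muhom(\Fstd,\Fstd)\to\muhom(\Fstd,R^\phi_\theta\Fstd)$ through the limit over refinements $0=\theta_0<\dots<\theta_{n+1}=\theta$ of $\bigotimes_{i}\muhom(R^\phi_{\theta_i}\Fstd,R^\phi_{\theta_{i+1}}\Fstd)$, so that only the restriction of the image of $v$ to $\bigcap_{\theta'\in[0,\theta]}\rho^{-1}R^\phi_{\theta'}(\cstd\times\cstd)\cap\bR_{>0}\Lambda$ matters. An off-diagonal point of this smaller set forces $\phi(z),\phi(z')$ to be starting points of $\theta$-arcs of circles contained in $\cur$; hence, if $\cur$ is not a circle, one can choose an open $U\subset\cstd$ with $\phi(U)$ disjoint from the set $Z$ of such starting points, and the relevant support is then contained in $\Xi=\rho^{-1}\bigl(\Delta_{\cstd}\cup(\cstd\setminus U)\times(\cstd\setminus U)\bigr)\cap\bR_{>0}\Lambda$, where $v\otimes1+1\otimes v$ restricts to zero (on the diagonal by $2v=0$, off the diagonal because $\cstd\setminus U$ carries no $H^1$). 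The circle case, which is the only case your argument actually covers, is treated separately as the easy case. To repair your proposal you would need to add both the factorization through all intermediate continuations and the geometric $\theta$-arc argument producing $U$.
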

\begin{proof}
    Take $0= \theta_0 < \theta_1 < \theta_2 < \dots <\theta_n < \theta_{n+1}=\theta$.
    Then the canonical morphism $\muhom(\Fstd,\Fstd) \to \muhom(\Fstd,R^\phi_\theta \Fstd)$ factors as follows:
    \[
    \begin{tikzcd}[column sep = 1mm]
        \muhom(\Fstd,\Fstd) \ar[rr] \ar[rd] & & \muhom(\Fstd,R^\phi_\theta \Fstd) \\
        & \bigotimes_{i=0}^n \muhom(R^\phi_{\theta_i} \Fstd, R^\phi_{\theta_{i+1}} \Fstd). \ar[ru]
    \end{tikzcd}
    \]
    Recall that $\Lambda$ defined in \eqref{eq:Lambda} and consider its conification $\bR_{>0}\Lambda \subset T^*(M \times M \times \bR_t/\pi\bZ)$.
    Note that $\muhom(\Fstd,\Fstd) \simeq \bfk_{\bR_{>0}\Lambda}$.  
    The support of the sheaf $\bigotimes_{i=0}^n \muhom(R^\phi_{\theta_i} \Fstd, R^\phi_{\theta_{i+1}} \Fstd)$ is contained in 
    \[
        \bigcap_{i=0}^n \rho^{-1}R^\phi_{\theta_i}(\cstd \times \cstd) \cap \bR_{>0}\Lambda
    \]
    By taking refinements, we find that the canonical morphism factors through the limit as follows:
    \[
    \begin{tikzcd}[column sep = 1mm]
        \muhom(\Fstd,\Fstd) \ar[rr] \ar[rd] & & \muhom(\Fstd,R^\phi_\theta \Fstd) \\
        & \lim \bigotimes_{i=0}^n \muhom(R^\phi_{\theta_i} \Fstd, R^\phi_{\theta_{i+1}} \Fstd), \ar[ru]
    \end{tikzcd}
    \]
    where the limit in the second row is taken with respect to all the refinements.
    The support of $\lim\bigotimes_{i=0}^n \muhom(R^\phi_{\theta_i} \Fstd, R^\phi_{\theta_{i+1}} \Fstd)$ is contained in 
    \[
        \bigcap_{\theta' \in [0,\theta]} \rho^{-1} R^\phi_{\theta'}(\cstd \times \cstd) \cap \bR_{>0} \Lambda. 
    \]
    We say an arc in $\cur$ is a $\theta$-arc in $\cur$ if there exists $z_0 \in \bC$ and $r>0$ such that the arc coincides the arc with angle $\theta$ in the circle $\{ z \in \bC \mid |z-z_0|=r \}$. 
    If $(z,z')\in \bigcap_{\theta' \in [0,\theta]} R^\phi_{\theta'}(\cstd \times \cstd)\setminus \Delta_{\cstd}$, then there exist two $\theta$-arcs in $\cur$ (with counterclockwise directions) and $\phi(z), \phi(z')\in C$ are both starting points of these $\theta$-arcs.
    We set 
    \begin{equation}
        Z = \lc z \in C \relmid 
        \begin{aligned}
            \text{$z$ is a starting point of a (counterclockwise) $\theta$-arc in $\cur$}
        \end{aligned}
         \rc.
    \end{equation}
    
    We assume that $\cur$ is not a circle.
    In this case, we can take an open subset $U \subset \cstd$ such that $\phi(U)$ has no intersection with $Z$. 
    Then, we have
    \[
        \bigcap_{\theta' \in [0,\theta]} R^\phi_{\theta'}(\cstd \times \cstd)\cap ((U \times \cstd \cup \cstd \times U)\setminus \Delta_{\cstd})=\varnothing.
    \]
    Setting 
    \[
        \Xi \coloneqq \rho^{-1}(\cstd \times \cstd \setminus ((U \times \cstd \cup \cstd \times U)\setminus \Delta_{\cstd})) \cap \bR_{>0} \Lambda,
    \]
    we find that the right action of $v$ on $\Gamma(\Xi; \muhom(\Fstd, \Fstd))$ is zero since the morphism~\eqref{eq:microlocalization_morphism} maps $v$ to $v\otimes 1+1\otimes v$ and the restriction of $v\otimes 1+1\otimes v$ to $\Xi$ is zero.

    For the case that $\cur$ is a circle, this vanishing of $v\otimes 1+1\otimes v$ on the support is obvious from an explicit calculation of the support. 
    This completes the proof. 
\end{proof}

\begin{lemma}\label{lem:v_continuation}
    For any $\theta \in (0,\pi)$, the composite of the morphisms $\tau_{0,0}^{\theta,\pi} \colon R^\phi_\theta \Fstd \to R^\phi_\pi \Fstd \simeq R_\pi \Fstd$ and $R_\pi v \colon R_\pi \Fstd \to R_\pi \Fstd[1]$ is zero in $\Gamma(\{ \tau >0 \}; \muhom(R^\phi_\theta \Fstd, R_\pi \Fstd))[1]$.
    Moreover, the composite
    \begin{equation}\label{eq:muhom_vaction}
        \muhom(\Fstd,R^\phi_\theta \Fstd) \xrightarrow{\circ v} \muhom(\Fstd,R^\phi_\theta \Fstd)[1] \to \muhom(\Fstd, R_\pi \Fstd)[1]
    \end{equation}
    is the zero morphism.
\end{lemma}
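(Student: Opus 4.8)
The plan is to deduce both statements from \cref{lem:action_zero_smalltheta} together with the structural computation in \cref{prop:standard_structure} and the fact that the continuation morphisms for $(R_\theta)$ and $(R^\phi_\theta)$ agree after identifying $R_\pi \simeq R^\phi_\pi$. First I would observe that the two assertions are essentially equivalent: the second statement is the microlocalized (section-level) version of the first, obtained by composing with the map $\muhom(\Fstd,R^\phi_\theta\Fstd)\to\muhom(\Fstd,R^\phi_\pi\Fstd)=\muhom(\Fstd,R_\pi\Fstd)$. So it suffices to prove the first claim about $\tau^{\theta,\pi}_{0,0}$ followed by $R_\pi v$, and then observe the second follows by functoriality of $\muhom$ in the second variable and the factorization of the canonical morphism through $\muhom(R^\phi_\theta\Fstd,R_\pi\Fstd)$.

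For the first claim, I would start from \cref{lem:action_zero_smalltheta}, which says the section $s_\theta$ of $\muhom(\Fstd,R^\phi_\theta\Fstd)$ corresponding to $\tau^{0,\theta}_{0,0}$ satisfies $s_\theta\cdot v=0$. The morphism $\tau^{0,\pi}_{0,0}\colon\Fstd\to R_\pi\Fstd$ factors as $\tau^{\theta,\pi}_{0,0}\circ\tau^{0,\theta}_{0,0}$, so the section of $\muhom(\Fstd,R_\pi\Fstd)$ corresponding to $\tau^{0,\pi}_{0,0}$ is the image of $s_\theta$ under $\muhom(\Fstd,R^\phi_\theta\Fstd)\to\muhom(\Fstd,R_\pi\Fstd)$. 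Now I want to pass from ``$v$ acts as zero on $s_\theta$'' to ``$v$ acts as zero on the image of $s_\theta$ in $\muhom(\Fstd,R_\pi\Fstd)$'': this is automatic since the pushforward map along $\tau^{\theta,\pi}$ is a morphism of right $\End(\Fstd)$-modules. Hence $\tau^{0,\pi}_{0,0}\cdot v = 0$ in $\Gamma(\{\tau>0\};\muhom(\Fstd,R_\pi\Fstd))[1]$. To upgrade this to the statement about $R_\pi v\circ\tau^{\theta,\pi}_{0,0}$ rather than $v\circ\tau^{0,\pi}_{0,0}$, I use that right multiplication by $v$ on $\End(\Fstd)$ and left multiplication by $R_\pi v$ (the image of $v$ under $R_\pi$, acting on $R_\pi\Fstd$) induce the same map on $\muhom$ up to the graded-commutativity of $\muhom(\Fstd,\Fstd)\simeq H^*(\cstd\times\cstd)$, which is commutative; combined with \eqref{eq:microlocalization_morphism} sending $v\mapsto v\otimes 1+1\otimes v$, one checks $\tau^{0,\theta}_{0,0}$ already kills both $v\otimes 1$ and $1\otimes v$ on the relevant support, so postcomposing with $R_\pi v$ on $R_\pi\Fstd$ gives zero as well.

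The main obstacle I anticipate is bookkeeping the difference between the ``right action of $v$ via $\End(\Fstd)$'' and the ``action of $R_\pi v$ on the target'' carefully enough, i.e. making precise that these agree on sections of $\muhom(\Fstd,R_\pi\Fstd)$. This is where one must invoke that $\muhom$ makes $\Tam(T^*M^2)$ into a $\Sh(\{\tau>0\})$-enriched (indeed module) category, so that for a morphism $f\colon\Fstd\to R_\pi\Fstd$ one has $f\circ(R_\pi v) = (f\cdot v')$ where $v'\in\End(\Fstd)$ is the preimage of $R_\pi v$ under the equivalence $\End(\Fstd)\simeq\End(R_\pi\Fstd)$ — and since $R_\pi\simeq[1]$ is just a shift, $v'=v$. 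Once this identification is in place, everything reduces to \cref{lem:action_zero_smalltheta} as above. For the second, ``Moreover'' part, I would simply note the composite \eqref{eq:muhom_vaction} sends the canonical section (image of $\mathrm{id}_{\Fstd}$) to the section just shown to vanish, and the composite is $\Sh(\{\tau>0\})$-linear, hence determined by its value on the canonical section; since $\muhom(\Fstd,\Fstd)\simeq\bfk_{\bR_{>0}\Lambda}$ is generated over the structure sheaf by this section, the whole composite is zero.
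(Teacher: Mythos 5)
Your reduction does not prove the first assertion where it actually lives. The statement is that $R_\pi v \circ \tau_{0,0}^{\theta,\pi}$ vanishes as a section of $\Gamma(\{\tau>0\};\muhom(R^\phi_\theta \Fstd, R_\pi \Fstd))[1]$, but your argument (factor $\tau^{0,\pi}_{0,0}=\tau^{\theta,\pi}_{0,0}\circ\tau^{0,\theta}_{0,0}$ and push \cref{lem:action_zero_smalltheta} forward) only controls the image of that section under right-composition with the section corresponding to $\tau^{0,\theta}_{0,0}$, i.e.\ it shows $R_\pi v\circ\tau^{0,\pi}_{0,0}=0$ in $\Gamma(\{\tau>0\};\muhom(\Fstd, R_\pi\Fstd))[1]$. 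Vanishing after composing with another section is strictly weaker and does not imply the assertion. What is actually needed (and what the paper means by ``in a similar way to \cref{lem:action_zero_smalltheta}'') is to rerun that lemma's argument for the continuation over $[\theta,\pi]$: factor $\tau^{\theta,\pi}$ through refinements $\theta=\theta_0<\dots<\theta_{n+1}=\pi$, bound the support of the resulting section of $\muhom(R^\phi_\theta\Fstd,R_\pi\Fstd)$ by $\bigcap_{\theta'\in[\theta,\pi]}\rho^{-1}R^\phi_{\theta'}(\cstd\times\cstd)$ intersected with the relevant conic Lagrangian, and use the $\theta$-arc/open-set argument to see that $v\otimes 1+1\otimes v$ restricts to zero there. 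Your shortcut skips this geometric input entirely.

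The ``Moreover'' part is also not established by your argument. The source of \eqref{eq:muhom_vaction} is $\muhom(\Fstd,R^\phi_\theta\Fstd)$, not $\muhom(\Fstd,\Fstd)$, and it is \emph{not} generated over $\bfk_{\{\tau>0\}}$ by the image of $\id_{\Fstd}$: it has sections supported over the nontrivial intersection components of $\cstd\times\cstd$ with $R^\phi_\theta(\cstd\times\cstd)$ (precisely the sections $\alpha$ to which the lemma is later applied in \cref{lem:wtheta_independence}), and these are not multiples of the canonical section. If your ``determined by its value on the canonical section'' step were valid, the second assertion would follow from \cref{lem:action_zero_smalltheta} alone and the first assertion would be superfluous, which should be a warning sign. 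The correct deduction — and the reason the first assertion is stated in $\muhom(R^\phi_\theta\Fstd,R_\pi\Fstd)$ — is the paper's: using the commutation of the right $v$-action with continuation and the identification $R_\pi v=v$ (this part of your write-up is fine), the morphism \eqref{eq:muhom_vaction} equals left composition with the single global section $R_\pi v\circ\tau^{\theta,\pi}_{0,0}$ of $\muhom(R^\phi_\theta\Fstd,R_\pi\Fstd)[1]$, which vanishes by the first assertion; hence the whole morphism is zero on every section, not just the canonical one.
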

\begin{proof}
    The first assertion can be proved in a similar way in \cref{lem:action_zero_smalltheta}.
    Since the morphism~\eqref{eq:muhom_vaction} is equal to     
    \[
        \muhom(\Fstd, R^\phi_\theta \Fstd) \to \muhom(\Fstd, R_\pi \Fstd) \xrightarrow{v \circ} \muhom(\Fstd, R_\pi \Fstd)[1]
    \]
    with $R_\pi v = v$, the second assertion follows.
\end{proof}

Now we consider the following commutative diagram whose rows are (co)fiber sequences by \cref{lem:muhom_fiber_seq}:
\[
\begin{tikzcd}[column sep = 3mm]
    {\displaystyle \colim_{\varepsilon \to 0} \Hom(\Fstd, T_{-\varepsilon}\Fstd)} \ar[r] \ar[d] & \End(\Fstd) \ar[r] \ar[d] & \Gamma(\{\tau >0\};\muhom(\Fstd, \Fstd)) \ar[d] \\
    {\displaystyle \colim_{\varepsilon \to 0} \Hom(\Fstd, T_{-\varepsilon} R^\phi_\theta \Fstd)} \ar[r] & \Hom(\Fstd, R^\phi_\theta \Fstd) \ar[r] & \Gamma(\{\tau >0\};\muhom(\Fstd, R^\phi_\theta \Fstd)) \mathrlap{.}
\end{tikzcd}
\]
Then the image of $v$ in the right below is zero by \cref{lem:action_zero_smalltheta}.
We take an arbitrary element $w^\theta \in \colim_{\varepsilon \to 0} \Hom(\Fstd, T_{-\varepsilon} R^\phi_\theta \Fstd)[1]$ that is mapped to $\tau_{0,0}^{0,\theta} v \in \Hom(\Fstd, R^\phi_\theta \Fstd)[1]$. 
Note that the continuation morphism $\tau^{\theta,\pi}_{-\varepsilon,-\varepsilon}$ induces a morphism 
\[
    \colim \tau^{\theta,\pi}_{-\varepsilon,-\varepsilon} 
    \colon
    \colim_{\varepsilon \to 0} \Hom(F_{\cstd}, T_{-\varepsilon}R^\phi_\theta F_{\cstd})
    \to 
    \colim_{\varepsilon \to 0} \Hom(F_{\cstd}, T_{-\varepsilon}R_\pi F_{\cstd}).
\]

\begin{lemma}\label{lem:wtheta_independence}
    The element $\colim \tau_{-\varepsilon,-\varepsilon}^{\theta,\pi} w^\theta v \in \colim_{\varepsilon \to 0} \Hom(\Fstd, T_{-\varepsilon}R_{\pi} \Fstd)[2]$ is independent of the choices of $\theta\in (0,\pi)$ and $w^\theta$.
\end{lemma}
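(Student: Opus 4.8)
The plan is to dispose of the two indeterminacies one at a time: first fix $\theta\in(0,\pi)$ and show the class is independent of the lift $w^\theta$, then show that lifts chosen for different $\theta$ give the same element. The tool for both is the fiber sequence of the lemma preceding the Hamiltonian-action subsection,
\[
  \colim_{\varepsilon\to 0}\Hom(\Fstd, T_{-\varepsilon}G)\xrightarrow{\ \iota\ }\Hom(\Fstd,G)\xrightarrow{\ p\ }\Gamma(\{\tau>0\};\muhom(\Fstd,G)),
\]
valid whenever the relevant reduced microsupports are compact, with connecting map $\partial\colon \Gamma(\{\tau>0\};\muhom(\Fstd,G))\to\colim_{\varepsilon\to 0}\Hom(\Fstd, T_{-\varepsilon}G)[1]$. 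I will use that this sequence is functorial in $G$ and is a sequence of right $\End(\Fstd)$-modules; consequently $\partial$ is $\End(\Fstd)$-linear and commutes with the maps induced by the continuation morphism $R^\phi_\theta\Fstd\to R^\phi_\pi\Fstd\simeq R_\pi\Fstd$. Here the right action of $v$ is precomposition with $v\colon\Fstd\to\Fstd[1]$, so that $w^\theta v\in\colim_{\varepsilon\to 0}\Hom(\Fstd,T_{-\varepsilon}R^\phi_\theta\Fstd)[2]$.

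\emph{Independence of $w^\theta$ for fixed $\theta$.} If $w^\theta$ and $w_1^\theta$ both map to $\tau_{0,0}^{0,\theta}v$ under $\iota$, then by exactness $w^\theta-w_1^\theta=\partial\delta$ for some $\delta\in\Gamma(\{\tau>0\};\muhom(\Fstd,R^\phi_\theta\Fstd))$. By $\End(\Fstd)$-linearity of $\partial$ we get $(w^\theta-w_1^\theta)v=\partial(\delta v)$, and by naturality of the fiber sequence, $\colim \tau_{-\varepsilon,-\varepsilon}^{\theta,\pi}\bigl((w^\theta-w_1^\theta)v\bigr)$ equals the image of $\delta v$ under the composite
\[
  \Gamma(\{\tau>0\};\muhom(\Fstd,R^\phi_\theta\Fstd))[1]\to\Gamma(\{\tau>0\};\muhom(\Fstd,R_\pi\Fstd))[1]\xrightarrow{\ \partial'\ }\colim_{\varepsilon\to 0}\Hom(\Fstd,T_{-\varepsilon}R_\pi\Fstd)[2].
\]
But the second assertion of \cref{lem:v_continuation} says precisely that $\muhom(\Fstd,R^\phi_\theta\Fstd)\xrightarrow{\circ v}\muhom(\Fstd,R^\phi_\theta\Fstd)[1]\to\muhom(\Fstd,R_\pi\Fstd)[1]$ is the zero morphism, so $\delta v$ already maps to $0$ in $\Gamma(\{\tau>0\};\muhom(\Fstd,R_\pi\Fstd))[1]$. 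Hence $\colim \tau_{-\varepsilon,-\varepsilon}^{\theta,\pi}(w^\theta v)=\colim \tau_{-\varepsilon,-\varepsilon}^{\theta,\pi}(w_1^\theta v)$.

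\emph{Independence of $\theta$.} Since $(0,\pi)$ is an interval it suffices to compare $\theta\le\theta'$. By functoriality of the continuation morphisms, $\tau_{0,0}^{\theta,\theta'}\circ\tau_{0,0}^{0,\theta}=\tau_{0,0}^{0,\theta'}$; together with naturality of $\iota$ in the target, this shows that $\colim \tau_{-\varepsilon,-\varepsilon}^{\theta,\theta'}w^\theta$ maps to $\tau_{0,0}^{0,\theta'}v$ under $\iota$, i.e.\ it is an admissible choice of $w^{\theta'}$. Right multiplication by $v$ (precomposition) commutes with $\colim \tau_{-\varepsilon,-\varepsilon}^{\theta,\theta'}$ (postcomposition), and $\colim \tau_{-\varepsilon,-\varepsilon}^{\theta',\pi}\circ\colim \tau_{-\varepsilon,-\varepsilon}^{\theta,\theta'}=\colim \tau_{-\varepsilon,-\varepsilon}^{\theta,\pi}$, so
\[
  \colim \tau_{-\varepsilon,-\varepsilon}^{\theta,\pi}(w^\theta v)=\colim \tau_{-\varepsilon,-\varepsilon}^{\theta',\pi}\bigl((\colim \tau_{-\varepsilon,-\varepsilon}^{\theta,\theta'}w^\theta)\,v\bigr)=\colim \tau_{-\varepsilon,-\varepsilon}^{\theta',\pi}(w^{\theta'}v)
\]
for this particular $w^{\theta'}$; by the first part the right-hand side does not change if $w^{\theta'}$ is replaced by any other admissible lift. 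Combining the two parts gives the full independence.

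The step I expect to be the main obstacle is not any single computation but pinning down these compatibilities: that the fiber sequence above is genuinely natural in $G$ and is a sequence of right $\End(\Fstd)$-modules (so that $\partial$ is module-linear and commutes with the continuation maps and with $\circ v$), and that $\tau_{0,0}^{0,\theta}v$ is literally the image of $v$ under postcomposition with the continuation morphism, so that the cocycle identity $\tau_{0,0}^{\theta,\theta'}\circ\tau_{0,0}^{0,\theta}=\tau_{0,0}^{0,\theta'}$ may be applied. Granting these, the only input with genuine geometric content is \cref{lem:v_continuation}, and everything else is a diagram chase in the associated long exact sequences.
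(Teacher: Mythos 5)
Your proof is correct and follows essentially the same route as the paper's: independence of the lift via the (co)fiber sequence and the vanishing in \cref{lem:v_continuation}, then independence of $\theta$ by observing that $\colim \tau_{-\varepsilon,-\varepsilon}^{\theta,\theta'}w^\theta$ is an admissible lift for $\theta'$ and reducing to the first step. The compatibilities you flag (naturality of the fiber sequence in the target and its right $\End(\Fstd)$-module structure) are exactly what the paper also uses, implicitly.
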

\begin{proof}
    First, fix $\theta$ and consider two elements $w^\theta_0$ and $w^\theta_1$ that are mapped to $\tau^{0,\theta}_{0,0}v$.
    By the (co)fiber sequence above, the difference $w^\theta_0 - w^\theta_1$ is written as the image of an element $\alpha \in \Gamma(\{\tau >0\};\muhom(\Fstd, R^\phi_\theta \Fstd))$.
    The morphism that sends $\alpha$ to $\colim \tau_{-\varepsilon,-\varepsilon}^{\theta,\pi} (w^\theta_0-w^\theta_1) v$ factors the morphism 
    \[
    \Gamma(\{\tau >0\};\muhom(\Fstd, R^\phi_\theta \Fstd))\to \Gamma(\{\tau >0\};\muhom(\Fstd, R_\pi \Fstd))[1],
    \]
    which is zero by \cref{lem:v_continuation}. 
    This proves $\colim \tau_{-\varepsilon,-\varepsilon}^{\theta,\pi} (w^\theta_0-w^\theta_1) v=0$. 

    Next, we will prove the independence on $\theta$.
    Let $\theta \le \theta'$ and take $w^{\theta}$ and $w^{\theta'}$ that are mapped to $v$. 
    Then we can apply the above argument to the two element $\colim \tau_{-\varepsilon,-\varepsilon}^{\theta,\theta'}w^\theta$ and $w^{\theta'}$ in $\colim_{\varepsilon \to 0} \Hom(\Fstd, T_{-\varepsilon} R^\phi_\theta \Fstd)[1]$, which prove the lemma. 
\end{proof}

\begin{lemma}\label{lem:colim_nonzero}
    The $\colim \tau_{-\varepsilon,-\varepsilon}^{\theta,\pi} w^\theta v \in \colim_{\varepsilon \to 0} \Hom(\Fstd, T_{-\varepsilon}R_{\pi} \Fstd)[2]$ is non-zero. 
\end{lemma}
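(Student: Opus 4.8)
The plan is to reduce to the standard circle $\cstd$ and then compute by hand, after pinning down the one-dimensional group in which $e\coloneqq\colim\tau^{\theta,\pi}_{-\varepsilon,-\varepsilon}w^\theta v$ lives. Using $R_\pi\Fstd\simeq\Fstd[1]$, the $\muhom$-(co)fiber sequence for the pair $(\Fstd,R_\pi\Fstd)$ has middle term $\Hom(\Fstd,R_\pi\Fstd)\simeq\End(\Fstd)[1]\simeq H^*(S^1)[1]$, whose degree-$1$ and degree-$2$ parts vanish, and right term $\Gamma(\{\tau>0\};\muhom(\Fstd,R_\pi\Fstd))\simeq H^*(\cstd\times\cstd)[1]$ by \eqref{eq:microlocalization_morphism}. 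The long exact sequence therefore shows that the connecting morphism $\partial^\pi\colon H^1(\Gamma(\{\tau>0\};\muhom(\Fstd,R_\pi\Fstd)))\to H^2(\colim_{\varepsilon\to0}\Hom(\Fstd,T_{-\varepsilon}R_\pi\Fstd))$ is an isomorphism of one-dimensional spaces, the source being spanned by the shift of $v\otimes v\in H^2(\cstd\times\cstd)$; so the claim is the nonvanishing of a single cohomology class, and will come down to a rank count.

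Next I would reduce to $\phi=\id$. The class $e$ is manufactured only from $\Fstd$, the generator $v$, and the GKS continuation kernels along the Hamiltonian path $(R^\phi_\theta)_{\theta\in[0,\pi]}$, the choice of $w^\theta$ being irrelevant by \cref{lem:wtheta_independence} and its existence guaranteed by \cref{lem:action_zero_smalltheta}. Pick a path $(\phi_s)_{s\in[0,1]}$ of compactly supported Hamiltonian homeomorphisms from $\id$ to $\phi$ (so each $\phi_s(\cstd)$ is again a Jordan curve); the conjugated paths $(R^{\phi_s}_\theta)_\theta$ interpolate between $(R_\theta)_\theta$ and $(R^\phi_\theta)_\theta$ with endpoints fixed, $R^{\phi_s}_0=\id$ and $R^{\phi_s}_\pi=R_\pi$ for all $s$. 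Since \cref{lem:action_zero_smalltheta} applies to every $\phi_s$, the corresponding class $e_{\phi_s}$ is defined for all $s$, lies in the fixed discrete group $\bfk$, and varies continuously in $s$ with respect to the interleaving distance, hence is constant. Thus $e_\phi=e_{\id}$ and we may assume $\cur=\cstd$ and $R^\phi_\theta=R_\theta$.

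For $\cstd$, fix $\theta\in(0,\pi)$ and a small $\varepsilon>0$. By \cref{prop:standard_structure}, $(\varepsilon,\theta)\in\vartriangle'_0$ gives $\colim_{\varepsilon\to0}\Hom(\Fstd,T_{-\varepsilon}R_\theta\Fstd)\simeq\End(\Fstd)[-1]$ as an $\End(\Fstd)$-module, and $(0,\theta)\in\triangledown'_0$ gives $\Hom(\Fstd,R_\theta\Fstd)\simeq\End(\Fstd)$. The cofiber of the continuation between these is $\Gamma(\{\tau>0\};\muhom(\Fstd,R_\theta\Fstd))$, which by the intersection computation preceding \cref{prop:standard_structure} (the $a=0$ component $\tl{R}_\theta(\Lambda)\cap\Lambda=\{((\cos s;\sin s),(\cos s;\sin s),-2f_0(s))\mid s\in\bR/2\pi\bZ\}$) is supported on a single circle at $\{\tau>0\}$, hence is isomorphic to $H^*(S^1)$ up to shift. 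A rank count in degree $1$ then forces the continuation $\colim_{\varepsilon\to0}\Hom(\Fstd,T_{-\varepsilon}R_\theta\Fstd)\to\Hom(\Fstd,R_\theta\Fstd)$ to be an isomorphism on $H^1$ — otherwise the cofiber would have two-dimensional $H^0$, i.e.\ two circles of support. Consequently $w^\theta$ is the generator of $H^1(\colim_{\varepsilon\to0}\Hom(\Fstd,T_{-\varepsilon}R_\theta\Fstd))\cong H^0(\End(\Fstd))$, namely the shifted unit; then $w^\theta v$ is the shifted generator $v\in H^1(\End(\Fstd))$ and is nonzero in $H^2$. Finally, since $(\varepsilon,\theta)$ and $(\varepsilon,\pi)$ lie in the same component $\vartriangle'_0$ and no intersection $T_{-\varepsilon}\tl{R}_{\theta'}(\Lambda)\cap\Lambda$ occurs for $\theta'\in[\theta,\pi]$, the microlocal Morse lemma makes $\colim\tau^{\theta,\pi}_{-\varepsilon,-\varepsilon}$ an isomorphism, so $e=\colim\tau^{\theta,\pi}_{-\varepsilon,-\varepsilon}w^\theta v\neq0$.

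The hard part is the reduction of the previous paragraph: making rigorous that the homotopy of Hamiltonian paths relative to the boundary yields the honest equality $e_\phi=e_{\id}$, since the intermediate sheaves $\muhom(\Fstd,R^{\phi_s}_\theta\Fstd)$ genuinely vary with $s$ — their supports at $\{\tau>0\}$ are the loci of degenerate and proper inscribed $\theta$-rectangles, which change with the curve. This should be organized as a family over $[0,1]$ together with the rigidity of the whole construction. An alternative that avoids the reduction is to chase the diagram directly over the given curve: with $w^\theta v=\partial^\theta\beta$, naturality of the connecting morphism gives $e=\partial^\pi(\tau^{\theta,\pi}\beta)$, and $\partial^\pi$ is an isomorphism, so one is left to prove that $\tau^{\theta,\pi}\beta\in H^1(\Gamma(\{\tau>0\};\muhom(\Fstd,R_\pi\Fstd)))\cong H^2(\cstd\times\cstd)$ is the generator $v\otimes v$; this amounts to following the null-homotopy of the $v$-action on the section attached to $\tau^{0,\theta}_{0,0}$ as $\theta\to\pi$, where that section becomes the one attached to the transformation $\id\Rightarrow R_\pi$.
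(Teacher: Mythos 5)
Your computation for the standard circle is sound (and the observation that the connecting map $H^1(\Gamma(\{\tau>0\};\muhom(\Fstd,R_\pi\Fstd)))\to H^2(\colim_{\varepsilon\to 0}\Hom(\Fstd,T_{-\varepsilon}R_\pi\Fstd))$ is an isomorphism is a nice way to pin down where the class lives), but the step on which everything hinges --- passing from $\cstd$ to a general curve $\cur=\phi(\cstd)$ --- is a genuine gap, and you say so yourself. The assertion that $e_{\phi_s}$ ``lies in the fixed discrete group $\bfk$, varies continuously in $s$ with respect to the interleaving distance, hence is constant'' is not an argument: a $\bZ/2$-valued assignment has no a priori continuity, and local constancy is exactly what must be proved. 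Smallness of the interleaving distance between $R^{\phi_s}_\theta\Fstd$ and $R^{\phi_{s'}}_\theta\Fstd$ does not by itself transport the distinguished lift $w^\theta$: a representative of $w^\theta$ lives at some filtration level $\varepsilon_0>0$ that may be smaller than the interleaving parameter $\delta$, and even when it survives, one must still identify its image with $\tau^{0,\theta}_{0,0}v$ for the deformed curve. That identification is precisely the ``claim'' the paper has to establish, and it is where all the work is: the paper first reduces to small $\theta$ via \cref{lem:wtheta_independence}, then (i) for a Hamiltonian \emph{diffeomorphism} $\phi$ uses a bi-Lipschitz constant $B$ to sandwich $\frac1{B^2}H\le H^\phi\le B^2H$, hence $\id\le R_{\theta/B^2}\le R^\phi_\theta\le R_{B^2\theta}$ (with $B^2\theta<\pi$), and transports the nonvanishing element through these genuine continuation morphisms using \cref{prop:standard_structure}; and (ii) for a Hamiltonian \emph{homeomorphism} approximates by real-analytic curves $C_n$, uses a $\delta$-interleaving, and proves the transported morphism equals $\tau^{0,\theta_0}_{0,0}v$ by a barcode-rigidity argument (the quantities $a_\theta,b_\theta$, analyticity of $C_n$, and the fact that for small $\theta_0$ the relevant bar has length close to $\pi$ and is the unique such bar). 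Your deformation scheme, to be made rigorous, would need essentially this same rigidity input, and note that you never reduce to small $\theta$, whereas the smallness of $\theta$ is what makes both the bi-Lipschitz sandwich and the bar-uniqueness argument work.

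The alternative you sketch in the last paragraph (compute $\tau^{\theta,\pi}_*\beta\in H^1(\Gamma(\{\tau>0\};\muhom(\Fstd,R_\pi\Fstd)))\cong H^2(\cstd\times\cstd)$ and show it is $v\otimes v$) is set up correctly via naturality of the (co)fiber sequence, but it is not carried out, and it is not clearly easier: the continuation of $\muhom$-sections from $\theta$ to $\pi$ passes through supports governed by the intersection loci $R^\phi_{\theta'}(\cstd\times\cstd)\cap(\cstd\times\cstd)$, which depend on the curve in an essential way, so ``following the null-homotopy'' again requires a quantitative control analogous to the paper's interleaving and persistence arguments. As it stands, the proposal proves the lemma only for $\cur=\cstd$ (where it agrees with the paper's use of \cref{prop:standard_structure}); the case of a general curve, which is the actual content of the lemma, is not established.
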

\begin{proof}
    By \cref{lem:wtheta_independence}, it is enough to show the claim for a sufficiently small $\theta>0$.

    Let us first consider the case $\phi$ is a Hamiltonian diffeomorphism with compact support. 
    In this case, we will reduce the problem to the case of the standard circle $\cstd$. 
    There exists a bi-Lipschitz constant $B$ such that 
    \[
        \frac{1}{B} d_E(z,z') \le d_E(\phi(z),\phi(z')) \le B d_E(z,z')
    \]
    for any $z,z' \in \bC$, where $d_E$ stands for the Euclidean metric.
    Note that $R_\theta$ is generated by $H(z_1,z_2)=|z_1-z_2|^2/4$ and $R^\phi_\theta$ is generated by $H^\phi=H \circ (\phi \times \phi)$, which implies 
    \[
        \frac{1}{B^2} H \le H^\phi \le B^2 H.
    \]
    Hence, as positive Hamiltonian isotopies, we have
    \[
        \id \le R_{\theta/B^2} \le R^\phi_\theta \le R_{B^2\theta} \quad \text{for $\theta \ge 0$},
    \]
    which gives continuation morphisms.
    
    We take $\theta>0$ satisfying $B^2\theta<\pi$.
    Then, for $0<\varepsilon<\theta/B^2$, we have the following interleaving
    \begin{equation}\label{eq:interleaving}
        \Hom(\Fstd, T_{-\varepsilon}R_{\theta/B^2} \Fstd) \to \Hom(\Fstd, T_{-\varepsilon}R^\phi_\theta \Fstd) \to \Hom(\Fstd, T_{-\varepsilon} R_\pi \Fstd).
    \end{equation}
    We take an element $w^\theta_\varepsilon \in \Hom(\Fstd, T_{-\varepsilon} R_{\theta/B^2} \Fstd)[1]$ that is mapped to the image of $v$ in $\Hom(\Fstd, R_{\theta/B^2} \Fstd)[1]$ via the continuation morphism. 
    Its image under the first interleaving morphism in \eqref{eq:interleaving} defines an element $w^\theta \in \colim_{\varepsilon \to 0} \Hom(F_{\cstd}, T_{-\varepsilon}R^\phi_\theta F_{\cstd})[1]$, which is mapped to $\tau_{0,0}^{0,\theta} v \in \Hom(\Fstd, R^\phi_\theta \Fstd)[1]$.
    By the arguments in \cref{subsection:standard_circle}, we find that $w^\theta_\varepsilon v \in \Hom(\Fstd, T_{-\varepsilon} R_{\theta/B^2} \Fstd)[2]$ is non-zero and 
    both of the morphisms 
    \begin{align*}
        \Hom(\Fstd, T_{-\varepsilon}R_{\theta/B^2} \Fstd) & \to \Hom(\Fstd, T_{-\varepsilon} R_\pi \Fstd) \quad  \text{and} \\
        \Hom(\Fstd, T_{-\varepsilon} R_\pi \Fstd) & \to \colim_{\varepsilon \to 0}  \Hom(\Fstd, T_{-\varepsilon} R_\pi \Fstd)
    \end{align*}
    are isomorphisms by \cref{prop:standard_structure}.
    Hence $\colim \tau_{-\varepsilon,-\varepsilon}^{\theta,\pi} w^\theta v$ is non-zero.
    
    Now we consider the continuous case and take a sequence of Hamiltonian diffeomorphisms with compact support $(\phi_n)_n$ that converges to a Hamiltonian homeomorphism $\phi$ in the $C^0$-sense. We take $(\phi_n)_n$ so that each $C_n \coloneqq \phi_n (\cstd)$ is real analytic. 
    Since the Hamiltonian function $H$ is bounded on $C\times C$, we can choose sufficiently small $\theta_0>0$ so that $\sup_{\theta\in [0,\theta_0]} d(\Fstd, R_\theta^\phi\Fstd)$ is sufficiently small. 
    Take $\varepsilon>0$ and a representative $w^\theta_\varepsilon \in \Hom(\Fstd, T_{-\varepsilon} R^\phi_{\theta_0} \Fstd)[1]$ of $w^\theta$.
    For a sufficiently large $n$, there is a $(\delta,\delta)$-interleaving for the pair $(R^\phi_{\theta_0} \Fstd, R^{\phi_n}_{\theta_0} \Fstd)$, where $\delta < \varepsilon/100$. 
    
    Let us consider the following commutative diagram:
    \[
    \begin{tikzcd}
        \Fstd \ar[r] & T_{-\varepsilon} R^\phi_{\theta_0} \Fstd[1] \ar[r] \ar[d] & T_{-\varepsilon+\delta} R^{\phi_n}_{\theta_0} \Fstd[1] \ar[r]\ar[d] &R^{\phi_n}_{\theta_0} \Fstd[1]\\
        & T_{-\varepsilon} R^\phi_\pi \Fstd[1] \ar[r] & T_{-\varepsilon+\delta} R^{\phi_n}_\pi \Fstd[1]. &
    \end{tikzcd}    
    \]
    We claim that the upper morphism $\Fstd \to R^{\phi_n}_{\theta_0} \Fstd[1]$ is $\tau_{0,0}^{0,{\theta_0}}v$. 
    We postpone the proof of this claim and first prove the assertion of the lemma. 
    By the smooth case proved above, the composite of $v$ and the morphism $\Fstd \to T_{-\varepsilon+\delta} R^{\phi_n}_\pi \Fstd[1]$ is non-zero.
    Hence, the composite of $v$ and the morphism $\Fstd \to T_{-\varepsilon} R^\phi_\pi \Fstd[1]$ is also non-zero.
    Then the result follows from the fact that 
    \[
        \Hom(\Fstd, T_{-\varepsilon} R^\phi_\pi \Fstd) \to \colim_{\varepsilon \to 0} \Hom(\Fstd, T_{-\varepsilon} R^\phi_\pi \Fstd)
    \]
    is an isomorphism.
    
    Let us prove the remaining claim by investigating the following two quantities $a(\theta,C_n)$ and $b(\theta,C_n)$ defined for $\theta \in [0,\pi)$ and $C_n$ with the property $\tau_{0,0}^{0,\theta} v\neq 0\in \Gamma_{[0,\infty)}(\bR;\cV_{C_n,\theta})[1]$:
    \begin{align*}
        a(\theta,C_n) & \coloneqq \sup \{ a \in \bR_{\ge 0} \mid \text{$\tau_{0,0}^{0,\theta}v$ is in the image of $\Gamma_{[a,\infty)}(\bR;\cV_{C_n,\theta})[1]$} \}, \\
        b(\theta,C_n) & \coloneqq \sup \lc b \in \bR_{\ge 0} \relmid 
        \begin{aligned}
            & \text{there exist $w\in \Gamma_{[b,\infty)}(\bR;\cV_{C_n,\theta})[1]$ and $t \in \bR_{\ge 0}$} \\
            & \text{such that $w$ and $\tau_{0,0}^{0,\theta}v$ coincide in $\Gamma_{[-t,\infty)}(\bR;\cV_{C_n,\theta})[1]$} \\
            & \text{as non-zero elements}
        \end{aligned}
         \rc. 
    \end{align*}
    By definition $a(\theta,C_n) \le b(\theta,C_n)$, and we already know $b(\theta_0,C_n) \ge \varepsilon-\delta$. 
    We will show $a(\theta_0,C_n)=b(\theta_0,C_n)$ and then obtain the claim with the interleaving for $(R^\phi_{\theta_0} \Fstd, R^{\phi_n}_{\theta_0} \Fstd)$. 
    We will prove it by contradiction and suppose that $a(\theta_0,C_n) \neq b(\theta_0,C_n)$.
    Consider the real number
    \[
        \theta_1\coloneqq \inf\{\theta\in [0,\theta_0]\mid a(\theta,C_n)
         \neq b(\theta,C_n)\}. 
    \]
    By the analyticity of $C_n$, the family $(\Hom (\Fstd, R_{\theta}^{\phi_n}\Fstd))_{\theta}$ is constant for sufficiently small $\theta>0$. 
    By the interleaving with $\cstd$ as above, $H^1(\Hom (\Fstd, R_{\theta}^{\phi_n}\Fstd))$ is $1$-dimensional, and hence it contains a unique non-zero element. 
    This proves $a(\theta,C_n)=b(\theta,C_n)$ for a sufficiently small~$\theta$, which implies $\theta_1>0$. 
    Consider the continuous family $(\cV_{C_n,\theta})_{\theta}$ of constructible sheaves on $\bR$, which can be regarded as a family of persistence modules (in the derived sense). 
    For $0\le \theta<\theta_1$, the element $\tau_{0,0}^{0,\theta} v$ corresponds to an interval module that is a summand of $\cV_{C_n,\theta}$ and has a length close to $\pi$. 
    When $\theta$ exceeds $\theta_1$, a change of basis occurs and the element no longer corresponds to a single interval module. 
    For such a change of basis, there needs to be another interval module of the same length. 
    However, since $\theta_0$ is sufficiently small, such an interval module cannot exist, which makes a contradiction. 
\end{proof}

\begin{lemma}\label{lem:non_zero_alltheta}
    For any $\theta \in (0,\pi)$, the element $\tau_{0,0}^{0,\theta} v$ is non-zero in $\Hom(\Fstd, R^\phi_\theta \Fstd)[1] \simeq \Hom(F_{\cur}, R_\theta F_{\cur})[1]$.
\end{lemma}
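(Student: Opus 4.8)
The strategy is to show that $\tau_{0,0}^{0,\theta} v \neq 0$ by tracking it forward to $\theta = \pi$ and using the non-vanishing established in \cref{lem:colim_nonzero}. First I would recall the commutative square relating $\Hom(\Fstd, T_{-\varepsilon}R^\phi_\theta \Fstd)$ (and its colimit over $\varepsilon \to 0$) with $\Hom(\Fstd, R^\phi_\theta \Fstd)$ and the $\muhom$-term, exactly as in the diagram preceding \cref{lem:colim_nonzero}. By construction, the element $w^\theta \in \colim_{\varepsilon\to 0}\Hom(\Fstd, T_{-\varepsilon}R^\phi_\theta\Fstd)[1]$ is a lift of $\tau_{0,0}^{0,\theta}v$ along the map $\colim_{\varepsilon\to 0}\Hom(\Fstd, T_{-\varepsilon}R^\phi_\theta\Fstd)\to \Hom(\Fstd, R^\phi_\theta\Fstd)$; such a lift exists precisely because the image of $v$ in $\Gamma(\{\tau>0\};\muhom(\Fstd,R^\phi_\theta\Fstd))$ vanishes by \cref{lem:action_zero_smalltheta}.

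Next I would argue by contrapositive: suppose $\tau_{0,0}^{0,\theta}v = 0$ in $\Hom(\Fstd, R^\phi_\theta\Fstd)[1]$ for some $\theta \in (0,\pi)$. Then any lift $w^\theta$ of it (which we may take to be $0$, or more precisely lies in the image of the boundary map from the $\muhom$-term) has the property that $\colim\tau^{\theta,\pi}_{-\varepsilon,-\varepsilon}\, w^\theta v$ equals the image under the continuation of an element coming from $\Gamma(\{\tau>0\};\muhom(\Fstd, R^\phi_\theta\Fstd))[1]$ multiplied by $v$; but by \cref{lem:v_continuation}, the composite $\muhom(\Fstd, R^\phi_\theta\Fstd)\xrightarrow{\circ v}\muhom(\Fstd,R^\phi_\theta\Fstd)[1]\to \muhom(\Fstd, R_\pi\Fstd)[1]$ is zero, and after passing to the colimit over $\varepsilon$ and applying the continuation to $R_\pi$, this forces $\colim\tau^{\theta,\pi}_{-\varepsilon,-\varepsilon}\, w^\theta v = 0$. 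This directly contradicts \cref{lem:colim_nonzero}, which asserts exactly that this element is non-zero. Hence $\tau_{0,0}^{0,\theta}v \neq 0$ for every $\theta\in(0,\pi)$.

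Finally I would note that the identification $\Hom(\Fstd, R^\phi_\theta \Fstd)[1]\simeq \Hom(F_{\cur}, R_\theta F_{\cur})[1]$ is the one already recorded, coming from \cref{lem:T-linear} applied to $\varphi = \phi\times\phi$ together with the relation $R^\phi_\theta = \cK(\phi\times\phi)^{\ostar -1} R_\theta \cK(\phi\times\phi)$, so the non-vanishing transfers to the $F_{\cur}$-side verbatim. The main obstacle here is purely bookkeeping: ensuring that the lift $w^\theta$ in the degenerate case is genuinely controlled by the $\muhom$-term (so that \cref{lem:v_continuation} applies), and that the colimit and continuation operations commute with multiplication by $v$ as needed — but both points follow from the naturality of the fiber sequence in the diagram before \cref{lem:colim_nonzero} and the $\End(\Fstd)$-linearity of all morphisms involved, so no new input is required beyond the preceding lemmas.
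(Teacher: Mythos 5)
Your proposal is correct and follows essentially the same route as the paper: assume $\tau_{0,0}^{0,\theta}v=0$, note that $w^\theta=0$ is then a valid lift, and derive a contradiction with \cref{lem:colim_nonzero} via the independence of the choice of lift (which you re-derive inline from \cref{lem:v_continuation}, whereas the paper simply cites \cref{lem:wtheta_independence}). The bookkeeping points you flag are exactly what those two preceding lemmas already settle, so no new input is needed.
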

\begin{proof}
    If $\tau_{0,0}^{0,\theta} v=0$, we can take $w^\theta$ as the zero element. 
    This contradicts to \cref{lem:wtheta_independence,lem:colim_nonzero}.
\end{proof}

By \cref{lem:non_zero_alltheta,cor:pi_torsion}, we can define $a(\theta,C)$ by
\[
    a(\theta,C) \coloneqq \sup \{ a \in \bR_{\ge 0} \mid \text{$\tau_{0,0}^{0,\theta} v$ is in the image of $\Gamma_{[a,\infty)}(\bR;\cV_{C,\theta})[1]$} \} \in \bR_{\ge 0}, 
\]
which already appeared in the proof of \cref{lem:colim_nonzero}.

\begin{lemma}\label{lem:spectral_invariant}
    For any $\theta\in (0,\pi)$, one has $a(\theta,C) \in (0, \pi)$.
\end{lemma}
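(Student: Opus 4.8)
The plan is to show the two inequalities $a_\theta > 0$ and $a_\theta < \pi$ separately, using the results already established for the standard circle together with the key non-vanishing statements of \cref{lem:non_zero_alltheta,lem:colim_nonzero}. First I would prove $a_\theta > 0$. By \cref{lem:non_zero_alltheta}, $\tau_{0,0}^{0,\theta} v$ is a non-zero element of $\Hom(\Fstd, R^\phi_\theta \Fstd)[1]$. On the other hand, \cref{lem:action_zero_smalltheta} says that the image of $v$ under the right action on the relevant $\muhom$ section vanishing; interpreting this via the fiber sequence
\[
    \colim_{\varepsilon \to 0} \Hom(\Fstd, T_{-\varepsilon} R^\phi_\theta \Fstd) \to \Hom(\Fstd, R^\phi_\theta \Fstd) \to \Gamma(\{\tau>0\};\muhom(\Fstd, R^\phi_\theta \Fstd)),
\]
the element $\tau_{0,0}^{0,\theta} v$ lies in the image of $\colim_{\varepsilon \to 0} \Hom(\Fstd, T_{-\varepsilon} R^\phi_\theta \Fstd)[1]$. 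Since this colimit is the filtered colimit over $\varepsilon > 0$, the element already lifts to $\Hom(\Fstd, T_{-\varepsilon} R^\phi_\theta \Fstd)[1]$ for some $\varepsilon > 0$, i.e.\ $\tau_{0,0}^{0,\theta}v$ is in the image of $\Gamma_{[\varepsilon,\infty)}(\bR;\cV_{C,\theta})[1]$. Hence $a_\theta \ge \varepsilon > 0$.

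Next I would prove $a_\theta < \pi$. Suppose for contradiction that $a_\theta \ge \pi$, i.e.\ $\tau_{0,0}^{0,\theta} v$ is in the image of $\Gamma_{[\pi,\infty)}(\bR;\cV_{C,\theta})[1]$. Since $T_\pi$ is the shift $[-n]$ (the twisting), a lift in the $[\pi,\infty)$-part corresponds to a factorization of $\tau_{0,0}^{0,\theta}v$ through the morphism $T_{-\pi} R^\phi_\theta \Fstd \to R^\phi_\theta \Fstd$. The obstruction is that $\tau_{0,0}^{0,\theta}v$ then becomes divisible (in the persistence sense) by the $T_\pi$-translation, and I would push this down via the continuation morphism $\colim \tau_{-\varepsilon,-\varepsilon}^{\theta,\pi}$ to $R_\pi \Fstd$: the image of $\tau_{0,0}^{0,\theta}v$ there is exactly $\colim \tau_{-\varepsilon,-\varepsilon}^{\theta,\pi} w^\theta v$ up to the identifications used in \cref{lem:colim_nonzero}, which that lemma asserts is non-zero in $\colim_{\varepsilon\to 0}\Hom(\Fstd, T_{-\varepsilon}R_\pi\Fstd)[2]$. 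But if $\tau_{0,0}^{0,\theta}v$ lifts to the $[\pi,\infty)$-filtration level, then combined with \cref{cor:pi_torsion} — which says the natural map $\Fstd \to T_\pi \Fstd$ is zero — the composite with a further $v$ would have to vanish in the colimit, contradicting \cref{lem:colim_nonzero}. More precisely, I would compare with the standard-circle computation of \cref{prop:standard_structure}: there the bar containing $\tau_{0,0}^{0,\theta}v$ has length strictly less than $\pi$ (it sits in a $\triangledown'$ or $\vartriangle'$ triangle of width $\le \pi$), and the $C^0$-limit/interleaving argument of \cref{lem:colim_nonzero} transfers this bound to $C$.

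The main obstacle I anticipate is making the second inequality rigorous: one must correctly identify what "$a_\theta \ge \pi$" means at the level of the persistence module $H^*(\cV_{C,\theta})$ and relate it to the non-vanishing of the doubly-$v$-multiplied, $\theta$-to-$\pi$-continued element. The cleanest route is probably to argue that $a_\theta \ge \pi$ forces $\tau_{0,0}^{0,\theta}v$ to lie in the image of $T_\pi$, hence (by \cref{cor:pi_torsion} applied after transporting along $R^\phi_\theta$ and the fact that $R_\pi = R^\phi_\pi$ acts as $[1]$ with $\id\to R_\pi$ factoring as in \cref{lem:v_continuation}) to become null after one more multiplication by $v$ and continuation to $\pi$ — directly contradicting \cref{lem:colim_nonzero}. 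I would therefore organize the proof as: (i) $a_\theta>0$ from \cref{lem:action_zero_smalltheta} and the colimit being filtered; (ii) $a_\theta<\pi$ by contradiction, transporting a hypothetical $[\pi,\infty)$-lift along $\colim\tau^{\theta,\pi}_{-\varepsilon,-\varepsilon}$ and $\circ v$ and invoking \cref{lem:colim_nonzero} for non-vanishing together with the fact that a $[\pi,\infty)$-lift would kill it.
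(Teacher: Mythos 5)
Your first half ($a_\theta>0$) is correct and is exactly the paper's argument: \cref{lem:action_zero_smalltheta} kills the image of $\tau_{0,0}^{0,\theta}v$ in $\Gamma(\{\tau>0\};\muhom(\Fstd,R^\phi_\theta\Fstd))[1]$, the fiber sequence lifts it to the colimit, and filteredness of the colimit gives a lift at some finite level $\varepsilon_\theta>0$.

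The second half has a genuine gap. Since $a_\theta$ is a supremum, the hypothesis $a_\theta\ge\pi$ only gives lifts of $\tau_{0,0}^{0,\theta}v$ to $\Gamma_{[\pi-\delta,\infty)}(\bR;\cV_{C,\theta})[1]$ for every $\delta>0$, never a lift at level $\ge\pi$; so your intended factorization through $T_{-\pi}R^\phi_\theta\Fstd\to R^\phi_\theta\Fstd$ (the map killed by \cref{cor:pi_torsion}) is not available, because pushing a level-$(\pi-\delta)$ lift down to level $0$ only involves a translation by $\pi-\delta<\pi$, on which \cref{cor:pi_torsion} says nothing. The fallback contradiction with \cref{lem:colim_nonzero} also does not materialize: taking $w^\theta$ to be the image of such a lift only shows that $\colim\tau^{\theta,\pi}_{-\varepsilon,-\varepsilon}w^\theta v$ comes from filtration level $\pi-\delta$, and there is no reason this forces it to vanish --- indeed, by \cref{prop:standard_structure}, $H^2\Hom(\Fstd,T_{-a}R_\pi\Fstd)\neq 0$ for all $a\in(0,\pi]$, so elements at filtration level arbitrarily close to $\pi$ survive continuation to $\theta=\pi$; likewise your claim that for the standard circle the relevant bar has length strictly less than $\pi$ is false (the bars of $\cV_{\cstd,\theta}$ have length exactly $\pi$), and an interleaving with a $C^0$-approximation cannot transfer a strict bound anyway. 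What the paper uses instead, and what your proposal is missing, is the limit constructibility of $\cV_{C,\theta}$ (from \cref{prop:Jordan_Cauchy}) together with the structure theorem for limit constructible sheaves \cite[Cor.~B.12]{GV24coisotropic}: since $\tau_{0,0}^{0,\theta}v\neq 0$ by \cref{lem:non_zero_alltheta}, this guarantees that its image in $\Gamma_{[-\varepsilon,\infty)}(\bR;\cV_{C,\theta})[1]$ is still non-zero for some small $\varepsilon>0$; then a lift at level $\pi-\varepsilon$ would map to that image through the full $\pi$-translation continuation $\Gamma_{[\pi-\varepsilon,\infty)}\to\Gamma_{[-\varepsilon,\infty)}$, which is zero by \cref{cor:pi_torsion}, a contradiction, whence $a_\theta\le\pi-\varepsilon<\pi$. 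Some input of this kind, ruling out the boundary behavior at the endpoint $\pi$ by persisting the class slightly below $0$, is essential and cannot be replaced by \cref{lem:colim_nonzero} alone.
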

\begin{proof}
    By the argument before \cref{lem:wtheta_independence}, the element $\tau_{0,0}^{0,\theta} v$ comes from $\Gamma_{[\varepsilon_\theta,\infty)}(\bR;\cV_{C,\theta})[1]$ for some $\varepsilon_\theta>0$, which shows $a(\theta,C)>0$.

    By \cref{prop:Jordan_Cauchy} in the next section, the object $\cV_{C,\theta} \in \cT(\pt)$ is limit constructible. 
    By \cref{prop:barcode_decomposition}, we find that $v$ is non-zero in $\Gamma_{[-\varepsilon,\infty)}(\bR;\cV_{C,\theta})[1]$ for a sufficiently small $\varepsilon>0$.
    By \cref{cor:pi_torsion}, $v$ does not come from $\Gamma_{[\pi-\varepsilon,\infty)}(\bR;\cV_{C,\theta})[1]$, which proves $a(\theta,C)<\pi$.
\end{proof}

We will finish the proof of \cref{thm:main_sheaves}.
The object $\cV_{C,\theta}$ has a non-zero microstalk over $a(\theta,C)$, which implies $\DMS(F_{\cur})\cap T_{-a(\theta,C)} \DMS(R_\theta F_{\cur})\neq \varnothing$.
By the assumption and \cref{lem:spectral_invariant}, we find that $\DMS(F_{\cur})\cap T_{-a(\theta,C)} \DMS(F_{\cur})=\varnothing$. 
Thus, we have $(\DMS(F_{\cur})\cap T_{-a(\theta,C)} \DMS(R_\theta F_{\cur})) \setminus \rho^{-1}(\Delta_C) \neq \varnothing$, which corresponds to $\theta$-rectangles on $\cur$. 
This completes the proof of \cref{thm:main_sheaves}.

\section{Jordan curves}\label{section:curves}

In this section, we deduce \cref{thm:main_intro} from \cref{thm:main_sheaves}. 
We also deduce \cref{cor:intro_rectifiable,cor:intro_locallymonotone} from \cref{thm:main_intro}. 
Throughout this section, we let $\bD_q$ be the open disk $\{z\in \bC\mid |z|<q \}$ in $\bC \simeq \bR^2$ for $q>0$.
We also set $\bA_q\coloneqq \{z\in \bC \mid  q<|z|<1\}$ for $q \in (0,1)$.
For a Jordan curve $\cur$, we let $A(\cur)$ denote the area of the open domain bounded by $\cur$.

\subsection{Proof of the main theorem}\label{subsec:proof_main_theorem}

For a proof of \cref{thm:main_intro}, we first prove the following:

\begin{proposition}\label{prop:Jordan_Cauchy}
    Let $(c_n \colon S^1 \to \bR^2)_n$ be a sequence of smooth curves.
    Assume that $(c_n)_n$ converges to a Jordan curve $c$ in the $C^0$-sense and the area of the domain bounded by $C_n=c_n(S^1)$ and $C=c(S^1)$ are $\pi$, that is, $A(C_n)=\pi$ and $A(C)=\pi$. 
    Then the sequence of sheaf quantizations $(F_{C_n})_n$ is a Cauchy sequence (after translated to the $\bR_t/\pi \bZ$-direction), whose limit object $F$ is limit constructible.  

    Moreover, if there exists a Hamiltonian homeomorphism with compact support $\phi$ such that $C=\phi(S^1)$, then $F \simeq F_{\cur} \coloneqq \cK(\phi \times \phi)F_{\cstd}$.
\end{proposition}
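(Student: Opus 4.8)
The plan is to establish the Cauchy property for $(F_{C_n})_n$ by bounding the interleaving distance $d(F_{C_n}, F_{C_m})$ in terms of the $C^0$-distance of the curves plus the difference of the primitives, and then identify the limit. The key observation is that $F_{C_n} = \cK(\phi_n \times \phi_n)\Fstd$ for some Hamiltonian diffeomorphism $\phi_n$ carrying $\cstd$ to $C_n$, so by \cref{lem:T-linear} (or rather the isometry property of $\cK(\phi\times\phi)\ostar(\mathchar`-)$ on the Tamarkin category) the distance $d(F_{C_n}, F_{C_m})$ is controlled by the distance between the two kernels $\cK(\phi_n \times \phi_n)$ and $\cK(\phi_m \times \phi_m)$ — equivalently, after composing, by $d(\cK(\id), \cK((\phi_n\times\phi_n)^{-1}(\phi_m\times\phi_m)))$. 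By the spectral-norm estimate $\gamma \le C' d_{C^0}$ recalled in the Hamiltonian action subsection, this is bounded by $C' d_{C^0}((\phi_n\times\phi_n)^{-1}(\phi_m\times\phi_m), \id)$ on a fixed compact set, which goes to zero once the curves converge uniformly. However, the area normalization and the condition on the primitives $f_n$ are exactly what is needed to pin down the \emph{translation in the $\bR/\pi\bZ$-direction}: the Tamarkin coordinate $t$ records the symplectic action/primitive, so a careful choice of the $\phi_n$ (or of representatives of the $T_a$-shift) makes the sequence Cauchy on the nose rather than merely up to translation.

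First I would recall that each smooth Jordan curve $C_n$ (with enclosed area $\pi$) bounds a disk, and by a Moser-type argument there is a compactly supported Hamiltonian diffeomorphism $\phi_n$ of $T^*M = \bR^2$ with $\phi_n(\cstd) = C_n$; moreover the primitive of $(c_n\circ e)^*\lambda$ is, up to the chosen $\phi_n$, the function controlling the $t$-coordinate of $\DMS(F_{C_n})$. Second, I would estimate $d(F_{C_n}, F_{C_{n+1}})$: using that $\cK(\psi)\ostar(\mathchar`-)$ is a $\Tam$-linear autoequivalence and isometric for $d$ (which follows from \cref{lem:T-linear} applied to $\cHom^\star$, since $d$ is read off from $q_*\cHom^\star$), reduce to $d(\Fstd, \cK((\phi_n\times\phi_n)^{-1}(\phi_{n+1}\times\phi_{n+1}))\Fstd) \le d(\cK(\id), \cK((\phi_n\times\phi_n)^{-1}(\phi_{n+1}\times\phi_{n+1}))) = \gamma((\phi_n\times\phi_n)^{-1}(\phi_{n+1}\times\phi_{n+1}), \id) \le C' d_{C^0}(\cdots)$. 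Convergence of $(c_n)$ in $C^0$ together with convergence of the primitives $f_n$ gives that one can choose the $\phi_n$ so that this $C^0$-distance (on a fixed compact set containing all the curves) tends to zero, hence $(F_{C_n})_n$ is Cauchy after the appropriate $\bR/\pi\bZ$-translation. By completeness of $\Tam(T^*M^2)$ (equivalently of $\cT$) with respect to $d$, there is a limit $F$, and since each $F_{C_n}$ is constructible, $F$ is limit constructible by definition.

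For the last sentence: suppose $C = \phi(\cstd)$ for a compactly supported Hamiltonian homeomorphism $\phi$, and let $\phi_n$ be compactly supported Hamiltonian diffeomorphisms with $\phi_n \to \phi$ in $C^0$ and $\phi_n(\cstd) = C_n \to C$. Then $F_{\cur} = \cK(\phi\times\phi)\Fstd = \colim$-type limit: more precisely, $d(F_{\cur}, F_{C_n}) = d(\cK(\phi\times\phi)\Fstd, \cK(\phi_n\times\phi_n)\Fstd) \le d(\cK(\phi\times\phi), \cK(\phi_n\times\phi_n)) \to 0$ by the same spectral estimate extended to Hamiltonian homeomorphisms (which is how $\cK(\phi)$ was defined in the first place). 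So $F$ and $F_{\cur}$ are both metric limits of $(F_{C_n})_n$, hence $d(F, F_{\cur}) = 0$; since both are limit constructible, \cite[Prop.~B.8]{GV24coisotropic} gives $F \simeq F_{\cur}$.

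\textbf{The main obstacle} I expect is the bookkeeping of the $\bR/\pi\bZ$-translation, i.e.\ making the Cauchy statement precise: a priori $d(F_{C_n}, F_{C_m})$ could fail to be small only because the natural primitives differ by a constant that drifts, so one must use hypothesis (2) — uniform convergence of the primitives $f_n$ of $(c_n\circ e)^*\lambda$ — to normalize the $t$-coordinate consistently. Concretely, the $t$-coordinate of the points of $\DMS(F_{C_n})$ over a fixed $x$ is governed by $f_n$, and the distance estimate above is really an estimate for $T_{a_n}F_{C_n}$ with $a_n$ a shift determined by the primitives; showing $a_n$ converges (mod $\pi$) is where condition (2) enters. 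A secondary technical point is justifying that $\cK(\psi)\ostar(\mathchar`-)$ is genuinely isometric for the interleaving pseudo-distance $d$ on $\Tam(T^*M^2)$, but this is immediate from \cref{lem:T-linear} since $d(F,G)$ is defined through $\Gamma_{[-a,\infty)}(\bR;\ell^! q_*\cHom^\star(F,G))$ and $q_*\cHom^\star$ is preserved.
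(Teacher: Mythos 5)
There is a genuine gap at the heart of your argument for the Cauchy property. You reduce $d(F_{C_n},F_{C_m})$ to $\gamma\bigl((\phi_n\times\phi_n)^{-1}(\phi_m\times\phi_m),\id\bigr)\le C'd_{C^0}(\cdots)$ and then assert that ``one can choose the $\phi_n$ so that this $C^0$-distance tends to zero.'' This is exactly the hard point, and nothing in the hypotheses supplies it: $C^0$-convergence of the parametrized curves $c_n$ (even with the area normalization, and even granting convergence of the primitives) does not produce compactly supported Hamiltonian diffeomorphisms $\phi_n$ with $\phi_n(\cstd)=C_n$ that form a $C^0$-Cauchy family. Note that the proposition does not assume the image of $c$ has measure zero, so the limit curve need not be $\phi(\cstd)$ for \emph{any} homeomorphism $\phi$; a statement of the form ``two $C^0$-close smooth Jordan curves of equal enclosed area are related by a $C^0$-small compactly supported Hamiltonian diffeomorphism'' is a nontrivial $C^0$-flexibility claim that you neither prove nor cite. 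You also import condition (2) on the primitives $f_n$, which is not a hypothesis of this proposition; in the paper that condition is used only later, to control the microsupport of the limit in the proof of \cref{thm:main_intro}, not to prove Cauchyness.

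The paper's proof avoids choosing $C^0$-convergent extensions altogether: it bounds the distance by the \emph{area} between the curves rather than by any $C^0$-distance of maps. Concretely, one takes an exhaustion $D_a\subset D$ with smooth boundaries and area $\pi a^2$ (Riemann mapping plus Carath\'eodory), notes that for large $n$ the curve $C_n$ lies outside $\overline{D_a}$, uniformizes the annulus between $\partial D_a$ and $C_n$, sweeps it by the holomorphic circles $C_t=\varphi(\partial\bD_t)$ rescaled by the radial flow $\phi^{d\theta}$ to enclosed area $\pi$, and shows (\cref{lem:annulus_energy}) that the explicit Hamiltonian isotopy carrying $C'_{t_0}$ to $C'_{t_1}$ has Hofer norm at most $2(A(C_{t_1})-A(C_{t_0}))$, whence $d(F_{C'_{t_0}},F_{C'_{t_1}})\le 4(A(C_{t_1})-A(C_{t_0}))$ and $d(F_{C_n},F_{C_m})\le 8\pi(1-a^2)$. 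Your treatment of the ``moreover'' part is close to the paper's in spirit (limit constructibility, zero distance, \cite[Prop.~B.8]{GV24coisotropic}), but as written it again presupposes diffeomorphisms with both $\phi_n\to\phi$ in $C^0$ and $\phi_n(\cstd)=C_n$; the paper instead takes an arbitrary $C^0$-approximating sequence $\phi_n\to\phi$, interleaves $(F_{C_n})_n$ with $(\cK(\phi_n\times\phi_n)\Fstd)_n$, and applies the area-based part (a) to the combined sequence of curves to conclude the limits coincide. So the correct mechanism for the first (and main) assertion is missing from your proposal.
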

\begin{proof}
    We may assume that the origin is bounded by $C_n$ for all $n$.
    
    \noindent (a) First we will prove $(F_{C_n})_n$ is a Cauchy sequence.

    Let $D$ be the open domain bounded by $\cur$.
    We take a biholomorphism $\psi \colon \bD_1 \to D$ with $\psi(0)=0$ and extend it to a homeomorphism $\overline{\psi} \colon \overline{\bD_1} \to \overline{D}$ by the Riemann mapping theorem and the Carath\'eodory theorem. 
    There is a strictly increasing function $g \colon (0,1] \to (0,1]$ such that the area $D_a \coloneqq \psi(\bD_{g(a)})$ is $\pi a^2$. 
    Then, the family of open subdomains $(D_a)_{a \in (0,1]}$ satisfy the following:
    \begin{itemize}
        \item if $a < a'$, then $\overline{D_a} \subset D_{a'}$; 
        \item for each $a\in (0,1)$, the boundary $\partial D_a$ is a smooth Jordan curve;
        \item there exists a positive real number $L$ such that 
        \[
            \frac{1}{2\pi} \lb \max_{\{a\} \times [0,2\pi]} \widetilde{\theta_\psi} -\min_{\{a\} \times [0,2\pi]} \widetilde{\theta_\psi} \rb \le L_\psi
        \]    
        for any $a \in (0,1]$.
        Here $\widetilde{\theta_\psi} \colon (0,1] \times [0,2\pi] \to \bR$ denotes a lift of 
        \[
            (0,1] \times [0,2\pi] \xrightarrow{(r,\theta) \mapsto r e^{\sqrt{-1}\theta}} \bD \setminus \{0\} \xrightarrow{\overline{\psi}} D\setminus\{0\} \xrightarrow{\theta} \bR/2\pi \bZ,
        \]
        where $\theta$ denotes the locally defined argument.
        This $L_\psi$ depends only on $\psi$. 
    \end{itemize}
    To prove the last claim, we take $\varepsilon >0$ and consider the annulus $\bA_\varepsilon = \{z\in \bC \mid  \varepsilon<|z|<1\}$.
    Then we can apply \cref{lem:argument_osc} below to get 
    \[
        \frac{1}{2\pi} \lb \max_{\{a\} \times [0,2\pi]} \widetilde{\theta_\psi} -\min_{\{a\} \times [0,2\pi]} \widetilde{\theta_\psi} \rb \le L,
    \]
    where $L$ can be chosen so that
    \[
        L \le \frac{1}{2\pi} \max \lc \max_{\{\varepsilon\} \times [0,2\pi]} \widetilde{\theta_\psi} -\min_{\{\varepsilon\} \times [0,2\pi]} \widetilde{\theta_\psi}, 
        \max_{\{1\} \times [0,2\pi]} \widetilde{\theta_\psi} -\min_{\{1\} \times [0,2\pi]} \widetilde{\theta_\psi} 
        \rc +1.
    \]
    Since $\psi$ is differentiable at $0$, given $\delta>0$, there exists a sufficiently small $\varepsilon>0$ such that $\max_{\{a\} \times [0,2\pi]} \widetilde{\theta_\psi} -\min_{\{a\} \times [0,2\pi]} \widetilde{\theta_\psi} \le 2\pi+\delta$ for any $a \in (0,\varepsilon]$. 
    It suffices to define
    \[
        L_\psi \coloneqq \frac{1}{2\pi} \max \lc 2\pi, 
        \max_{\{1\} \times [0,2\pi]} \widetilde{\theta_\psi} -\min_{\{1\} \times [0,2\pi]} \widetilde{\theta_\psi} 
        \rc +1,
    \]
    which proves the claim.    
    
    Take $a<1$ that is sufficiently close to $1$. 
    By the $C^0$-convergence, there exists $N$ such that if $n \ge N$ then $C_n$ is included in the complement of $\overline{D_a}$.
    Let $A_{a,n}$ be the domain between $\partial D_a$ and $C_n$.
    Note that the area of $A_{a,n}$ is $\pi(1-a^2)$.     
    There exist a unique real number $q\in (0,1)$ such that the standard annulus $\bA_q = \{z\in \bC \mid  q<|z|<1\}$ is biholomorphic to the open domain $A_{a,n}$. 
    Take a biholomorphism $\varphi_n \colon \bA_{q} \to A_{a,n}$ so that the continuous extension $\overline{\varphi_n} \colon \overline{\bA_{q}} \to \overline{A_{a,n}}$ of $\varphi_n$ satisfies
    \begin{itemize}
        \item $\overline{\varphi_n}$ sends $\partial \bD_1$ to $C_n$;
        \item $\overline{\varphi_n}$ sends $q \in \partial \bD_q$ to $\psi(g(a)) \in \partial D_a$, where $g(a)$ is regarded as a point on $\partial \bD_{g(a)}$.
    \end{itemize}
    Since the boundary components of $A_{a,n}$ are smooth curves, $\overline{\varphi_n}$ is smooth also at the boundaries by \cite[Chapter~II. Cor.~4.6]{GM05harmonic}. 
    Let $\widetilde{\theta_n} \colon (q,1)\times [0,2\pi] \to \bR$ be a lift of $(q,1)\times [0,2\pi] \to \bA_q \xrightarrow{\varphi_n} A_{a,n} \xrightarrow{\theta} \bR/2\pi \bZ$.   
    By the condition of $\overline{\varphi_n}$, we have 
    \[
        \max_{\{q\} \times [0,2\pi]} \widetilde{\theta_n} -\min_{\{q\} \times [0,2\pi]} \widetilde{\theta_n} =  \max_{\{q\} \times [0,2\pi]} \widetilde{\theta_\psi} -\min_{\{q\} \times [0,2\pi]} \widetilde{\theta_\psi}.
    \]
    Since $(c_n)_n$ converges $c$ in the $C^0$-sense, there exists a sequence of self-homeomorphisms $(\sigma_n)_n$ of $\partial \bD_1$ such that $\overline{\varphi_n} \circ \sigma_n$ converges to $\overline{\psi}|_{\partial \bD_1}$ in the $C^0$-sense. 
    Take a lift $\widetilde{\theta_n'}$ of $[0,2\pi] \to \partial \bD_1 \xrightarrow{\overline{\varphi_n} \circ \sigma_n} \overline{\varphi_n}(\partial \bD_1) \xrightarrow{\theta} \bR/2\pi \bZ$.  
    We will prove the inequality
    \begin{equation}\label{eq:argument_ineq}
        \left| \left( \max_{\{1\} \times [0,2\pi]} \widetilde{\theta_n} -\min_{\{1\} \times [0,2\pi]} \widetilde{\theta_n} \right) - \left( \max_{[0,2\pi]} \widetilde{\theta_n'} -\min_{[0,2\pi]} \widetilde{\theta_n'} \right) \right| \le 2\pi.
    \end{equation}
    By abuse of notation, we also write $\widetilde{\theta_n}$ for a lift of $\bR \xrightarrow{\theta \mapsto e^{\sqrt{-1}\theta}} \partial \bD_1 \to \overline{\varphi_n}(\partial \bD_1) \to \bR/2\pi \bZ$ to $\bR \to \bR$. 
    Then, we get
    \[
        \max_{\{1\} \times [0,2\pi]} \widetilde{\theta_n} -\min_{\{1\} \times [0,2\pi]} \widetilde{\theta_n}  + 2\pi = \max_{\{1\} \times [0,4\pi]} \widetilde{\theta_n} -\min_{\{1\} \times [0,4\pi]} \widetilde{\theta_n}.
    \]
    Moreover, there exists $b \in [0,2\pi]$ satisfying
    \[ 
        \max_{[0,2\pi]} \widetilde{\theta_n'} -\min_{[0,2\pi]} \widetilde{\theta_n'} = \max_{\{1\} \times [b,b+2\pi]} \widetilde{\theta_n} -\min_{\{1\} \times [b,b+2\pi]} \widetilde{\theta_n},
    \]
    which proves the inequality~\eqref{eq:argument_ineq}.       
    By \eqref{eq:argument_ineq} and the $C^0$-convergence, for a sufficiently large $n$, we have
    \[
        \left| \left( \max_{\{1\} \times [0,2\pi]} \widetilde{\theta_n} -\min_{\{1\} \times [0,2\pi]} \widetilde{\theta_n} \right) - \left( \max_{\{1\} \times [0,2\pi]} \widetilde{\theta_\psi} -\min_{\{1\} \times [0,2\pi]} \widetilde{\theta_\psi} \right) \right| \le 2.1\pi.
    \]
    Thus, setting $L' \coloneqq L_\psi + 2.1/2$, by \cref{lem:argument_osc}, we have
    \[
        \frac{1}{2\pi} \lb \max_{\{u\} \times [0,2\pi]} \widetilde{\theta_n} -\min_{\{u\} \times [0,2\pi]} \widetilde{\theta_n} \rb \le L'
    \]
    for any $u \in (q,1)$.

    Let $\partial D_a'$ be the curve $\partial D_a$ rescaled by the flow $\phi^{d\theta}$ defined below so that $A(\partial D_a')=\pi$.
    For $u \in (q,1)$, put $C_u \coloneqq \varphi (\partial \bD_u)$ and let $ C_u'$ be the curve rescaled by the flow $\phi^{d\theta}$ so that $A(C_u')=\pi$.
    By \cref{lem:annulus_energy} below, for a sequence $(a_i)_i$ of real numbers in $(q, 1)$ converging to $q$ from above, the sequence of constructible sheaves $(F_{C_{a_i}'})_i$ is Cauchy.  
    
    We see that the limit object $F'$ of $(F_{C_{a_i}'})_i$ is isomorphic to $F_{\partial D_a'}$ as follows. 
    By the microsupport estimate for the limit object, the microsupport of $F'$ coincides with that of $F_{\partial D_a'}$ since $\overline{\varphi_n}$ is smooth also at the boundaries. 
    By taking a compactly supported Hamiltonian diffeomorphism sending $\partial D_a'$ to $\cstd$ and applying the corresponding GKS kernel to $F_{\partial D_a'}$ and $F'$, the assertion $F_{\partial D_a'} \simeq F'$ follows from \cref{lem:Fcstdunique}. 
    Similarly, for a sequence $(a_i)_i$ of real numbers in $(q, 1)$ converging to $1$ from below, the sequence of constructible sheaves $(F_{C_{a_i}'})_i$ is Cauchy and converges to $F_{C_n}$. 
    
    Again by \cref{lem:annulus_energy}, for any $q< u_0 < u_1<1$, 
    \[
        d(F_{C_{u_0}'}, F_{C_{u_1}'}) \le 2(L'+1)(A(C_{u_1})-A(C_{u_0})) \le 2(L'+1)\pi(1-a^2).
    \]
    By tanking limits, we obtain
    \[
        d(F_{\partial D_a'}, F_{C_n}) \le 2(L'+1)\pi(1-a^2).
    \]
    Hence, for $m,n \ge N$, we have 
    \[
        d(F_{C_n}, F_{C_m}) \le 4(L'+1)\pi(1-a^2),
    \]
    which proves that $(F_{C_n})_n$ is a Cauchy sequence.    
    Since each $F_{C_n}$ is limit constructible, a limit object $F$ is also limit constructible.
    \smallskip

    \noindent (b) Let us prove the second assertion and suppose that $\cur=\phi(S^1)$ for some Hamiltonian homeomorphism with compact support $\phi$. 
    Then there exists a sequence of Hamiltonian diffeomorphisms $(\phi_n)_n$ that converges to $\phi$ in the $C^0$-sense. 
    The sequence $(\cK(\phi_n \times \phi_n)F_{\cstd})_n$ is a Cauchy sequence, and its limit object is $\cK(\phi \times \phi)F_{\cstd}$ by definition. 
    Then the sequence $(F_k)_k$ with 
    \[
        F_k = 
        \begin{cases}
            F_{C_n} & (k=2n-1), \\
            \cK(\phi_n \times \phi_n) F_{\cstd} & (k=2n)
        \end{cases}
    \]
    is also a Cauchy sequence. 
    Since each pair of the limit objects of the three sequences $(F_{C_n})_n$, $(\cK(\phi_n \times \phi_n)F_{\cstd})_n$, and $(F_k)_k$ has distance zero, we conclude that $F \simeq \cK(\phi \times \phi)F_{\cstd}$ by the limit constructibility and \cref{prop:limit_constructible}. 
\end{proof}

We fix some notation. 
Let $g$ be the standard metric on $\bC$ and set $\omega \coloneqq d\lambda=d\xi \wedge dx$ be the symplectic form on $\bC \simeq T^*\bR_x$. 
We have $\omega(X, Y) = g(X, \sqrt{-1}Y)$. 
Let $r, \theta \colon \bC \setminus \{0\} \to \bR$ be the radius and the (locally defined) argument. 
We remark that $d\theta(X) = - \frac 1r dr(\sqrt{-1}X)$, for all $X$.
For a smooth function $f$ (locally defined) on $\bC$, we let $\nabla_f$ be the gradient vector field with respect to $g$ and $X_f$ the Hamiltonian vector field. 
For a $1$-form $\alpha$ (locally defined) on $\bC$, we let $X_\alpha$ be the symplectic vector field with respect to $\omega$. 
We have $g(\nabla_f, X) = df(X)$, $\omega(X_\alpha, X) = -\alpha(X)$, for all $X$. 
We write $\phi^\alpha$ for the symplectic isotopy generated by $X_\alpha$.
We obtain 
\[
    \omega(X_{d\theta}, X) = -d\theta(X) =  \frac 1r dr(\sqrt{-1}X) =  \frac 1r g(\nabla_r, \sqrt{-1}X)
    = \frac 1r \omega(\nabla_r, X)
\]
and thus $X_{d\theta} = \frac 1r \nabla_r$. 
We deduce an expression of the symplectic isotopy $\phi^{d\theta}_{s}$ in the coordinates $(r,\theta)$:
\[
    \phi^{d\theta}_{s}(r, \theta) = (\sqrt{2s + r^2}, \theta).
\]

\begin{lemma}\label{lem:argument_osc}
    Let $\varphi \colon \bA_{q} \to \bC$ be a biholomorphism onto its image $A$. Assume that $\varphi$ admits a continuous extension $\overline{\varphi} \colon \overline{\bA_{q}} \to \overline{A}$ and $0\notin \overline A$. 
    Let $\tilde{\theta} \colon [q,1] \times [0,2\pi] \to \bR$ be a lift of $[q,1] \times [0,2\pi] \xrightarrow{\overline{\varphi}} \bA_q\to A \xrightarrow{\theta} \bR/2\pi \bZ$. 
    Then, there exists a positive real number $L \in \bR_{>0}$ such that 
    \[
        \frac{1}{2\pi} \lb \max_{\{u\} \times [0,2\pi]} \tilde{\theta} -\min_{\{u\} \times [0,2\pi]} \tilde{\theta} \rb \le L
    \]
    for any $u \in [q,1]$.
    This $L$ can be chosen so that 
    \[
        L \le \frac{1}{2\pi} \max \lc \max_{\{q\} \times [0,2\pi]} \tilde{\theta} -\min_{\{q\} \times [0,2\pi]} \tilde{\theta}, 
        \max_{\{1\} \times [0,2\pi]} \tilde{\theta} -\min_{\{1\} \times [0,2\pi]} \tilde{\theta} 
        \rc +1.
    \]
\end{lemma}
\begin{proof}
    By abuse of notation, we write $\theta$ for $(q,1) \times [0,2\pi] \to \bA_q \xrightarrow{\theta} \bR$.
    Let $\theta' \colon (q,1) \times [0,2\pi] \to \bR$ denote the second projection.
    Then the function $\tilde{\theta} -\theta'$ defines a harmonic function on $\bA_q$. 
    Let $I_u \subset \bR$ be the image of $\partial \bD_u$ under $\tilde{\theta} -\theta'$. 
    We may assume $I_q \subset I_1$ or $I_1 \subset I_q$ by adding a harmonic function of the form $c \log r \ (c \in \bR)$ if necessary. 
    Note that this does not change the length of each $I_u$.

    By the maximum principal, $I_u$ is contained in $I_q \cup I_1$. 
    Since the values of $\theta'$ is contained in $[0,2\pi]$, the oscillation is less than or equal to $\max\{|I_q|,|I_1|\}+2\pi$, where $|I|$ denotes the length of a interval $I\subset \bR$.
\end{proof}

The essential part of the proof of the following lemma is due to St\'{e}phane Guillermou.

\begin{lemma}\label{lem:annulus_energy}
    Let $\varphi \colon \bA_{q} \to \bC$ be a biholomorphism onto its image $A$ and let $L$ be a positive real number satisfying the inequality in \cref{lem:argument_osc}. 
    For $u \in (q,1)$, set $C_u \coloneqq \varphi(\partial \bD_u)$ and assume  $A(C_{u})\le \pi$ for all $u \in (q,1)$.
    Define $C'_u$ to be the curve rescaled by $\phi^{d\theta}$ defined above such that $A(C'_u)=\pi$.
    Then, for $q < u_0 < u_1 < 1$, one has
    \[
        d(F_{C'_{u_0}},F_{C'_{u_1}}) \le 2(L+1)(A(C_{u_1})-A(C_{u_0}))
    \]
    after translating $F_{C'_{u_0}}$ by some constant to the $\bR_t/\pi \bZ$-direction.
\end{lemma}
\begin{proof}
    We may assume that $0 \in \bC$ is contained in the open domain bounded by $C_u$ for all $u \in (q,1)$. 
    We set $r' = r \circ \varphi^{-1}$, $\theta' = \theta \circ \varphi^{-1} \colon A \to \bR_x$.  
    Hence $C_u = r'^{-1}(u)$.
    Since $\varphi$ is biholomorphic, we obtain $X_{d\theta'} = \frac 1{r'} \nabla_{r'}$.

    In the following steps from (a) to (d), we will construct a Hamiltonian diffeomorphism that sends $C'_{u_0}$ to $C'_{u_1}$ and estimate the distance $d(F_{C'_{u_0}},F_{C'_{u_1}})$ with the Hamiltonian diffeomorphism.

    \noindent (a) First we will define a time-dependent closed $1$-form $\alpha=(\alpha(s))_{s \in [0,u_1-u_0]}$ on $A$ such that the flow of its symplectic vector field $\phi^{\alpha}$ satisfies $\phi^{\alpha}_s(C_{u_0}) = C_{{u_0}+s}$ for $s \in [0,u_1-u_0]$. 
    This condition is satisfied if $dr'(X_{\alpha(s)})= 1$ on $C_{{u_0}+s}$ for $s \in [0,u_1-u_0]$.
    We define a function $k$ that depends only on $s$ and $\theta'$ by 
    \[
        k(s,\theta') \coloneqq \frac{u_0+s}{\| dr' \|^2},
    \]
    where $\|dr'\|^2$ is a time-dependent function on $A$ that maps $(r'_1,\theta'_1)$ to the value of $\|dr'\|^2$ at $(u_0+s, \theta'_1)$.
    We define
    \[
        \alpha(s) \coloneqq k(s,\theta') d\theta'.
    \]
    Then, on $C_{u_0+s}$ we have
    \[
        X_{\alpha(s)} = k(s,\theta') X_{d\theta'},
    \]
    which implies $dr'(X_{\alpha(s)})= 1$. 
    Moreover, we have $d\theta'(X_{\alpha(s)})=0$ by construction. 

    \noindent (b) 
    Next, we will describe the rescaled curve $C'_u$ more precisely. 
    We have seen that $\phi^{d\theta}_{s}(\partial \bD_u) = \partial \bD_{\sqrt{2s + u^2}}$.
    Hence $A( \phi^{d\theta}_{s}(\partial \bD_u) ) = A(\partial \bD_u) + 2\pi s$.  
    Now, for a general Jordan curve $\cur$ containing $0$ in its interior domain and $\varepsilon>0$ small, $\phi^{d\theta}_{s}$ is defined and symplectic outside $\overline{\bD_\varepsilon}$.
    Hence we deduce the general equality
    \[
        A(\phi^{d\theta}_{s}(C)) = A(C) + 2\pi s.
    \]
    Thus, we can write 
    \[
        C'_u = \phi^{d\theta}_{T(u)}(C_u) \quad \text{with} \quad T(u) \coloneqq \frac{1}{2\pi}(\pi - A(C_u)).
    \]

    \noindent (c) 
    We will construct a Hamiltonian diffeomorphism that sends $C'_{u_0}$ to $C'_{u_1}$.
    We define a symplectomorphism $\psi \coloneqq \phi^{d\theta}_{T(u_0)}$ and a time-dependent closed $1$-form $\beta$ by $\beta(s) \coloneqq (\psi^{-1})^*\alpha(s)$.
    We set $a(s) = A(C_s)$ and define time-dependent function and 1-form 
    \[
        b(s) \coloneqq -\frac{1}{2\pi} \frac{da}{ds}(u_0+s), \quad d\Theta(s) = b(s) d\theta \quad (s \in [0,u_1-u_0]).
    \]
    Since
    \[
        \int_0^s b(s') \, ds' = \frac{1}{2\pi} (a(u_0)-a(u_0+s)) = T(u_0+s)-T(u_0),
    \]
    we obtain $\phi^{d\Theta}_{s} = \phi^{d\theta}_{T(u_0+s)-T(u_0)}$. 
    For $s \in [0,u_1-u_0]$, we define 
    \[
        (d\Theta \sharp \beta)(s)
        \coloneqq 
        d\Theta(s) + ((\phi^{d\Theta}_s)^{-1})^* \beta(s)
        = d\Theta(s) + ((\phi^{d\theta}_{T(u_0+s)-T(u_0)})^{-1})^*\alpha(s),
    \]
    which is a locally defined time-dependent closed $1$-form.
    We find that 
    \begin{align*}
        \phi^{d\Theta \sharp \beta}_{s} 
        & = 
        \phi^{d\Theta}_{s} \circ \phi^{\beta}_{s} \\
        & = \phi^{d\theta}_{T(u_0+s) - T(u_0)} \circ \psi \circ \phi^{\alpha}_{s} \circ \psi^{-1} \\
        & = \phi^{d\theta}_{T(u_0+s)} \circ \phi^{\alpha}_{s} \circ (\phi^{d\theta}_{T(u_0)})^{-1}, 
    \end{align*}
    which sends $C'_{u_0}$ to $C'_{u_0+s}$.
    The exactness of a locally defined closed $1$-form is determined by the integrations along closed curves that generate the first homology group of the domain.  
    Since $A(C'_{u_0+s})=A(C'_{u_0})$, the integration of $(d\Theta \sharp \beta)(s)$ along $C'_{u_0+s}$ is zero. 
    Thus $d\Theta \sharp \beta$ is a time-dependent locally defined exact $1$-form, which can be written as $dh_1$.
    This proves that $\phi^{d\Theta \sharp \beta}_{u_1-u_0}$ is the Hamiltonian diffeomorphism $\phi^{h_1}_{u_1-u_0}$ that sends $C'_{u_0}$ to $C'_{u_1}$.

    \noindent (d) Finally, we will estimate the Hofer norm of $\phi^{h_1}_{u_1-u_0}$.
    We take a smooth cut-off function on $\bC$ and extend $h_1$ to $\bC$ with the cut-off function. 

    For any $z_1,z_2 \in C_{u_0+s}'$, we take a path in $C_{u_0+s}'$ connecting these two points that does not pass $\theta'=0$.
    Then, by integrating $d\Theta \sharp \beta$ along the path, we get
    \begin{align*}
        h_1(s,z_1) - h_1(s,z_2) 
        & \le \frac{1}{2\pi} |b(s)| \lb \max_{\{u_0+s\} \times [0,2\pi]} \tilde{\theta} -\min_{\{u_0+s\} \times [0,2\pi]} \tilde{\theta} \rb 
        +
        \int_{\theta'_1}^{\theta'_2} k(s, \theta') \, d\theta',
    \end{align*}
    where $(u_0+s,\theta_i')$ in the coordinates $(r',\theta')$ corresponds to the point $z_i$ for $i=1,2$. 
    The area bounded by the arcs $\theta'$ is constant or $r'$ is constant joining the points $(u_0, \theta'_i)$, $(u_0+s, \theta'_i)$ for $i=1,2$ is written as  
    \[
        B(s, \theta'_1, \theta'_2) = \int_{r'=u_0}^{r'=u_0+s} \int_{\theta' = \theta'_1}^{\theta' = \theta'_2}
    \omega( \partial_{r'}, \partial_{\theta'}) \, d\theta' dr'.
    \]
    By using $\omega( \partial_{r'}, \partial_{\theta'}) = \omega \lb k X_{d\theta'}, \partial_{\theta'}\rb = k$, we have
    \[
    \frac{\partial B}{\partial s}(s,\theta_1',\theta'_2)
    = \int_{\theta' = \theta'_1}^{\theta' = \theta'_2} \omega( \partial_{r'}, \partial_{\theta'}) \, d\theta'
    = \int_{\theta' = \theta'_1}^{\theta' = \theta'_2} k(s,\theta') \, d\theta'.
    \]
    Since $k(s,\theta') \geq 0$ and $B(s, 0, 2\pi) = a(s) - a(u_0)$, we obtain the bound
    \[
    \int_{\theta'_1}^{\theta'_2} k(s, \theta') \, d\theta' \leq \frac{da}{ds}(s) \quad\text{for any $s$ and $\theta'_1,\theta'_2$.}
    \]
    Combining this inequality with \cref{lem:argument_osc}, we have     
    \[
        h_1(s,z_1) - h_1(s,z_2) \le (L+1) \frac{da}{ds}(u_0+s)
    \]
    Hence, we obtain
    \[
        \int_0^{u_1-u_0} \lb \max_{C'_{u_0+s}} h_1(s) - \min_{C'_{u_0+s}} h_1(s) \rb \, ds \le (L+1) (a(u_1)-a(u_0)).  
    \]
    The bound is equal to $(L+1)(A(C_{u_1})-A(C_{u_0}))$.
    \smallskip

    We will finish the proof of the lemma.
    The time-depending function $(p,p') \mapsto h_1(p,s)+h_1(p',s)$ on $\bC \times \bC$ generates a flow that sends $C'_{u_0} \times C'_{u_0}$ to $C'_{u_1} \times C'_{u_1}$ at time $s=u_1-u_0$.
    Hence, by \cite[Thm.~A.2]{AI24completeness}, there exists $c \in \bR$ such that 
    \begin{align*}
        d(F_{C'_{u_0}}, T_c F_{C'_{u_1}})
        & \le 2 \int_0^{u_1-u_0} \lb \max_{C'_{u_0+s}} h_1(s) - \min_{C'_{u_0+s}} h_1(s) \rb \, ds \\
        & \le 2(L+1)(A(C_{u_1})-A(C_{u_0})).
    \end{align*}
    This completes the proof.
\end{proof}

\begin{remark}
    Note that we can define a sheaf quantization $F_{\cur}$ for any Jordan curve $\cur$ by \cref{prop:Jordan_Cauchy}. 
\end{remark}

\begin{remark}\label{rem:positive_measure}
    Note that there are Jordan curves whose images have positive measure \cite{Lebesgue03, Osgood03}. See also \cite{NV22}.
    If the measure of $\cur$ is non-zero, $\cur$ inscribes a $\theta$-rectangle for any $\theta \in (0,\pi)$ by Lebesgue's density theorem. 
\end{remark}

Now we prove \cref{thm:main_intro}.

\begin{proof}[Proof of \cref{thm:main_intro}]
    We may assume that the measure of $\cur$ is zero by \cref{rem:positive_measure}.
    By scaling, we may also assume that the area of the open domain bounded by $C$ is $\pi$, that is, $A(C)=\pi$.
    Let $(c_n)_n$ be a sequence of smooth Jordan curves that satisfies the conditions in \cref{thm:main_intro}.
    Let $B_n \coloneqq A(C_n)$ be the area of the open domain bounded by $C_n$.
    Since $B_n \to \pi$ as $n \to \infty$, by scaling $C_n$ by a factor of $\sqrt{\pi/B_n}$ with respect to the origin, we may assume $B_n=\pi$ while keeping $(c_n)_n$ converges to $c$.
    By the first part of \cref{prop:Jordan_Cauchy}, the sequence of sheaf quantizations $(F_{C_n})_n$ is a Cauchy sequence, which defines a limit object $F$. 
    Combining the condition~(2) in \cref{thm:main_intro} with \cref{prop:limit_ss}, we find that $T_a\CMS (F)\cap \CMS(F)=\varnothing$ for $a \in \bR \setminus \pi \bZ$.

    Since the measure of $\cur$ is zero, we can construct a Hamiltonian homeomorphism with compact support $\phi$ on $T^*\bR$ such that $\cur=\phi(\cstd)$. 
    Note that the set of compactly supported Hamiltonian homeomorphism coincides with the set of compactly supported area-preserving homeomorphisms, whose proof can be found in \cite{Oh06, Sikorav07}. 
    Such a compactly supported area-preserving homeomorphisms exists by theorems by Sch\"{o}nflies and Oxtoby--Ulam~\cite{OU41}. 
    Then, by the second part of \cref{prop:Jordan_Cauchy}, we have $F \simeq F_{\cur} \coloneqq \cK(\phi \times \phi)F_{\cstd}$. 

    Hence, the result follows from \cref{thm:main_sheaves}.
\end{proof}

\begin{remark}
    The smooth approximation assumed in \cref{thm:main_intro} can be weakened to an approximation by $C^1$-curves. 
    Furthermore, the ``primitive'' for curves satisfying the assumptions of \cref{thm:main_intro} is unique regardless of how the approximating sequence is chosen. 
    This uniqueness follows from the fact that the sheaf quantization is unique and the primitive can be recovered from its conic microsupport. 
    
    It follows the following observation. 
    Let $(c_n\colon S^1\to \bR^2)_n$ be a sequence of continuous Jordan curves with 
    \begin{enumerate}
        \item[(1)] $(c_n)$ converges to a Jordan curve $c$ in the $C^0$-sense,
        \item[(2)] each $c_n$ satisfies the assumption of \cref{thm:main_intro} and hence  ``primitive'' $f_n$ is determined up to constant.  
        \item[(3)] $(f_n)_n$ converges to a continuous function $f$ uniformly on every compact subset. 
    \end{enumerate}
    Then the Jordan curve $c$ satisfies the assumptions of \cref{thm:main_sheaves}. 
\end{remark}

\begin{remark}
    As mentioned in \cref{rem:positive_measure}, a Jordan curve with positive measure inscribes a $\theta$-rectangle for any $\theta \in (0,\pi)$.
    Thus, the rectangular peg problem for any Jordan curve would be solved affirmatively if the cohomology vanishing in \cref{rem:suff_cond_muhom} for Jordan curves with measure zero.
\end{remark}

\subsection{Rectifiable curves}

Now we give an affirmative answer to the rectangle peg problem for rectifiable curves.

\begin{proposition}
    A rectifiable Jordan curve $\cur$ satisfies the assumptions in \cref{thm:main_intro}. 
\end{proposition}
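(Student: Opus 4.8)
The plan is to parametrize $\cur$ by the boundary values of a Riemann map of the enclosed domain and to approximate it by the restrictions of that Riemann map to slightly smaller circles; these restrictions are automatically smooth embedded curves, so the only real work is to control their primitives.

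First, the measure-zero condition is elementary: if $\cur$ has finite length $L$, then for every $N$ it is the union of $N$ subarcs of length $L/N$, each of diameter $\le L/N$, hence contained in a disk of that radius; therefore $\mathrm{Leb}_2(\cur) \le N\cdot \pi (L/N)^2 = \pi L^2/N \to 0$. Next, let $D$ be the Jordan domain bounded by $\cur$ and let $\psi \colon \bD_1 \to D$ be a biholomorphism; by the Riemann mapping theorem and Carath\'eodory's theorem it extends to a homeomorphism $\overline{\bD_1} \to \overline{D}$. Set $c(\theta) \coloneqq \psi(e^{i\theta})$. Since inscribing a $\theta$-rectangle depends only on the image $\cur$, it suffices to verify the hypotheses of \cref{thm:main_intro} for this parametrization $c$. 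For a sequence $r_n \uparrow 1$, put $c_n(\theta) \coloneqq \psi(r_n e^{i\theta})$; as $\psi$ is injective and holomorphic with non-vanishing derivative on $\bD_1$, each $c_n$ is a real-analytic embedded Jordan curve, and uniform continuity of $\psi$ on the compact set $\overline{\bD_1}$ gives $c_n \to c$ in the $C^0$-sense.

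It remains to show that the primitives converge. Writing $\psi(re^{is}) = (X_r(s), \Xi_r(s))$ in the coordinates $(x,\xi)$ on $\bR^2 \simeq T^*\bR_x$, the primitive of $(c_n \circ e)^*\lambda$ is $f_n(\theta) = \int_0^\theta \Xi_{r_n}(s)\, X_{r_n}'(s)\, ds$, and $X_r'(s)$ is one of the two real components of $\partial_s \psi(re^{is}) = i r e^{is}\psi'(re^{is})$. The crucial classical input is that rectifiability of $\partial D = \cur$ forces $\psi' \in H^1(\bD_1)$, the Hardy space (the F.\ and M.\ Riesz theorem on conformal maps onto rectifiable Jordan domains). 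Consequently $w \mapsto w\psi'(w)$ also lies in $H^1(\bD_1)$, so its circular restrictions converge in $L^1(\bR/2\pi\bZ)$ as the radius tends to $1$; taking components, $(X_{r_n}'(\cdot))_n$ is Cauchy in $L^1$, hence $L^1$-bounded, while $(\Xi_{r_n}(\cdot))_n$ is uniformly Cauchy and uniformly bounded since $\psi$ is continuous on the compact set $\overline{\bD_1}$. Splitting $\Xi_{r_n} X_{r_n}' - \Xi_{r_m} X_{r_m}' = (\Xi_{r_n}-\Xi_{r_m})X_{r_n}' + \Xi_{r_m}(X_{r_n}'-X_{r_m}')$ and estimating each term by H\"older's inequality, we obtain $\sup_{\theta \in [0,2\pi]} |f_n(\theta) - f_m(\theta)| \to 0$; since $f_n(\theta+2\pi) = f_n(\theta) - \pi r_n^2$ with $\pi r_n^2 \to \pi$, this upgrades to uniform convergence of $(f_n)_n$ on every compact subset of $\bR$ to some continuous $f$. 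Together with the measure-zero statement and the hypothesis $A(\cur) = \pi$, this verifies all the hypotheses of \cref{thm:main_intro}.

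The main obstacle is this last step, specifically the boundary behaviour of the conformal map: one must know that rectifiability gives $\psi' \in H^1$, so that the ``primitive direction'' is controlled in $L^1$ rather than merely in measure. Passing through the conformal model also has the advantage that the approximants $c_n$ are embedded for free; a more naive approach, such as mollifying an arc-length parametrization, would require an additional and somewhat delicate argument to remove spurious self-intersections while keeping $C^0$- and $L^1$-control.
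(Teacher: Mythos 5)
Your argument is correct and is essentially the paper's own proof: both pass to the Riemann map with its Carath\'eodory extension, approximate $\cur$ by the images of the circles of radius $r_n \to 1$, and rest on the same classical input for rectifiable boundaries (Riesz--Privalov, i.e.\ $\psi' \in H^1$); you merely verify condition (2) directly via $L^1$-convergence of the circular restrictions of $w\psi'(w) \in H^1(\bD_1)$ where the paper instead cites Apostol's lemmas toward Green's theorem for rectifiable curves. One harmless slip: the period defect $f_n(\theta+2\pi)-f_n(\theta)$ is $\pm\mathrm{Area}(\psi(\bD_{r_n}))$, not $-\pi r_n^2$, but since this constant equals $f_n(2\pi)-f_n(0)$ it converges as $n \to \infty$ anyway, so your passage from uniform convergence on $[0,2\pi]$ to uniform convergence on compact subsets of $\bR$ is unaffected.
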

\begin{proof}
    Let $D$ be the open domain bounded by $\cur$.
    By the Riemann mapping theorem and the Carath\'eodory theorem, we can construct a homeomorphism $\overline{\varphi} \colon \overline{\bD_1} \to \overline{D}$ whose restriction to $\bD_1$ is a holomorphic mapping. 
    For $n \in \bZ_{\ge 2}$, we define a smooth Jordan curve $c_n \coloneqq \overline{\varphi}|_{\partial \bD_{1-1/n}}$. 
    By the Riesz--Privalov theorem, a precise form of the Riemann mapping theorem for a domain with rectifiable boundary~\cite[Thm.~6.8]{Pommerenke92}, we find that the lengths of $c_n$ converge to the length of $c$.
    Then, by the lemmas for proving Green's theorem for rectifiable curves~\cite[{{10--14}}]{Apostol57}\footnote{Note that this discussion is only written in the first edition and has been removed from the second edition onward. An overview of the discussion can also be found on Wikipedia~\cite{Wiki_Green}.}, we find that the sequence of smooth Jordan curve $(c_n)_n$ satisfies the conditions in \cref{thm:main_intro}.
\end{proof}

\begin{corollary}\label{cor:rectifiable_peg}
    Every rectifiable Jordan curve inscribes a $\theta$-rectangle for any $\theta \in (0,\pi)$.
\end{corollary}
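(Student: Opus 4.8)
The plan is to deduce the corollary directly from \cref{thm:main_intro} and the proposition just proved. First I would handle the normalization. Given an arbitrary rectifiable Jordan curve $\cur$, the bounded open domain it encloses has finite positive area $A(\cur)$, so rescaling $\cur$ about the origin by the factor $\sqrt{\pi/A(\cur)}$ yields a rectifiable Jordan curve enclosing area exactly $\pi$. A Euclidean dilation carries an inscribed $\theta$-rectangle of the rescaled curve to an inscribed $\theta$-rectangle of $\cur$, since the angle $\theta$ between the diagonals is invariant under dilations; hence it suffices to treat curves whose enclosed area is $\pi$.

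Next I would invoke the proposition established immediately above, which says that a rectifiable Jordan curve enclosing a domain of area $\pi$ satisfies both hypotheses of \cref{thm:main_intro}: its image has Lebesgue measure zero, and it admits a sequence of smooth Jordan curves $(c_n)$ converging to it in the $C^0$-sense such that the primitives $f_n$ of $(c_n \circ e)^*\lambda$ converge to a continuous function uniformly on every compact subset. Granting this, \cref{thm:main_intro} applies verbatim and produces an inscribed $\theta$-rectangle for every $\theta \in (0,\pi)$, which is exactly the assertion of the corollary.

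At the level of this corollary there is no genuine obstacle, since all of the substance resides in \cref{thm:main_intro} and in the preceding proposition. The one delicate point is the verification of hypothesis (2): one must check that the explicit approximating sequence obtained from the Carath\'eodory extension of the Riemann map, with radii tending to $1$, forces the primitives to converge locally uniformly. That verification rests on the Riesz--Privalov theorem (convergence of lengths for domains with rectifiable boundary) and on the classical lemmas underlying Green's theorem for rectifiable curves, and it is precisely what the proposition carries out. Finally, I would note that the companion statement \cref{cor:intro_locallymonotone} follows by the same two-step scheme, with the proposition replaced by the analogous verification for locally monotone curves.
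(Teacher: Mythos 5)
Your proposal is correct and matches the paper's route exactly: normalize the enclosed area to $\pi$ by a dilation (which preserves inscribed $\theta$-rectangles), then apply the preceding proposition verifying the hypotheses of \cref{thm:main_intro} for rectifiable curves, and conclude by \cref{thm:main_intro}. This is precisely how the paper derives \cref{cor:rectifiable_peg}, with the analytic substance residing in the proposition and the main theorem.
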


\subsection{Locally monotone curves}

Stromquist~\cite{Stromquist} proved the existence of an inscribed square for a large class of Jordan curves, which he called locally monotone. 
We will also extend his result with the use of \cref{thm:main_intro}. 

Let us first recall the definition of locally monotone curves. 
Through the identification $S^1 \simeq \bR/2\pi \bZ$, we regard a Jordan curve $c \colon S^1 \to \bR^2$ as a $2\pi$-periodic map $c \colon \bR \to \bR^2$. 

\begin{definition}[{\cite[\S6]{Stromquist}}]\label{def:locally_monotone}
    A Jordan curve $c\colon S^1\to \bR^2$ is said to be \emph{locally monotone} if for any $p\in \bR$, there exist an open connected neighborhood $U_p \subset \bR$ of $p$ and a unit vector $\vec{v}(p)$ such that the inner product $q \mapsto c(q) \cdot \vec{v}(p)$ is a strictly monotone function on $U_p$.
\end{definition}

\begin{proposition}
    A locally monotone Jordan curve $\cur$ satisfies the assumptions in \cref{thm:main_intro}. 
\end{proposition}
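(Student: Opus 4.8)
The plan is to verify, for a locally monotone Jordan curve $c$ bounding a domain of area $\pi$, the three hypotheses of \cref{thm:main_intro}: that $\cur$ has measure zero, that $c$ is a $C^0$-limit of smooth Jordan curves, and that the associated primitives converge uniformly on compacts. The first hypothesis follows at once from the local structure. Viewing $c$ as a $2\pi$-periodic map and applying \cref{def:locally_monotone} together with compactness of $S^1$, I would cover $S^1$ by finitely many open arcs $A_1,\dots,A_k$ on each of which $q\mapsto c(q)\cdot\vec v_j$ is strictly monotone for some unit vector $\vec v_j$; rotating coordinates so that the $u$-axis points along $\vec v_j$ exhibits $c(A_j)$ as the graph $\{(u,\psi_j(u))\}$ of a continuous function $\psi_j$ over an interval. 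Since the graph of a continuous function is Lebesgue-null in $\bR^2$, $\cur$ is a finite union of null sets, hence null.

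For the approximation I would take the periodic mollification $c_n\coloneqq c*\rho_{1/n}$ with a standard mollifier $\rho_{1/n}$, which is smooth and converges to $c$ uniformly. It remains locally monotone: strict monotonicity of $g_j\coloneqq c\cdot\vec v_j$ on $A_j$ means its distributional derivative is a measure of constant sign with full support, so $(g_j*\rho_{1/n})'=g_j'*\rho_{1/n}$ is nowhere zero and of constant sign on the arc obtained by shrinking $A_j$ by $1/n$, and for $n$ large these shrunken arcs still cover $S^1$ with a uniform Lebesgue number. In particular each $c_n$ is an immersion and locally injective; combined with uniform convergence and the separation estimate $\inf\{|c(s)-c(s')| : d_{S^1}(s,s')\ge\varepsilon_0\}>0$ coming from injectivity of $c$, one concludes that $c_n$ is a smooth \emph{Jordan} curve for $n$ large. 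A winding-number argument based on $|\cur|=0$ gives $A(C_n)\to A(C)=\pi$, so after rescaling $c_n$ by the bounded factor $\sqrt{\pi/A(C_n)}\to1$ we may assume $A(C_n)=\pi$; this establishes condition (1) of \cref{thm:main_intro}.

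For condition (2), fix a parameter partition $0=t_0<\dots<t_m=2\pi$ refining the cover so that, for all large $n$, both $c$ and every $c_n$ are monotone in a single direction $\vec v_{j(i)}$ on each $[t_{i-1},t_i]$. On such a sub-arc, $f_n(s)-f_n(t_{i-1})=\int_{c_n|_{[t_{i-1},s]}}\xi\,dx$ is the signed area below the corresponding graph $w=\psi_{i,n}(u)$ in the rotated coordinates; an elementary integration by parts trading $\psi_{i,n}'$ for $\psi_{i,n}$ rewrites it as a boundary term depending continuously on $c_n(t_{i-1})$ and $c_n(s)$, plus an integral of the form $\int(u\sin\alpha_i\cos\alpha_i+\psi_{i,n}(u))\,du$ over the $u$-range of the sub-arc. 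Since $c_n\to c$ uniformly forces $\psi_{i,n}\to\psi_i$ uniformly and the endpoints and $u$-ranges to converge, $(f_n)_n$ converges uniformly on $[0,2\pi]$; by the quasi-periodicity $f_n(\,\cdot\,+2\pi)=f_n+(\pm A(C_n))$ with $A(C_n)$ bounded, it converges uniformly on every compact subset of $\bR$ to a continuous function $f$. Hence all hypotheses of \cref{thm:main_intro} hold and the proposition follows.

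The step I expect to be the main obstacle is the last one: the sub-arc reparametrizations $s\mapsto c_n(s)\cdot\vec v_{j(i)}$ depend on $n$, so one must extract \emph{uniform}, not merely pointwise, convergence of $f_n$, which is subtle because no control on $c_n'$ is available. The integration-by-parts identity is precisely what makes this work, since it reduces the integral over each monotone sub-arc to sup-norm-continuous functionals of $\psi_{i,n}$ and of the endpoints. A secondary technical point, handled by the local-monotonicity-plus-separation argument above, is confirming that the mollified curves are genuinely embedded.
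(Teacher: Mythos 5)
Your proposal is correct and follows essentially the same route as the paper: mollify $c$, use local monotonicity to describe each sub-arc as a graph in rotated coordinates, write the primitive over such a sub-arc as $\int \psi(u)\,du$ plus boundary corrections (your integration-by-parts terms play exactly the role of the paper's exact form $h_p$ with $dh_p=\xi\,dx-\xi_p\,dx_p$), and deduce uniform convergence of the primitives from $C^0$-convergence of the reparametrized graph functions $\psi_{i,n}\to\psi_i$. The points you treat in more detail (measure zero via local graphs, embeddedness of the mollified curves) are asserted implicitly in the paper, the area rescaling you perform is actually deferred there to the proof of \cref{thm:main_intro}, and your direct proof that $(f_n)_n$ is uniformly convergent replaces the paper's explicit construction of the limit $f$ by gluing local formulas --- a cosmetic rather than substantive difference.
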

\begin{proof}
    Let $p \in \bR$ and define $g_p(q) \coloneqq c(q) \cdot \vec{v}(p)$, a strictly monotone function on $U_p$.
    We define a function $f_p$ on $U_p$ as follows:
    \[
        f_p(q) \coloneqq \int_{g_p(p)}^{g_p(q)} c(g_p^{-1}(q')) \cdot \vec{n}(p) \, dq' + h_p(c(q)) \quad (q \in U_p),
    \]
    where 
    \begin{itemize}
        \item $\vec{n}(p)$ is a unit vector orthogonal to $\vec{v}(p)$ such that $(\vec{v}(p), \vec{n}(p))$ forms an oriented basis of $\bR^2$;
        \item $(x_p, \xi_p)$ is the coordinate function with respect to the orthonormal basis $(\vec{v}(p),\vec{n}(p))$; and
        \item $h_p \colon \bR^2 \to \bR$ is a smooth primitive function of $\xi dx-\xi_pdx_p$.
    \end{itemize}
    After choosing appropriate constant shifts, we can glue the family of local functions $(f_p \colon U_p \to \bR)_{p \in \bR}$ to get a continuous function $f$ on $\bR$. 
    Note that a smooth Jordan curve $c$ is locally monotone, and in this case $f$ constructed above is a primitive function of $c^*\lambda=c^*(\xi dx)$. 

    We fix a non-negative smooth function $\chi\in C^\infty (\bR)$ supported on $[-1,1]$ such that $\int_\bR \chi(q) \, dq=1$.  
    For $n \in \bZ_{\ge 1}$, we take $\delta_n>0$ such that $|p-p'|<\delta_n$ implies $\|c(p)-c(p')\|<1/n$ and define 
    \[
        c_n(p) \coloneqq \int_{\bR} \delta_n^{-1}\chi (\delta_n^{-1}u) \, c(p-u)du
    \]
    for $p \in \bR$. 
    Then $c_n$ satisfies $\|c(p)-c_n(p)\| <1/n$ for any $p \in \bR$ and is a smooth Jordan curve for a sufficiently large $n$. 
    In particular, the sequence $(c_n)_n$ converges to $c$ in the $C^0$-sense.
    
    We can check from argument in Stromquist~\cite{Stromquist} that the sequence of primitives for $c_n$'s converges to $f$. 
    Indeed, by shrinking $U_p$ if necessary, $g_{n,p}(q)\coloneqq c_n(q)\cdot \vec{v}(p)$ is strictly monotone on $U_p$ and the functions $c_n(g_{n,p}^{-1}(\mathchar`-)) \cdot \vec{n}(p)$ defined on a neighborhood of $g_p(p)$ converge to $c(g_p^{-1}(\mathchar`-))\cdot \vec{n}(p)$ in the $C^0$-sense. 
\end{proof}

\begin{corollary}\label{cor:locallymonotone_peg}
    Every locally monotone Jordan curve inscribes a $\theta$-rectangle for any $\theta \in (0,\pi)$.
\end{corollary}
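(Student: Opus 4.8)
The plan is to deduce the corollary directly from \cref{thm:main_intro} together with the proposition just established, which asserts that a locally monotone Jordan curve bounding a domain of area $\pi$ satisfies the hypotheses of \cref{thm:main_intro}. The only point that requires separate attention is the normalization of the area.

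First I would record that the property ``$\cur$ inscribes a $\theta$-rectangle'' is invariant under homotheties of $\bR^2$: for $\lambda>0$, the map $z\mapsto \lambda z$ carries a $\theta$-rectangle inscribed in $\cur$ to a $\theta$-rectangle inscribed in $\lambda\cdot\cur$, since the angle between the diagonals of a rectangle is unchanged by scaling; moreover local monotonicity is preserved, as the unit vectors $\vec v(p)$ of \cref{def:locally_monotone} work verbatim for the rescaled curve. Hence, given an arbitrary locally monotone Jordan curve $c$, if $\Theta$ denotes the area of the domain it bounds, I would replace $c$ by $\sqrt{\pi/\Theta}\,c$; it then suffices to inscribe a $\theta$-rectangle in this rescaled curve.

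Next I would apply the preceding proposition to the rescaled curve to conclude that it satisfies conditions~(1) and~(2) of \cref{thm:main_intro}, together with the measure-zero hypothesis (which for a locally monotone curve holds because near each point the curve is a Lipschitz graph over a segment of the line $\bR\vec v(p)$, hence locally rectifiable, hence of measure zero). Finally, \cref{thm:main_intro} produces an inscribed $\theta$-rectangle for every $\theta\in(0,\pi)$, and undoing the homothety yields one in the original curve.

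\textbf{Main obstacle.} At this level there is essentially no obstacle: all of the analytic content---constructing the mollified approximations $c_n$, checking that they are eventually smooth Jordan curves converging to $c$ in $C^0$, and verifying that the primitives $f_n$ converge uniformly on compacts (which uses Stromquist's local monotonicity to control the inverses $g_{n,p}^{-1}$ on each $U_p$)---is packaged into the preceding proposition, and the reduction to Lagrangian intersection via sheaf quantization is packaged into \cref{thm:main_intro} (ultimately \cref{thm:main_sheaves}). The only thing to be careful about is that the homothety does not disturb the hypotheses, which is immediate.
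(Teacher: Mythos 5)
Your proposal follows the same route as the paper: the corollary is an immediate consequence of the preceding proposition together with \cref{thm:main_intro}, after the harmless normalization of the enclosed area by a homothety (which indeed preserves both local monotonicity and the property of inscribing a $\theta$-rectangle).

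One sub-claim in your argument is wrong as stated, though its conclusion is correct. Local monotonicity does \emph{not} make the curve locally a Lipschitz graph over the line $\bR\vec v(p)$, nor locally rectifiable: on $U_p$ the projection $q\mapsto c(q)\cdot\vec v(p)$ is merely a strictly monotone continuous function, so the curve is locally the graph of a \emph{continuous} function in the $(\vec v(p),\vec n(p))$-coordinates, and such graphs can have infinite length (Stromquist's class genuinely contains non-rectifiable curves; otherwise this corollary would be subsumed by \cref{cor:intro_rectifiable}). The measure-zero hypothesis of \cref{thm:main_intro} still holds, but for the weaker reason that the graph of a continuous function over an interval has planar Lebesgue measure zero, and $\cur$ is covered by finitely many such graphs by compactness. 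With that correction your reduction is complete and matches the paper's.
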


\section*{Acknowledgments}

During the preparation of this paper, we learned that St\'{e}phane Guillermou had proved a result similar to \cref{thm:main_intro}.
We are grateful to him for generously sharing his insights with us. 
His ideas have clarified our discussions and improved our results.
We thank Tatsuya Miura for letting us know the square peg problem many years ago and the helpful discussions about Jordan curves. 
We also thank Vincent Humili\`ere for discussions about Jordan curves, Kaoru Ono for helpful comments related to \cref{rem:unobstructed}, and Takuya Murayama for some references about conformal mappings. 
We are grateful to Joshua Evan Greene and Andrew Lobb for pointing out an error in the earlier version.
TA is partially supported by JSPS KAKENHI Grant Number JP24K16920. 
YI is partially supported by JSPS KAKENHI Grant Numbers JP21K13801 and JP22H05107.
We are partially supported by JST, CREST Grant Number JPMJCR24Q1, Japan.

\printbibliography

\noindent Tomohiro Asano: 
Department of Mathematics, Kyoto University, Kitashirakawa-Oiwake-Cho, Sakyo-ku, 606-8502, Kyoto, Japan.

\noindent \textit{E-mail address}: \texttt{tasano[at]math.kyoto-u.ac.jp}, \texttt{tomoh.asano[at]gmail.com}

\medskip

\noindent Yuichi Ike:
Graduate School of Mathematical Sciences, The University of Tokyo, 3-8-1 Komaba Meguro-ku Tokyo 153-8914, Japan.

\noindent
\textit{E-mail address}: \texttt{ike[at]ms.u-tokyo.ac.jp}, \texttt{yuichi.ike.1990[at]gmail.com}

\end{document}